\numberwithin{equation}{section}
\newtheorem{thm}{Theorem}[section]
\newtheorem{cor}[thm]{Corollary}
\newtheorem{prop}[thm]{Proposition}
\newtheorem{lem}[thm]{Lemma}
\theoremstyle{definition}
\theoremstyle{remark}
\newtheorem{rmk}[thm]{Remark}
\newcommand{\co}{\colon\thinspace}
\newcommand{\mb}[1]{\mathbb{#1}}
\newcommand{\Mod}{\mbox{-}{\rm mod}}
\DeclareMathOperator{\Ext}{Ext}
\DeclareMathOperator{\Hom}{Hom}
\DeclareMathOperator{\Map}{Map}
\DeclareMathOperator{\Sq}{Sq}
\DeclareMathOperator{\Der}{Der}
\DeclareMathOperator{\im}{Im}
\DeclareMathOperator{\TAQ}{TAQ}
\DeclareMathOperator*{\sma}{\wedge}
\title{Calculating obstruction groups for $E_\infty$ ring spectra}
\author{Tyler Lawson\thanks{The author was partially supported by NSF
    grant 1610408.}}
\begin{document}
\maketitle

\begin{abstract}
  We describe a special instance of the Goerss--Hopkins obstruction
  theory, due to Senger, for calculating the moduli of $E_\infty$ ring
  spectra with given mod-$p$ homology. In particular, for the
  $2$-primary Brown--Peterson spectrum we give a chain complex that
  calculates the first obstruction groups, locate the first potential
  genuine obstructions, and discuss how some of the obstruction
  classes can be interpreted in terms of secondary operations.
\end{abstract}

\section{Introduction}

The mod-$p$ homology of an $E_\infty$ ring spectrum $R$ comes equipped
with operations called the (Araki--Kudo--)Dyer--Lashof
operations. At the prime $2$, these take the form of additive natural
transformations
\[
  Q^s\co H_n(R) \to H_{n+s}(R)
\]
that satisfy a Cartan formula, have their own Adem relations, and
interact in a concrete way with homology operations via the Nishida
relations. These operations give extra structure which can be used to
classify existing $E_\infty$ rings and exclude certain phenomena. For
example, Hu--Kriz--May used relations between these operations in the
dual Steenrod algebra to show that the natural splitting $BP \to
MU_{(p)}$ cannot be a map of $E_\infty$ ring spectra.

Unfortunately, these primary operations alone are not enough
information to determine whether the Brown--Peterson spectrum $BP$
admits the structure of an $E_\infty$ ring, a long-standing problem in
the field. However, more is available in $H_* R$: secondary operations
that arise from relations between primary operations. For example, the
Adem relation $Q^{2n+1} Q^n = 0$ gives rise to a secondary operation
that increases degree by $3n+2$, but it is only defined on $\ker(Q^n)$
and its value is only well-defined mod the image of $Q^{2n+1}$. This
extra data provides more information that can enhance our
understanding of the objects that we have and the objects that we
cannot.

In \cite{secondary}, we used the calculation of a secondary operation
in the dual Steenrod algebra to show that the $2$-primary
Brown--Peterson spectrum $BP$ does not admit the structure of an
$E_\infty$ ring. Just as above, this secondary operation exists
because of a relation between primary operations. Unfortunately, the
relation in question is much more complicated than $Q^{2n+1} Q^n = 0$.

\begin{prop}
  \label{prop:bigrelation}
  Suppose that $A$ is an $E_\infty$-algebra over the Eilenberg--Mac
  Lane spectrum $H\mb F_2$ and $x \in \pi_2(A)$. Define the following classes:
  \begin{align*}
    y_5 &= Q^3 x\\
    y_7 &= Q^5 x\\
    y_9 &= Q^7 x\\
    y_{13} &= Q^{11} x\\
    y_8 &= Q^6 x + x^4\\
    y_{10} &= Q^8 x + x^2 Q^4 x\\
    y_{12} &= Q^{10}x + (Q^4 x)^2
  \end{align*}
  Then there is a relation
  \begin{align*}
    0 ={}& Q^{20} y_{10} + Q^{18} y_{12} + Q^{17} y_{13}  + x^4(Q^{12}
    y_{10}) + y_9^2 (Q^4 x)^2 +\\
    &y_7^2 Q^9Q^5 x + y_8^2 Q^8 Q^4 x + (Q^9 y_9) (Q^4x)^2 + (Q^{10} y_8) (Q^4 x)^2 + \\
    &y_5^2 (Q^{11} Q^7 x + Q^{10}Q^8 x + x^4 Q^6 Q^4 x)
  \end{align*}
  in $\pi_{30}(A)$.

  In particular, if $R$ is an $E_\infty$ ring then
  $A = H\mb F_2 \sma R$ is an $E_\infty$-algebra over $H\mb F_2$, and
  for $x \in H_2(R)$ this gives a relation in $H_{30} R$ between
  Dyer--Lashof operations on $x$.
\end{prop}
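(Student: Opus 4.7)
The plan is to reduce the verification to the universal case. Any $E_\infty$-$H\mb F_2$-algebra $A$ equipped with a selected class $x \in \pi_2(A)$ is classified by a map of $E_\infty$-$H\mb F_2$-algebras $F \to A$, where $F$ denotes the free such algebra on a single degree-$2$ generator; thus it suffices to prove the relation inside $\pi_{30}(F)$. The homotopy of $F$ is classical: by the computations of Cohen--Lada--May applied with $H\mb F_2$-coefficients, $\pi_* F$ is a polynomial $\mb F_2$-algebra on the admissible Dyer--Lashof monomials $Q^I x$ of positive excess. In particular $\pi_{30}(F)$ is a finite-dimensional $\mb F_2$-vector space with an explicit monomial basis, so the claim becomes a concrete finite linear-algebra identity.

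The next step is to expand each summand of the claimed relation in this basis. The auxiliary classes $y_5, y_7, y_8, y_9, y_{10}, y_{12}, y_{13}$ are not arbitrary; each $y_i$ is a Dyer--Lashof operation on $x$ corrected by polynomial terms of lower filtration, chosen so that $Q^{|y_i|} y_i = y_i^2$ holds on the nose and so that the secondary operations detecting the relevant Adem relations organize compatibly. Using the Cartan formula to distribute $Q^s$ across products, the Adem relations to rewrite inadmissible compositions, and $Q^{|z|} z = z^2$ whenever the degree matches, each summand of the claimed relation reduces to an $\mb F_2$-linear combination of admissible monomials in $x$.

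The main obstacle is the combinatorial bookkeeping. The degree-$30$ part of $F$ receives contributions from length-four admissible compositions $Q^{a_1}Q^{a_2}Q^{a_3}Q^{a_4} x$, from products of shorter admissibles, and from pure powers such as $x^{15}$; each $Q^I y_j$ appearing in the relation expands into many admissible terms, and the claim is that after all cancellations the total is zero. This expansion was carried out in \cite{secondary}, and the present proposition simply records the resulting identity so that it can frame the obstruction-theoretic discussion below. The final sentence of the proposition is then immediate: for any $E_\infty$ ring spectrum $R$, the smash product $H\mb F_2 \sma R$ is an $E_\infty$-$H\mb F_2$-algebra with $\pi_* = H_* R$, so any $x \in H_2 R$ produces an instance of the universal relation.
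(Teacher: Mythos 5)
Your reduction to the universal example is the right frame, and it is the only sensible one: a class $x\in\pi_2(A)$ is classified by a map from the free $E_\infty$-$H\mb F_2$-algebra $F$ on a degree-$2$ generator, the operations and products are natural, and $\pi_* F$ is the free allowable $R$-algebra on $x$, so the claim is a finite identity among admissible monomials in degree $30$. Note, though, that the paper itself offers no proof of this proposition at all: it is recalled from \cite{secondary}, where the identity is actually established. So your proposal and the paper end up in the same place -- citing the earlier computation -- and your added value is the (correct) explanation of why a finite check in the free algebra suffices.

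The genuine gap is that, as a standalone proof, your write-up never proves the statement: the entire content of the proposition \emph{is} the cancellation in degree $30$, and your third paragraph replaces that verification with ``this expansion was carried out in \cite{secondary}.'' If the intent is to give a proof rather than a pointer, you must actually expand $Q^{20}y_{10}$, $Q^{18}y_{12}$, $Q^{17}y_{13}$, $x^4 Q^{12}y_{10}$, the squared terms, etc., via the Cartan formula, rewrite inadmissibles by the Adem relations, use $Q^s x=0$ for $s<2$ and $Q^2x=x^2$, and exhibit the cancellation (or at least explain how the check was mechanized). Two smaller inaccuracies: the polynomial generators of $\pi_*F$ are not the admissible $Q^I x$ of \emph{positive} excess but those of excess exceeding $|x|=2$ (together with $x$); admissible monomials of excess $<2$ vanish and those of excess $2$ are decomposable squares. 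And your claim that the corrections in $y_8,y_{10},y_{12}$ are ``chosen so that $Q^{|y_i|}y_i=y_i^2$ holds on the nose'' is vacuous, since $Q^{|z|}z=z^2$ holds for every class; the corrections are there because these particular combinations are the elements that vanish in $H_*BP$ when $x=\xi_1^2$, which is what makes the associated secondary operation defined there. Neither slip affects the validity of the reduction, but the missing computation does mean the proposal is an outline plus a citation rather than a proof.
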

This relation gives rise to a secondary operation defined on a subset
of $H_2(R)$ (those elements $x$ for which the elements $y_k$ described
above all vanish) that takes values in a quotient of $H_{31}(R)$. A
calculation of this secondary operation found that its value on the
generator $\xi_1^2 \in H_2 H\mb F_2$ is unambiguously $\xi_5$ mod
decomposables, which makes it impossible for the map
$H_* BP \to H_* H\mb F_2$ to preserve secondary operations.

While this gives a rough description of the main result of
\cite{secondary}, and the size of this relation provides some mild
amusement value, most people who see this relation are inclined to ask
\emph{where on earth it came from}. There are no obvious indications
why we should focus attention on this particular secondary
operation. There are certainly many simpler ones---including ones
based solely on the Adem relations---that can be applied to elements
in $H_* BP$, and we could attempt to obtain simpler obstructions using
those. However, the author's experience has been that most
more straightforward attempts either fail to work or are difficult to
calculate. Most of the blame for this can be directed at $MU$: the
subring $H_* BP$ of the dual Steenrod algebra is the same as the image
of $H_* MU$. If our goal is to show that $H_* BP$ cannot be closed
under secondary operations, then we have to find a secondary operation
built out of a relation that holds in $H_* BP$ \emph{but not in
  $H_* MU$}. The first such relation appears in the list above: it is
the relation $0 = Q^8(\xi_1^2) + \xi_1^4 Q^4(\xi_1^2)$ that proves
Hu--Kriz--May's nonsplitting result at the prime $2$. The nonlinear
nature of this relation forces much of the rest of the mess.

The goal of this paper is to describe the obstruction-theoretic method
we used, built to find secondary operations that might genuinely
produce a problem. Our method is based on an attempt to repair Kriz's
paper \cite{kriz-towers}, which circulated quietly in preprint form
but was not published: it assumed an interaction between Steenrod
operations and certain operations in topological Andr\'e--Quillen
cohomology that could not be verified. Trying at length to understand
the exact interaction between these operations, and how the
obstruction theory could be enhanced to one that took this interaction
into account, led to the direction we discuss in this paper. It is
difficult to emphasize adequately how much debt we owe to Kriz's work.

\subsection*{Acknowledgements}

We are thankful to Matthew Ando, Jacob Lurie, Catherine Ray, Birgit
Richter, Andrew Senger, and Dylan Wilson for conversations related to
this work, and in particular for the motivation to expose the
machinery that we've used. 

\section{Postnikov-based obstructions to commutativity}

For context, we will begin by discussing a revisionist version of
Kriz's technique based on Postnikov towers
\cite{kriz-towers,basterra-andrequillen}.

In rough, for a commutative (or $E_\infty$) ring spectrum $R$ and an $R$-module $M$,
Basterra constructed topological Andr\'e--Quillen cohomology groups
\[
  \TAQ^*(R,M),
\]
by analogy with the Andr\'e--Quillen cohomology groups of ordinary
commutative rings \cite{quillen-ring-cohomology}. There is a
restriction map
\[
  \TAQ^*(R,M) \to [R,\Sigma^* M]
\]
from $\TAQ$-cohomology with coefficients in $M$ to ordinary cohomology
with coefficients in $M$. Just as a connective spectrum $X$ has a
Postnikov tower $\{\dots \to P_2 X \to P_1 X \to P_0 X\}$ of spectra which
is determined by $k$-invariants
\[
  k_n \in [P_n X, \Sigma^{n+2} H\pi_{n+1} X],
\]
a connective ring spectrum $R$ has a Postnikov tower $\{\dots \to P_2 R
\to P_1 R \to P_0 R\}$ of commutative ring spectra which is determined
by $k$-invariants
\[
  \tilde k_n \in \TAQ^{n+2}(P_n R, H\pi_{n+1} R)
\]
that restrict to the $k$-invariants of the underlying spectrum. Thus,
if we have the ability to show that all the $k$-invariants of a
homotopy ring spectrum $R$ lift from cohomology to $\TAQ$-cohomology,
we can lift $R$ from a spectrum to an $E_\infty$ ring spectrum.

Of course, this method is not particularly useful if we can't
calculate anything about $\TAQ$. Kriz also developed a spectral
sequence for calculating $\TAQ$-cohomology, in particular with
coefficients in $\mb F_p$, based on the Miller spectral sequence
\cite{miller-delooping}. Kriz's spectral sequence can be interpreted
as being of the form
\[
  \Der^s_{\mathcal{A}}(H_* R, \Omega^t \mb F_p) \Rightarrow
  \TAQ^{s-t}(R,H\mb F_p),
\]
where the groups $\Der^s_\mathcal{A}$ are Andr\'e--Quillen cohomology
groups in a certain category $\mathcal{A}$ of simplicial
graded-commutative rings equipped with Dyer--Lashof operations. From
knowledge of $\TAQ^{*}(R, H\mb F_p)$, one can deduce $\TAQ^{*}(R,
HM)$ for a $\pi_0(R)$-module $M$ using Bockstein spectral sequences.

One of the main difficulties with the Postnikov-based technique is
essentially the same as the difficulty that occurs when using Serre's
Postnikov-based technique for computing homotopy groups of spheres. To
use the Postnikov technique, you use the coaction of the dual Steenrod
algebra $A_*$ on $H_* R$ to compute the homotopy groups $\pi_* R$, and
then use the Dyer--Lashof operations on $H_* R$ to compute the
$\TAQ$-cohomology groups $\TAQ^*(R, \pi_* R)$ where $k$-invariants
live. This division into homotopy and homology uses knowledge of the
Steenrod coaction and Dyer--Lashof action on the homology groups
separately, but it does not track the fact that there are strong and
concrete relationships between them.

\section{Background: Goerss--Hopkins obstruction theory}

In \cite{goerss-hopkins-moduliproblems}, Goerss and Hopkins developed
a very general obstruction theory for the construction of algebras over
operads. In this section, we will give a brief discussion of their
method.

The Goerss--Hopkins obstruction theory requires two main ingredients.
\begin{itemize}
\item We need a multiplicative homology theory $E_*$, represented by a
  ring spectrum $E$.
\item We need a simplicial operad $\mathcal{O}_\bullet$.
\end{itemize}
These are required to satisfy two main constraints: an Adams--Atiyah
condition \cite[1.4.1]{goerss-hopkins-moduliproblems}, and a
``homotopically adapted'' condition
\cite[1.4.16]{goerss-hopkins-moduliproblems}. Some of the main
consequences are the following.
\begin{itemize}
\item The ring $E_* E$ is flat over $E_*$. This ensures that the pair
  $(E_*, E_* E)$ form a Hopf algebroid and that $E_* X$ naturally
  takes values in its category of comodules.
\item There is a large library of ``basic cells'' $\{X_\alpha\}$ with
  $X_\alpha$ finite and $E_* X_\alpha$ projective over $E_*$. This is
  large enough so that for any spectrum $Y$, there are enough maps
  $X_\alpha \to Y$ so that the images of the maps
  $E_* X_\alpha \to E_* Y$ are jointly surjective.
\item If we take a simplicial $\mathcal{O}_\bullet$-algebra and apply
  $E$-homology, the resulting simplicial $E_*E$-comodule lands in some
  algebraic category $\mathcal{A}$ of simplicial $E_*E$-comodules with
  extra structure.\footnote{Namely, $\mathcal{A}$ is a category of
    algebras over some simplicial monad.}
\item If we take a set of basic cells $X_\alpha$ and form the free
  simplicial $\mathcal{O}_\bullet$-algebra on them, on $E$-homology
  we must get the free object on $E_* X_\alpha$ in our algebraic category
  $\mathcal{A}$.
\end{itemize}

With these ingredients, they built an obstruction theory based on
Dwyer--Kan--Stover's resolution model categories
\cite{dwyerkanstover-e2,bousfield-cosimplicial}. This asks whether we
can construct an algebra over the geometric realization
$|\mathcal{O}_\bullet|$ whose $E$-homology is a prescribed algebra
$B_*$ over $\pi_0 (E_* \mathcal{O}_\bullet)$, and produces
obstructions to it. These live in obstruction groups: $\Ext$ groups
calculated in the homotopical category $\mathcal{A}$. In rough, one
tries to build a simplicial $\mathcal{O}_\bullet$-algebra $X_\bullet$,
one degree at a time, so that the chain complex $E_*(X_\bullet)$ is a
resolution of $B_*$. The $\Ext$-groups in question involve
coefficients, and part of determining the algebra in this obstruction
theory is determining what type of object $\Ext$ takes coefficients
in.

Here are some specializations of the Goerss--Hopkins obstruction theory.
\begin{enumerate}
\item We could let $\mathcal{O}_\bullet$ be the trivial operad, and
  $E = \mb S$ so that $E_*(X) = \pi_*(X)$. A simplicial
  $\mathcal{O}_{\bullet}$-algebra is simply a simplicial spectrum, our
  ``basic cells'' are the shifts $\Sigma^t \mb S$, and the algebraic
  category $\mathcal{A}$ is the category of simplicial modules over
  the stable homotopy groups of spheres $\pi_* \mb S$---which is
  equivalent, via the Dold--Kan correspondence, to chain complexes of
  $\pi_* \mb S$-modules. The $\Ext$-groups here are classical
  $\Ext$-groups in $\pi_* \mb S$-modules. Given a $\pi_* \mb S$-module
  $M_*$, the Goerss--Hopkins obstruction theory for the existence of a
  spectrum $X$ with $\pi_* X \cong M_*$ takes place in the groups
  \[
    \Ext^{s}_{\pi_* \mb S}(M_*, \Omega^{t} M_*).
  \]
  Specifically, obstructions to existence occur when $t-s = -2$, and
  to uniqueness occur when $t-s = -1$.

  The first obstruction occurs in $\Ext^3_{\pi_* \mb S}(M_*,
  \Omega M_*)$, and we can be relatively explicit about it. Start
  with a free resolution
  \[
    0 \leftarrow M_* \leftarrow \oplus \Sigma^{m_i} \pi_* \mb S
    \xleftarrow{d_1} \oplus \Sigma^{n_j} \pi_* \mb S \leftarrow \dots
  \]
  and lift $d_1$ to a map of spectra $\bigvee \Sigma^{m_i} \mb S \leftarrow
  \bigvee \Sigma^{n_j} \mb S$, with cofiber $X^{(1)}$. This cofiber
  $X^{(1)}$ is our first approximation, and there is an exact sequence
  \[
    0 \to M_* \to \pi_* X^{(1)} \to \Sigma \ker(d_1) \to 0.
  \]
  If $X^{(1)}$ is going to eventually map to a spectrum $X$ so that
  $M_* \subset \pi_* X^{(1)}$ maps isomorphically to $\pi_* X$, this
  short exact sequence must be split over $\pi_* \mb S$. Thus, a first
  obstruction takes place in the group
  \[
    \Ext_{\pi_* \mb S}^1(\Sigma\ker(d_1), M_*) \cong \Ext_{\pi_* \mb
      S}^1(\ker(d_1),\Omega M_*).
  \]
  Applying the isomorphisms on $\Ext$ induced by the exact sequences
  \begin{align*}
    0 \to \ker(d_1) \to \oplus \Sigma^{n_j} \pi_* \mb S \to \im(d_1)
    \to 0 &\text{ and}\\
    0 \to \im(d_1) \to \oplus \Sigma^{m_i} \pi_* \mb S \to M_*
    \to 0
  \end{align*}
  identifies this with an obstruction in $\Ext^3_{\pi_* \mb S}(M_*,
  \Omega M_*)$.

  If the sequence is split, then we use the splitting to construct a
  map $\bigvee \Sigma^{p_k+1} \mb S \to X^{(1)}$ whose image in
  homotopy groups is the complementary summand, take the cofiber
  $X^{(2)}$, and again examine the resulting homotopy groups to get a
  splitting obstruction in $\Ext^4_{\pi_* \mb S}(M_*, \Omega^2
  M_*)$. This process then continues.\footnote{We note that there is
    absolutely nothing special about the stable homotopy category
    here, and we could carry this procedure out in the category of
    modules over an associative ring spectrum, the category of modules
    over a differential graded algebra, and many other triangulated
    categories that have generators.}
\item Again letting $\mathcal{O}_\bullet$ be the trivial operad, we
  could let $E$ be the Eilenberg--Mac Lane spectrum $H = H\mb F_p$. A
  simplicial $\mathcal{O}_\bullet$-algebra is a simplicial spectrum,
  our ``basic cells'' are all the finite spectra, and the algebraic
  category $\mathcal{A}$ is the category of simplicial comodules over
  the mod-$p$ dual Steenrod algebra $A_*$---which is equivalent to
  chain complexes of $A_*$-comodules. The $\Ext$-groups here are
  classical $\Ext$-groups in $A_*$-comodules. Given an $A_*$-comodule
  $M_*$, the Goerss--Hopkins obstruction theory for the existence of a
  spectrum $X$ with $H_* X \cong M_*$ as an $A_*$-comodule takes place
  in the groups
  \[
    \Ext^s_{A_*}(M_*, \Omega^t M_*).
  \]
  Specifically, obstructions to existence occur when $t-s = -2$, and
  obstructions to uniqueness occur when $t-s=-1$.
  
  A handy interpretation for uniqueness is as follows: given two such
  spectra $X$ and $Y$, these $\Ext$ groups form the $(-1)$-line of the
  Adams spectral sequence for calculating maps $X \to Y$. If $X$ and
  $Y$ are inequivalent, then any isomorphism $\phi\co H_* X \to H_* Y$
  doesn't lift to a spectrum map, and so the corresponding element
  $\phi \in \Hom_{A_*}(H_* X, H_* Y)$ on the $0$-line must support a
  differential that hits an element on the $-1$-line. Similar
  techniques are commonly used in the calculation of Picard groups
  \cite{hopkins-mahowald-sadofsky}.
\item Complementary to this, we could let $\mathcal{O}_\bullet$ be a
  nontrivial operad: the associative operad (viewed as a constant
  simplicial operad), but return to choosing $E = \mb S$. Then a
  simplicial $\mathcal{O}_\bullet$-algebra is a simplicial object in
  associative ring spectra, our basic cells are shifts of the sphere,
  and the algebraic category $\mathcal{A}$ is the category of
  simplicial algebras over $\pi_* \mb S$.\footnote{Proving that this
    is homotopically adapted is now more work, and fails if we try to
    replace ``associative'' with ``commutative'' because the homotopy
    groups of the free commutative ring spectrum on $X$ are a more
    confusing functor of $\pi_* X$.} The $\Ext$-groups of a
  $\pi_* \mb S$ algebra $B_*$ here have coefficients in a
  $B_*$-bimodule. Given a $\pi_* \mb S$-algebra $B_*$, the
  Goerss--Hopkins obstruction theory for the existence of an
  associative ring spectrum $R$ with $\pi_* R \cong B_*$ (as rings)
  takes place in certain groups
  \[
    \Der^s_{\text{assoc}}(B_*, \Omega^t B_*).
  \]
  Again, obstructions to existence occur when $t-s = -2$, and
  obstructions to uniqueness occur when $t-s=-1$. These are often
  called Andr\'e--Quillen cohomology groups of an associative
  algebra with coefficients in a bimodule
  \cite{quillen-ring-cohomology}, and are the nonabelian derived
  functors of derivations. They are also closely related to Hochschild
  cohomology: there is an exact sequence
  \[
    0 \to HH^0(B_*, M_*) \to M_* \to \Der^0(B_*, M_*) \to HH^1(B_*,
    M_*) \to 0
  \]
  that identifies the first two Hochschild cohomology groups with
  central elements and derivations modulo principal derivations, and
  there are isomorphisms
  \[
    \Der^s(B_*, M_*) \to HH^{s+1}(B_*, M_*)
  \]
  for $s > 0$.\footnote{Again, there is nothing special about $\mb
    S$ here, and we could apply this an obstruction theory for
    algebras over a commutative ring spectrum $R$ or differential
    graded algebras over a commutative ring.}
\item We could mix these procedures, getting an obstruction theory
  for associative ring spectra based on mod-$p$ homology that lives
  in Andr\'e--Quillen cohomology groups---for algebras in the category
  of $A_*$-comodules.
\item We should mention, at least in passing, the possibility of using
  a nonconstant operad $\mathcal{O}_\bullet$---for example,
  $\mathcal{O}_\bullet$ could be a simplicial resolution of the
  commutative operad. In the commutative case this tends to lead to an
  obstruction theory closely related to Robinson's obstruction theory,
  whose obstruction groups are $\Gamma$-cohomology groups
  \cite{robinson-gammahomology}. These obstruction groups have been
  examined in detail for $BP$ in \cite{richter-BPcoherences}, and do
  not take the Dyer--Lashof operations into account. Part of the goal
  of this paper is to develop an obstruction theory which does.
\end{enumerate}

\begin{rmk}
  The reader might wonder why we even bother to mention cohomology
  groups other than those containing obstructions. It is worth
  pointing out that these groups do more: the groups
  \[
    \Ext^{s}(E_* X, \Omega^t E_* Y)
  \]
  serve as a tool for calculating the homotopy groups $\pi_{t-s}
  \Map(X,Y)$ for spaces of maps between two realizations
  \cite{bousfield-cosimplicial}. In the above discussion, these
  specialize to things such as the universal coefficient spectral
  sequence and the Adams spectral sequence.
\end{rmk}

\section{Homology-based obstructions to commutativity}

In this section we will discuss a specialization of the
Goerss--Hopkins obstruction theory developed by Senger
\cite{senger-obstr}. Just as Serre's method is improved to Adams' by
switching from a technique that proceeds one homotopy group at a time
to one that uses all the cohomology information simultaneously, the
Postnikov-based obstruction theory is sometimes improved by the
Goerss--Hopkins method that can use both the Dyer--Lashof and Steenrod
information simultaneously.

To set up this obstruction theory, we need a simplicial operad
$\mathcal{O}_\bullet$ (which we choose to be a constant
$E_\infty$-operad) and a homology theory (which we choose to be
mod-$p$ homology $H_*$). A simplicial $\mathcal{O}_\bullet$-algebra is
then a simplicial $E_\infty$ ring spectrum, and our ``basic cells''
are free algebras on finite spectra. Mod-$p$ homology satisfies the
Adams--Atiyah condition, and the fact that this operad is
homotopically adapted amounts to the following theorems.
\begin{thm}[{\cite[\S III.1]{bmms-hinfty}}]
  For an $E_\infty$ ring spectrum $R$, the mod-$p$ homology $H_* R$
  has the following structure.
  \begin{enumerate}
  \item It is a comodule over the mod-$p$ dual Steenrod algebra.
  \item It is a graded-commutative ring.
  \item It has Dyer--Lashof operations that satisfy the Cartan
    formula, Adem relations, and instability relations.
  \item It satisfies the Nishida relations.
  \end{enumerate}
\end{thm}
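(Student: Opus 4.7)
The plan is to deduce each piece of structure from the $E_\infty$ operad action on $R$. Fix an $E_\infty$ operad $\mathcal{E}$ acting on $R$ via structure maps $\theta_n \co \mathcal{E}(n)_+ \sma_{\Sigma_n} R^{\sma n} \to R$. The Steenrod comodule structure on $H_*R$ is automatic for any spectrum, coming from the Steenrod coaction on $H_*(H\mb F_p \sma R)$. The graded-commutative ring structure is produced from $\theta_2$: choosing any point of the $\Sigma_2$-equivariantly contractible space $\mathcal{E}(2)$ yields a multiplication $\mu\co R \sma R \to R$, and the $\Sigma_2$-equivariance of $\theta_2$ witnesses (coherent) commutativity, which descends to honest graded-commutativity on $H_*R$.

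For the Dyer--Lashof operations, use the extended power $D_p R = \mathcal{E}(p)_+ \sma_{\Sigma_p} R^{\sma p}$. A K\"unneth-style identification expresses $H_*(D_p R)$ in terms of $H_*(B\Sigma_p)$ and symmetric powers of $H_* R$; choosing explicit classes $e_i \in H_i(B\Sigma_p)$ and applying $H_*(\theta_p)$ to $e_i \tens x^{\tens p}$ defines the operations $Q^s(x)$ for $x \in H_*R$. The Cartan formula then follows from compatibility of $\theta_p$ with $\mu$ via the operadic composition maps, which at the level of classifying spaces is encoded by the diagonal $B\Sigma_p \to B\Sigma_p \times B\Sigma_p$. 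The Adem relations come from analyzing $\theta_{p^2}$ through the two standard embeddings $\Sigma_p \wr \Sigma_p \hookrightarrow \Sigma_{p^2}$: both factorizations give a priori distinct expressions for a single class in $H_*(D_{p^2} R)$, and their forced equality produces the relations. Instability reflects the vanishing of the relevant classes $e_i$ outside the appropriate range determined by the grading of $x$.

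The most delicate step is the Nishida relations, which couple the Steenrod coaction with the operations $Q^s$. These arise by tracking how Steenrod operations act on both sides of $H_*(\theta_p)$: on the target they act directly on $H_*R$, while on the source they act on $H_*(B\Sigma_p)$ and on $H_*R$ separately via the Cartan formula. Expanding the Steenrod coaction applied to $e_i \tens x^{\tens p}$, and then rewriting the resulting sum of products using the known action on $H^*(B\Sigma_p)$, re-expresses the outcome as a combination of terms of the form $Q^t \Sq^r x$, yielding the Nishida relations. The main obstacle, practically, is not any single identity but the sheer amount of operadic bookkeeping needed to verify all the relevant naturalities and coherences; this is the content carried out in \cite[\S III.1]{bmms-hinfty}, and we invoke it rather than reproduce it.
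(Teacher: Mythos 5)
Your outline is correct and is essentially the same route the paper takes: the paper offers no independent argument but simply cites \cite[\S III.1]{bmms-hinfty}, and your sketch (Steenrod coaction for free, multiplication from $\mathcal{E}(2)$, operations $Q^s$ from $H_*(B\Sigma_p)$ via extended powers, Adem relations from $\Sigma_p \wr \Sigma_p \subset \Sigma_{p^2}$, Nishida relations from tracking the Steenrod action on $H_*(\theta_p)$) is precisely the standard construction carried out there, with the remaining bookkeeping deferred to the same reference. No discrepancy to report.
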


Following the literature, we will refer to such algebras as
$AR$-algebras.\footnote{Because the origin of this subject is in
  studying the homology of infinite loop spaces rather than the
  homology of $E_\infty$ ring spectra, the definitions of
  $AR$-algebras in the literature often involve connectivity
  assumptions and only discuss Dyer--Lashof operations of nonnegative
  degree.}
  
\begin{thm}[{\cite[\S IX.2]{bmms-hinfty}}]
  The following results about $AR$-algebras hold:
  \begin{enumerate}
  \item The forgetful functor from $AR$-algebras to graded comodules has
    a left adjoint $\mb Q$.
  \item If a comodule $M$ has a basis over $\mb F_p$ of
    elements $e_i$ in degrees $n_i$, then $\mb Q(M)$ is a free
    graded-commutative algebra algebra on the elements $P(e_i)$ such
    that $P$ is an admissible monomial in the Dyer--Lashof algebra of
    excess at least $n_i$.
  \item If we write $\mb P(X)$ for the free $E_\infty$ ring spectrum
    on $X$, then the homology of $\mb P(X)$ is a free $AR$-algebra: the
    natural map $H_* X \to H_* \mb P(X)$ induces a natural isomorphism
    \[
      \mb Q(H_* X) \to H_* \mb P(X).
    \]
  \end{enumerate}
\end{thm}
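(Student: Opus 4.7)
The plan is to prove the three claims in order, with (1) essentially formal, (2) combinatorial, and (3) requiring a classical geometric input. For (1), I recognize the category of $AR$-algebras as algebras for a monad $T$ on graded $A_*$-comodules. The underlying functor of $T$ is built in stages---free Dyer--Lashof module (subject to instability, with $A_*$-coaction determined by the Nishida relations), free graded-commutative algebra, and quotient by the Adem and Cartan relations. Every relation involves only finitely many inputs, so $T$ preserves filtered colimits; since graded $A_*$-comodules form a locally presentable category, the adjoint functor theorem (equivalently, the standard free-algebra construction for an accessible monad) produces the left adjoint $\mb Q$.

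For (2), since $\mb Q$ preserves colimits I reduce to $M$ finitely generated over $\mb F_p$. Starting from a basis $\{e_i\}$ with $|e_i| = n_i$, I first form the free Dyer--Lashof module subject to instability; by the classical admissible-monomial theorem this is freely spanned by the elements $P(e_i)$ with $P$ admissible of excess $\ge n_i$. I then form the free graded-commutative algebra on these generators, which is compatible with the Cartan formula (it is precisely the Leibniz rule for the operations on products). The $A_*$-coaction is forced on these generators by the Nishida relations applied to the coaction on $M$, and the key consistency check is that Nishida, Adem, Cartan, and instability together do not impose any additional relation on the generating set---a well-known compatibility that can be verified on the universal examples.

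For (3), the unit $H_*X \to H_*\mb P(X)$ is a map of $A_*$-comodules into an $AR$-algebra, so by adjunction it extends to a natural comparison map $\mb Q(H_*X) \to H_*\mb P(X)$. Both sides commute with wedges and filtered colimits of spectra, so it suffices to treat the case of a single basic cell, say $X = \Sigma^k H\mb F_p$. Then $\mb P(X) \simeq \bigvee_n (E\Sigma_n)_+ \sma_{\Sigma_n} X^{\sma n}$, and the length-$n$ summand of the target is to be identified with the $n$-fold-product piece of $\mb Q(H_*X)$ computed in (2)---i.e., with the span of admissible Dyer--Lashof monomials of length $n$ and excess $\ge k$ applied to the fundamental class in degree $k$.

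The main obstacle is this last identification. Computing $H_*((E\Sigma_n)_+ \sma_{\Sigma_n} (\Sigma^k H\mb F_p)^{\sma n})$ is the geometric heart of Dyer--Lashof theory: it requires a careful transgression argument in the Serre spectral sequence for $E\Sigma_n \times_{\Sigma_n} X^n \to B\Sigma_n$ (or an equivalent Eilenberg--Moore / bar-construction argument), together with a linear-independence verification that the Dyer--Lashof monomials so produced span the whole summand. Parts (1) and (2) are comparatively formal once the full list of $AR$-algebra relations is in hand, so the bulk of the work---and essentially all of the content---lives in this geometric computation.
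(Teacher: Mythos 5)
The paper does not prove this statement at all: it is quoted as background from \cite[\S IX.2]{bmms-hinfty} (Bruner--May--McClure--Steinberger; see also Cohen--Lada--May and Steinberger's Chapter III), so the benchmark is the classical proof there. Measured against that, your parts (1) and (2) follow the standard construction of the free allowable $AR$-algebra and are fine in outline, but the ``consistency check'' you wave at in (2) --- that the Nishida, Adem, Cartan and instability relations force no further relations, and in particular that the Nishida formulas really do define an $A_*$-coaction on the free object --- is not a formality to be ``verified on the universal examples'': the universal examples \emph{are} the extended powers of spheres, i.e.\ exactly the computation you postpone to (3), so as written this step is either unproven or circular.

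The genuine gap is in (3). First, the reduction is flawed: taking the basic cell to be $X=\Sigma^k H\mathbb{F}_p$ is not the right move (its mod-$p$ homology is a shifted copy of $A_*$, not one-dimensional; you presumably want $X=S^k$), and in any case arbitrary spectra are not wedges or filtered colimits of such cells --- they are filtered colimits of finite spectra built from spheres by \emph{cofiber sequences}, and $D_nX=(E\Sigma_n)_+\wedge_{\Sigma_n}X^{\wedge n}$ does not preserve cofiber sequences, so cellular induction does not come for free. The standard reduction to spheres requires knowing in advance that $H_*D_nX$ is a functor of $H_*X$, e.g.\ via the homotopy-orbit spectral sequence $H_*(\Sigma_n;(H_*X)^{\otimes n})\Rightarrow H_*D_nX$ and its collapse, or via the $\Sigma_n$-equivariant identification of $H\mathbb{F}_p\wedge X^{\wedge n}$ as a generalized Eilenberg--Mac~Lane module --- and establishing that is already part of the content. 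Second and decisively, the identification of $H_*D_nS^k$ with the span of admissible Dyer--Lashof monomials of the prescribed excess, including the spanning and linear-independence statements, is the entire substance of the theorem, and you explicitly leave it unexecuted. What you have is a reasonable roadmap to the argument of \cite{bmms-hinfty}, not a proof; since the paper itself simply cites that reference, the honest conclusion is that the key input remains exactly the cited computation.
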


As a result, the mod-$p$ homology of simplicial $E_\infty$ ring spectra
takes place in the category of simplicial $AR$-algebras. The
$\Ext$-groups of an $AR$-algebra algebra $B_*$ here have coefficients
in an $AR$-$B_*$-module: a $B_*$-module in $A_*$-comodules, with
compatible Dyer--Lashof operations $Q^s$ such that $Q^s(x) = 0$ for $s
\leq |x|$.

\begin{thm}[{\cite{senger-obstr}}]
Given $B_*$, an $AR$-algebra, there
are Goerss--Hopkins obstruction groups
\[
  \Der^{s}_{AR}(B_*, \Omega^t B_*)
\]
calculated in the category of simplicial $AR$-algebras. The groups
with $t-s = -2$ contain an iterative sequence of obstructions to
realizing $B_*$ by an $E_\infty$ ring spectrum $R$ such that
$H_* R \cong B_*$, and the groups with $t-s = -1$ contain obstructions
to uniqueness.
\end{thm}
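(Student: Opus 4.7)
The plan is to specialize the general Goerss--Hopkins machinery of \cite{goerss-hopkins-moduliproblems} to the constant $E_\infty$-operad together with $E = H\mb F_p$, and then identify the $\Ext$-groups it produces with the groups $\Der^s_{AR}$.

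First I would verify the two structural hypotheses required to turn the abstract machine on. The Adams--Atiyah condition for mod-$p$ homology is standard: one may take the library of basic cells to consist of all finite spectra, and for any $Y$ the images of the maps $H_* X_\alpha \to H_* Y$ are jointly surjective. The substantive condition is that the $E_\infty$-operad is \emph{homotopically adapted} to $H\mb F_p$, and this is exactly what the two BMMS theorems quoted above provide: the first says that $H_* R$ is naturally an $AR$-algebra, so that applying $H_*$ levelwise to a simplicial $E_\infty$ ring spectrum gives a simplicial $AR$-algebra, and the second, the natural isomorphism $\mb Q(H_* X) \cong H_* \mb P(X)$, is the requisite compatibility between the algebraic free-algebra monad $\mb Q$ and the topological free-algebra monad $\mb P$ on the basic cells.

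With these hypotheses in hand, the Dwyer--Kan--Stover resolution-model-category argument of \cite{goerss-hopkins-moduliproblems} fires and produces an obstruction theory whose groups are $\Ext^s$-groups in the homotopical category of simplicial $AR$-algebras with coefficients in an $AR$-$B_*$-module. I would then identify these with $\Der^s_{AR}(B_*, \Omega^t B_*)$ by the standard recipe for $\Ext$ in a category of algebras over a monad: take a simplicial free $AR$-algebra resolution $P_\bullet \to B_*$, apply the derivation functor into $\Omega^t B_*$, and read off the cohomology of the resulting cosimplicial abelian group. The placement of existence obstructions on the $t-s=-2$ line and uniqueness obstructions on the $t-s=-1$ line is inherited verbatim from the general framework, which builds $R$ one simplicial degree at a time against a simplicial free resolution of $B_*$.

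The main obstacle is the homotopical adaptedness step: one needs the natural isomorphism $\mb Q(H_* X) \cong H_* \mb P(X)$ for \emph{every} spectrum $X$, not only for those with projective mod-$p$ homology, since this is what guarantees that $H_*$ converts the free simplicial resolutions used to probe $E_\infty$ ring spectra into free simplicial resolutions in the category of $AR$-algebras, thereby matching up the topological moduli problem with the algebraic $\Ext$-computation. Once this BMMS input is accepted, the remainder is a mechanical specialization of \cite{goerss-hopkins-moduliproblems}.
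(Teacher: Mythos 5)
Your plan is essentially the route the paper itself indicates: it cites the theorem to Senger as the specialization of Goerss--Hopkins obstruction theory to a constant $E_\infty$-operad and $E = H\mb F_p$, with the two BMMS theorems (naturality of the $AR$-structure and $\mb Q(H_* X) \cong H_* \mb P(X)$) supplying exactly the homotopically adapted condition, and the obstruction groups identified as Andr\'e--Quillen-type $\Der^s_{AR}$ via free simplicial resolutions. Your only quibble-worthy point--that the free-algebra isomorphism is needed for all $X$ rather than just basic cells--is harmless here since over $\mb F_p$ all homology is projective, so the proposal matches the paper's approach.
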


From this point forward, it will be our goal to give methods to both
calculate these $\Ext$-groups in specific cases and to interpret
elements in them as concrete obstructions.

\section{Tools for calculation}

In this section, we will begin discussing how \cite{senger-obstr}
reduces the Goerss--Hopkins obstruction theory for the Brown--Peterson
spectrum $BP$ and its truncated versions $BP\langle n\rangle$ to more
straightfoward calculations.\footnote{Because these calculations only
  depend on mod-$p$ homology, they apply to the generalized
  $BP\langle n\rangle$ as defined in \cite[\S 3]{level3}.} We need to
calculate the obstruction groups
\[
  \Der^s_{AR}(H_* BP, \Omega^t H_* BP).\footnote{From here forward,
    rather than using the shift operator $\Omega^t$ we will often
    regard the obstruction groups as bigraded. The number $s$ is the
    \emph{filtration} degree, and the number $t-s$ is the \emph{total}
  degree.}
\]

We begin by recalling the structure of $H_* BP$ as a comodule over the
dual Steenrod algebra. The dual Steenrod algebra $A_*$ has quotient Hopf
algebras, given by exterior algebras
\[
  E(n)_* = \Lambda[\tau_0, \tau_1, \dots, \tau_n]
\]
with $|\tau_i| = 2p^i - 1$. (At the prime $2$, $\tau_i$ is the image
of $\xi_i$.) When $n=\infty$, we get a Hopf algebra $E_*$. The
homology $H_* BP\langle n\rangle$ can be identified with a coextended
comodule:
\[
  H_* BP\langle n\rangle \cong A_* \square_{E(n)_*} \mb F_p.
\]
This coextension functor is right adjoint to the functor from
$A_*$-comodules to $E(n)_*$-comodules. To proceed, we need to know that
this adjunction is compatible with Dyer--Lashof operations.
\begin{prop}
  Let $p=2$, and let $M_r$ be the Milnor primitive of
  degree $2^r - 1$ dual to $\xi_r$ in the Milnor basis of the dual
  Steenrod algebra. Then the Nishida relations imply
  \begin{equation}
    \label{eq:milnornishida}
    M_r Q^s = (s+1)Q^{s - 2^r + 1} + \sum_{0 \leq k < r} Q^{s - 2^r + 2^k}
    M_k.
  \end{equation}
\end{prop}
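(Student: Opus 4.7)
The plan is to derive the formula by specializing the standard Nishida relations at $p=2$,
\[
  \Sq^a_* Q^s \;=\; \sum_{t \geq 0} \binom{s-a}{a-2t}\, Q^{s-a+t}\, \Sq^t_*,
\]
combined with an inductive identity expressing the Milnor primitive $M_{r+1}$ in terms of $M_r$. Specifically, in the mod-$2$ Steenrod algebra one has the commutator recursion
\[
  M_{r+1} \;=\; \Sq^{2^r}_*\, M_r + M_r\, \Sq^{2^r}_*,
\]
which realizes $M_{r+1}$ as a commutator of $M_r$ with $\Sq^{2^r}_*$; this descends from the classical identity $Q_r = [\Sq^{2^r}, Q_{r-1}]$ for Milnor primitives in the Steenrod algebra.

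I would proceed by induction on $r$. For the base case $r=1$, $M_1 = \Sq^1_*$, and applying Nishida with $a=1$ gives $\Sq^1_* Q^s = (s-1)Q^{s-1} \equiv (s+1) Q^{s-1} \pmod 2$, matching the claimed formula (the sum over $0 \leq k < 1$ contributing nothing). For the inductive step $r \Rightarrow r+1$, I would compute
\[
  M_{r+1} Q^s \;=\; \Sq^{2^r}_*\bigl(M_r Q^s\bigr) + M_r\bigl(\Sq^{2^r}_* Q^s\bigr),
\]
substituting the inductive formula for $M_r Q^s$ into the first summand and expanding each resulting $\Sq^{2^r}_* Q^{s'}$ via Nishida, and in the second summand expanding $\Sq^{2^r}_* Q^s$ via Nishida first and then applying the inductive formula to each $M_r Q^{s'}$ that appears.

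The main obstacle is the mod-$2$ binomial coefficient bookkeeping. By Lucas' theorem, most of the coefficients $\binom{s'-2^r}{2^r-2t}$ vanish because the binary expansion of $2^r - 2t$ is severely constrained; the surviving contributions are concentrated at values of $t$ where $2^r - 2t = 2^k$ for some $0 \leq k \leq r$, which is precisely what is needed to produce the shifts $Q^{s-2^{r+1}+2^k}$ appearing on the right-hand side. The delicate step is then to combine the mixed terms $\Sq^{2^r}_* M_k$ and $M_k\, \Sq^{2^r}_*$ that emerge from the two summands: by applying the commutator identity in reverse they should collapse pairwise into the next Milnor primitive, producing in particular the new correction indexed by $k=r$ in the $M_{r+1}$ formula, while any stray compositions $Q^{s''}\, \Sq^{t'}_*$ with $t' > 0$ cancel. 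Verifying these cancellations term-by-term, together with extracting the universal leading coefficient $(s+1)$ from two independent expansions, is the heart of the calculation.
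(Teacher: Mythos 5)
The paper states this proposition without proof (it is simply asserted to follow from the Nishida relations), so there is nothing to compare against; on its own terms, your inductive scheme is viable. The recursion $M_{r+1}=\Sq^{2^r}_*M_r+M_r\Sq^{2^r}_*$ is correct (the commutator is symmetric at $p=2$, so dualizing to homology causes no trouble), the base case $\Sq^1_*Q^s=(s-1)Q^{s-1}\equiv(s+1)Q^{s-1}$ is right, and the induction does close in low cases: for instance it yields $M_2Q^s=(s+1)Q^{s-3}+Q^{s-2}M_1$ and then $M_3Q^s=(s+1)Q^{s-7}+Q^{s-6}M_1+Q^{s-4}M_2$, in agreement with \eqref{eq:milnornishida} (note that the $k=0$ summand there is vacuous --- $M_0$ would sit in degree $0$ --- and your empty-sum reading of the base case silently assumes this; say so explicitly).

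The gap is in the step you yourself call the heart of the calculation, and your proposed mechanism for it is wrong. Lucas' theorem does not kill the coefficients $\binom{s'-2^r}{2^r-2t}$ away from $2^r-2t$ a power of $2$: for a general $s$ many of them are odd, and already in the step $r=2\Rightarrow 3$ the stray term $Q^{s-2}\Sq^2_*$ in $\Sq^4_*Q^s$ carries coefficient $\binom{s-4}{0}=1$ for every $s$. The stray compositions $Q^{s''}\Sq^{t}_*$ do not vanish term by term; they must cancel \emph{between} the two summands of the commutator, and for the mixed terms with $k<r$ and $t\neq 2^k$ this requires not the defining recursion but the general commutation rule $M_k\Sq^t_*+\Sq^t_*M_k=\Sq^{t-2^k}_*M_{k+1}$ (dual to $[\Sq^t,Q_{k-1}]=Q_k\Sq^{t-2^k}$ in the Steenrod algebra), which your outline never invokes and which generates further terms to be tracked. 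Likewise the coefficients in the target formula come from genuine mod-$2$ binomial identities rather than from any Lucas-type concentration; e.g.\ in the passage from $r=2$ to $r=3$ one must check $(s+1)\bigl[\binom{s-7}{4}+\binom{s-4}{4}\bigr]\equiv s+1$ and $(s+1)\binom{s-7}{2}+\binom{s-6}{4}+\binom{s-4}{4}+s\binom{s-4}{2}\equiv 1$. These verifications are exactly what your sketch defers, so as written you have a plausible plan whose decisive combinatorial step is both missing and misdiagnosed; supplying the general commutation rule above and the Vandermonde-style coefficient identities is what would turn it into a proof.
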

The existence of this formula means that it is possible to define a
category of $E(n)_*$-comodule algebras or $E_*$-comodule algebras with
Dyer--Lashof operations, which we will refer to as $E(n)R$-algebras or
$ER$-algebras respectively. These are compatible across $n$.

\begin{prop}
  The forgetful functors from $A_*$-comodules to $E_*$-comodules and
  $E(n)_*$-comodules lift to ones from $AR$-algebras to $ER$-algebras and
  $E(n)R$-algebras. These functors have exact right adjoints, given by
  $A_* \square_{E(n)_*} (-)$ and $A \square_{E_*}(-)$ on the underlying
  comodules. 
\end{prop}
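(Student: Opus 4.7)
The plan is to verify the lifting of the forgetful functor and the construction of its right adjoint, together with exactness, by directly constructing the relevant structures.

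For the forgetful functor, the argument is essentially tautological: an $AR$-algebra $M$ is an $A_*$-comodule, so restriction along the Hopf-algebra surjection $A_* \twoheadrightarrow E(n)_*$ (resp.\ $A_* \twoheadrightarrow E_*$) yields an $E(n)_*$-coaction (resp.\ $E_*$-coaction) on the same underlying algebra with the same Dyer--Lashof operations. The compatibility demanded of an $E(n)R$-algebra---namely \eqref{eq:milnornishida} for each Milnor primitive $M_r$ with $r\leq n$---is a direct consequence of the Nishida relations satisfied by $M$, and since the only Milnor primitives $M_k$ appearing have $k<r\leq n$, the identity is witnessed entirely by the $E(n)_*$-coaction. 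Functoriality is immediate, and the $E_*$ case is analogous with no upper bound on $r$.

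For the right adjoint, I would define $F(N) := A_* \square_{E(n)_*} N$ as the usual cotensor on underlying comodules, realized inside $A_* \otimes N$. The ambient tensor product carries a graded-commutative ring structure, an $A_*$-coaction via comultiplication on the first factor, and Dyer--Lashof operations defined by the Cartan formula using the $AR$-structure on $A_* = H_* H\mb F_p$ together with the Dyer--Lashof structure on $N$. Each of these restricts to the cotensor subspace. The critical step is invariance under $Q^s$: expanding $Q^s(a\otimes n) = \sum_{p+q=s} Q^p a \otimes Q^q n$ by Cartan and applying \eqref{eq:milnornishida} to both factors---for $A_*$ because it is itself an $AR$-algebra, and for $N$ by hypothesis---the contributions from each Milnor primitive match term-by-term, so the equalizer condition defining the cotensor is preserved. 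The Cartan, Adem, instability, and Nishida axioms are then inherited from $A_* \otimes N$.

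The adjunction itself follows by taking the unit $M \to F(M)$ to be the $A_*$-coaction $\psi_M$ (whose image lies in the cotensor by coassociativity of the coaction) and the counit $F(N) \to N$ to be the restriction of $A_* \otimes N \to \mb F_p \otimes N = N$ induced by the augmentation of $A_*$. The triangle identities reduce to the classical comodule cotensor adjunction together with the observation above that these maps preserve the Dyer--Lashof and multiplicative structures. Exactness is inherited from exactness of the cotensor with $A_*$ over $E(n)_*$ at the comodule level, which in turn follows from the classical fact that $A_*$ is free as an $E(n)_*$-comodule.

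The hardest step will be the closure of $F(N)$ under $Q^s$. Formula \eqref{eq:milnornishida} is exactly the combinatorial identity needed to make the two sides of the cotensor condition agree: the action of $M_r$ on $Q^p a$ and on $Q^q n$ each decomposes into a leading $(s+1)$-weighted $Q^{s-2^r+1}$ term plus lower-order $Q^{\bullet}M_k$ corrections, and matching them across all $r$ will require an induction on $r$ together with careful bookkeeping; it introduces no ideas beyond \eqref{eq:milnornishida} and the Nishida relations for $A_*$.
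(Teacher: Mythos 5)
The paper states this proposition without proof (it is deferred to Senger's setup, with \eqref{eq:milnornishida} offered as the reason the categories exist at all), so the question is whether your sketch actually closes the argument, and it does not: the gap is exactly at the step you flag as hardest. In the cotensor $A_* \square_{E(n)_*} N$ the factor $A_*$ enters through its \emph{right} $E(n)_*$-coaction $(1\otimes\pi)\Delta$, whereas \eqref{eq:milnornishida} is a statement about the \emph{left} Milnor-primitive action, i.e.\ the Steenrod action belonging to the $AR$-structure of $A_* = H_*H\mathbb{F}_p$. Since $A_*$ is not cocommutative these are genuinely different operators, so you may not ``apply \eqref{eq:milnornishida} to both factors.'' Worse, even if both factors did satisfy a formula of that shape, the asserted term-by-term cancellation fails: testing the equalizer condition on $Q^s_{\mathrm{Cartan}}(x)$ for $x=\sum a_i\otimes n_i$ in the cotensor, the correction terms do cancel (each side contributes $\sum_{k<r} Q^{s-2^r+2^k}_{\mathrm{Cartan}}$ applied to $\sum M_k a_i\otimes n_i$ resp.\ $\sum a_i\otimes M_k n_i$, equal by hypothesis), but the two leading families $\sum(p+1)\,Q^{p-2^r+1}a_i\otimes Q^q n_i$ and $\sum(q+1)\,Q^{p}a_i\otimes Q^{q-2^r+1} n_i$ reindex over $u+v=s-2^r+1$ with coefficients $u$ and $v$ mod $2$, so their sum is $(s+1)\,Q^{s-2^r+1}_{\mathrm{Cartan}}(x)$, not $0$. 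So the mechanism you propose cannot prove closure of the cotensor under the Cartan-formula operations.

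What the verification actually requires is the formula governing the \emph{right} coaction against the $Q$'s on $A_*$. That formula comes from the fact that $\Delta\colon A_*\to A_*\otimes A_*$ commutes with Dyer--Lashof operations (Cartan formula on the target), and it has \emph{no} free term: since $Q^u$ of a decomposable is decomposable, one gets $(Q^s a)\cdot M_r=\sum_{m<r}\sum_u \langle M_r, Q^u\xi_m\rangle\, Q^{s-u}(a\cdot M_m)$, with coefficients given by Steinberger's computation of the Dyer--Lashof action on $A_*$. Closure of the cotensor then amounts to a nontrivial interlocking of Steinberger's coefficients with the Nishida-type formula \eqref{eq:milnornishida} on $N$ (the surviving free term on the $N$-side has to be absorbed), not to bookkeeping; likewise your claim that the unit (the coaction) is a map of Dyer--Lashof algebras is the ``coaction form'' of the Nishida relations, whose equivalence with the componentwise axioms again uses Steinberger's calculation rather than being formal. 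The statement itself is correct, and the overall shape of your argument (lift of the forgetful functor via \eqref{eq:milnornishida}, cotensor as right adjoint, exactness from $A_*$ being cofree over $E(n)_*$) is reasonable, but as written the central closure step is unproved and the cancellation argument offered for it is false.
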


\begin{rmk}
  There are analogous equations to \eqref{eq:milnornishida} at odd
  primes:
  \begin{align*}
    M_r Q^s &= \pm \beta Q^{s - 2p^r + 1} \pm \sum_{0 \leq k < r} Q^{s
              - 2p^r + 2p^k} M_k,\\
    M_r \beta Q^s &= \pm \sum_{0 \leq k < r} \beta Q^{s - 2p^r + 2p^k} M_k,\\
  \end{align*}
  We have not verified the signs ourselves and so are not comfortable
  stating them here.
\end{rmk}

Due to exactness of both functors in this free-forgetful adjunction,
the adjunction carries forward to an isomorphism of $\Ext$-groups.

\begin{prop}
  For any $AR$-algebra R with an augmentation $R \to A_* \square_{E_*}
  \mb F_p$, there is an isomorphism
  \[
    \Der^s_{AR}(R, A_* \square_{E_*} \mb F_p) \cong
    \Der^s_{ER}(R, \mb F_p).
  \]
  Similarly, for any $AR$-algebra R with an augmentation $R \to A_*
  \square_{E(n)_*} \mb F_p$, there is an isomorphism
  \[
    \Der^s_{AR}(R, A_* \square_{E(n)_*} \mb F_p) \cong
    \Der^s_{E(n)R}(R, \mb F_p).
  \]
\end{prop}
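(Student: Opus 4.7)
The plan is to derive both isomorphisms from the free-forgetful adjunction $U \dashv A_*\square_{E_*}(-)$ (and its $E(n)_*$-analogue) of the previous proposition, using the exactness of both functors to descend to the derived functors of derivations. Since both cases proceed by identical arguments, I focus on the $E_*$-version.

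First, I compute both sides using a simplicial resolution $P_\bullet \to R$ by free $AR$-algebras, so that $\Der^s_{AR}(R, A_*\square_{E_*}\mb F_p) \cong \pi^s\Der_{AR}(P_\bullet, A_*\square_{E_*}\mb F_p)$. Because $U$ is a left adjoint it preserves colimits, and in particular sends the free $AR$-algebra on an $A_*$-comodule $V$ to the free $ER$-algebra on $UV$; exactness of $U$ on underlying comodules then guarantees that $UP_\bullet \to UR$ remains a free simplicial resolution in $ER$-algebras, computing $\Der^\ast_{ER}(UR, \mb F_p)$. The augmentation on $UR$ is obtained by applying $U$ to the augmentation on $R$ and composing with the counit $U(A_*\square_{E_*}\mb F_p) \to \mb F_p$ of the adjunction.

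Second, I establish the levelwise natural isomorphism $\Der_{AR}(P,\ A_*\square_{E_*}\mb F_p) \cong \Der_{ER}(UP,\mb F_p)$ for each term $P = P_n$. A derivation on the left is an augmented-$AR$-algebra section of the square-zero extension $P \oplus (A_*\square_{E_*}\mb F_p) \to P$; under the adjunction this corresponds to an augmented-$ER$-algebra section of $UP \oplus \mb F_p \to UP$, which is precisely an element of the right-hand side. Compatibility of the adjunction with the Dyer--Lashof structure is exactly the content of \eqref{eq:milnornishida}, which is what allows $A_*\square_{E_*}(-)$ to be a functor on $ER$-algebras in the first place. Taking $\pi^s$ of this isomorphism applied levelwise to $P_\bullet$ then completes the proof.

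The main obstacle is the second step: verifying that the algebra-level adjunction genuinely descends to one on derivations in the presence of both a comodule structure and Dyer--Lashof operations. This requires checking that the square-zero extension functor commutes with both $U$ and $A_*\square_{E_*}(-)$, and that the Leibniz, Cartan, and Nishida relations transfer along the adjunction---all of which ultimately reduce to the compatibility already packaged by equation \eqref{eq:milnornishida}. The $E(n)_*$-version of the proposition goes through verbatim with $E_*$ replaced by $E(n)_*$ throughout.
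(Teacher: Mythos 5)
Your proposal is essentially the paper's own argument made explicit: the paper justifies this proposition in a single line, observing that both functors in the free--forgetful adjunction $U \dashv A_*\square_{E_*}(-)$ (resp.\ $A_*\square_{E(n)_*}(-)$) are exact so that the adjunction descends to the derived derivation groups, and your levelwise identification over a free simplicial resolution is exactly that argument spelled out. One small repair: preservation of colimits by $U$ does not by itself give $U(\mathbb{Q}_{AR}(V)) \cong \mathbb{Q}_{ER}(UV)$; this instead follows because the right adjoint is computed on underlying comodules (the preceding proposition), so the square of right adjoints commutes with the forgetful functors to comodules and hence the square of left adjoints commutes, which is what makes $U$ send free $AR$-algebras to free $ER$-algebras.
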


\begin{rmk}
  It's work noting that there are two $ER$-module (or $E(n)R$-module)
  structures on $\mb F_p$: one has $Q^0$ acting as the identity, and
  the other has $Q^0$ acting trivially. The group appearing above is
  the former. Only the latter has \emph{deloopings}: the shifts
  $\Sigma^t \mb F_p$ for $t \geq 0$ must have $Q^0 = 0$ to satisfy the
  instability relations. Therefore, the obstruction groups $\Der^s(R,
  \Omega^t A_s \square_{E_*} \mb F_p)$ do not, a priori, extend to an
  integer grading in $t$. We will later show that, in the range
  of interest, both modules have the same cohomology, and the added
  integer grading is very useful in assembling a systematic calculation.
\end{rmk}

The groups $\Der^s_{E(n)R}(R, \mb F_p)$ can calculated is as follows:
resolve $R$ by a simplicial $E(n)R$-algebra which is free in each
simplicial degree on some free $E(n)_*$-comodule,\footnote{As we will
  see shortly, because $E(n)_*$ is finite-dimensional its comodules
  are equivalent to $E(n)^*$-modules, and so there is an ample supply
  of free comodules. This is more problematic for $A_*$-comodules or
  $E_*$-comodules.} take indecomposables in each simplicial degree,
apply $\Hom_{E(n)R\Mod}(-,\mb F_p)$ in the category of
$E(n)_*$-\emph{comodules} with Dyer--Lashof operations to get a
cosimplicial abelian group, and take the cohomology groups of the
associated cochain complex. This description as a composite allows us
to get a Grothendieck spectral sequence.

\begin{prop}
  For any $E(n)R$-algebra $R$ with an augmentation $R \to \mb F_p$,
  there is a spectral sequence
  \[
    \Ext^p_{E(n)R\Mod}(AQ_q(R), \mb F_p) \Rightarrow \Der^{p+q}_{E(n)R}(R,\mb F_p),
  \]
  where $AQ_*(R)$ are the ordinary Andr\'e--Quillen homology groups of
  $R$ (the nonabelian derived functors of the indecomposables functor
  $Q$).  In particular, if the underlying algebra of $R$ is a free
  graded-commutative algebra, this degenerates to an isomorphism
  \[
    \Ext^s_{E(n)R\Mod}(Q R, \mb F_p) \cong \Der^s_{E(n)R}(R,\mb F_p).
  \]
\end{prop}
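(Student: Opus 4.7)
The plan is to realize the stated spectral sequence as the Grothendieck-type spectral sequence of a composition of functors. The key identity is
\[
\Der^{0}_{E(n)R}(R, \mb F_p) = \Hom_{E(n)R\Mod}(QR, \mb F_p),
\]
where $Q$ is the indecomposables functor from augmented $E(n)R$-algebras to $E(n)R$-modules, left adjoint to the square-zero construction $M \mapsto \mb F_p \oplus M$. By definition, the non-abelian left derived functors of $Q$ are the Andr\'e--Quillen homology groups $AQ_{*}(R)$, and the right derived functors of $\Hom_{E(n)R\Mod}(-, \mb F_p)$ are $\Ext^{*}_{E(n)R\Mod}(-, \mb F_p)$, so $\Der^{*}_{E(n)R}(R, \mb F_p)$ should present itself as the total derived functor of the composite.

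To make this concrete, I would first choose a cofibrant simplicial resolution $P_\bullet \to R$ in simplicial $E(n)R$-algebras, with each $P_n$ free on a free $E(n)_{*}$-comodule. The structure theorem for free $E(n)R$-algebras ensures that each $Q(P_n)$ is a free (hence projective) $E(n)R$-module, and $\pi_{q} Q(P_\bullet) = AQ_{q}(R)$ by definition; the cohomology of $\Hom_{E(n)R\Mod}(Q(P_\bullet), \mb F_p)$ then computes $\Der^{*}_{E(n)R}(R, \mb F_p)$. The spectral sequence arises from the standard double-complex argument: pair $Q(P_\bullet)$ with an injective resolution of $\mb F_p$ in $E(n)R$-modules and filter in the two directions. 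One filtration degenerates at $E_{1}$ because each $Q(P_n)$ is projective, while the other has $E_{2}^{p,q} = \Ext^{p}_{E(n)R\Mod}(AQ_{q} R, \mb F_p)$, and both converge to the same total cohomology, giving the claimed spectral sequence.

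For the degeneration statement when the underlying algebra of $R$ is free graded-commutative, the goal is to show that $AQ_{q}(R) = 0$ for $q > 0$. The idea is to exhibit an explicit small resolution: pick comodule generators lifting polynomial generators of $R$, form the free $E(n)R$-algebra $F$ on them, and use polynomial freeness of the underlying commutative algebra to build a Koszul-type resolution of $R$ over $F$ whose derived indecomposables are concentrated in degree zero. The spectral sequence then collapses onto the row $q = 0$ and produces the stated isomorphism.

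The main obstacle is precisely this last degeneration step: transferring the classical smoothness of polynomial commutative rings to the Dyer--Lashof-enriched $E(n)R$-algebra category requires a careful compatibility check, since the additional operadic structure (Dyer--Lashof operations and the $E(n)_{*}$-coaction) could in principle contribute new derived indecomposables, and one must verify that polynomial freeness of the \emph{underlying} algebra is enough to prevent this. In contrast, the construction of the spectral sequence itself is essentially formal once the free-algebra structure theorem is in hand; the delicate work is in justifying why the degeneration hypothesis given---freeness of the underlying commutative algebra, with no explicit reference to the Dyer--Lashof structure---suffices.
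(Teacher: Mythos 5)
Your construction of the spectral sequence is the same as the paper's: resolve $R$ by a simplicial $E(n)R$-algebra that is free in each degree, take indecomposables, and run the Grothendieck/double-complex spectral sequence for the composite of derived $Q$ with $\Hom_{E(n)R\Mod}(-,\mb F_p)$. That part is fine and matches the intended argument.

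The genuine gap is the degeneration step, which you explicitly leave open and for which your sketched route (a Koszul-type resolution of $R$ over a free $E(n)R$-algebra $F$, built inside the enriched category) points in the wrong direction. The worry that ``the Dyer--Lashof operations and the $E(n)_*$-coaction could contribute new derived indecomposables'' dissolves once you use the structure theorem for free algebras already quoted in the paper: the free $E(n)R$-algebra (indeed the free $AR$-algebra) on a comodule is \emph{free as a graded-commutative algebra}, on the admissible Dyer--Lashof monomials applied to generators. Hence a simplicial resolution $P_\bullet \to R$ by free $E(n)R$-algebras is, after forgetting, a degreewise-free simplicial resolution of the underlying augmented graded-commutative algebra, and such resolutions compute ordinary Andr\'e--Quillen homology. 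So the groups $\AQ_q(R)$ appearing in the $E_2$-term depend only on the underlying commutative algebra of $R$ (the extra structure only decorates them as $E(n)R$-modules), and no compatibility check beyond this is needed. When that underlying algebra is free graded-commutative, the classical vanishing $\AQ_q(R)=0$ for $q>0$ (e.g.\ because $R$ is itself cofibrant, so the constant resolution may be used) together with $\AQ_0(R)=QR$ concentrates the spectral sequence in the row $q=0$, giving $\Ext^s_{E(n)R\Mod}(QR,\mb F_p)\cong \Der^s_{E(n)R}(R,\mb F_p)$. In short: the hypothesis refers only to the underlying algebra precisely because the derived indecomposables only see the underlying algebra; the step you flagged as the main obstacle is closed by the free-algebra structure theorem, not by constructing a new resolution in the $E(n)R$-category.
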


In particular, these propositions can be applied to the $BP\langle
n\rangle$.

\begin{cor}
  For any $n \geq m$, there is an isomorphism
  \[
    \Der^s_{AR}(H_* BP\langle n\rangle, H_* BP\langle m\rangle) \cong
    \Ext^s_{E(m)R\Mod}(QH_* BP\langle n\rangle, \mb F_p).
  \]
\end{cor}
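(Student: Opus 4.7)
The plan is to chain together the two immediately preceding propositions, with the previously recorded identification $H_* BP\langle m\rangle \cong A_* \square_{E(m)_*} \mb F_p$ providing the bridge between them. The hypothesis $n \geq m$ enters only through the existence of the ring map $BP\langle n\rangle \to BP\langle m\rangle$ (collapsing the generators $v_{m+1}, \dots, v_n$), which on mod-$p$ homology gives the augmentation $H_* BP\langle n\rangle \to H_* BP\langle m\rangle \cong A_* \square_{E(m)_*} \mb F_p$ needed to invoke the first proposition.

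First I would apply the adjunction proposition with target $A_* \square_{E(m)_*} \mb F_p$: the forgetful functor from $AR$-algebras to $E(m)R$-algebras has the coextension $A_* \square_{E(m)_*} (-)$ as right adjoint, and both are exact, so derived functors match. This gives
\[
  \Der^s_{AR}\bigl(H_* BP\langle n\rangle,\ A_* \square_{E(m)_*} \mb F_p\bigr) \cong \Der^s_{E(m)R}\bigl(H_* BP\langle n\rangle,\ \mb F_p\bigr).
\]
Second I would apply the Grothendieck-style degeneration from the previous proposition. This requires verifying that the underlying graded-commutative algebra of $H_* BP\langle n\rangle$ is free. But $H_* BP\langle n\rangle \cong A_* \square_{E(n)_*} \mb F_p$ is a polynomial $\mb F_p$-algebra (at $p=2$, on the classes $\xi_i^2$ for $i \leq n+1$ together with $\xi_i$ for $i > n+1$; with the analogous polynomial description at odd primes), and forgetting further to an $E(m)R$-algebra leaves the underlying algebra untouched. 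Hence the Andr\'e--Quillen spectral sequence collapses and
\[
  \Der^s_{E(m)R}\bigl(H_* BP\langle n\rangle,\ \mb F_p\bigr) \cong \Ext^s_{E(m)R\Mod}\bigl(Q H_* BP\langle n\rangle,\ \mb F_p\bigr).
\]
Composing the two displays yields the claimed isomorphism.

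There is no real obstacle here; the corollary is a direct concatenation of the two propositions and the identification $H_*BP\langle m\rangle \cong A_* \square_{E(m)_*} \mb F_p$. The only points worth spelling out are (i) that the augmentation required by the first proposition is precisely what $n \geq m$ provides, via the canonical ring map $BP\langle n\rangle \to BP\langle m\rangle$, and (ii) that polynomiality of $H_* BP\langle n\rangle$ is what allows the second proposition to be used in its degenerate, isomorphism form rather than as a genuine spectral sequence.
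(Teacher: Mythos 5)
Your argument is exactly the paper's: the corollary is stated there as an immediate consequence of the two preceding propositions, using the identification $H_* BP\langle m\rangle \cong A_* \square_{E(m)_*} \mb F_p$, the augmentation coming from $BP\langle n\rangle \to BP\langle m\rangle$ for $n \geq m$, and the polynomiality of $H_* BP\langle n\rangle$ to collapse the Grothendieck spectral sequence. Your write-up just makes these points explicit, so it is correct and matches the paper's (implicit) proof.
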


For later calculations, it will be helpful to know the primitives in
$H_* BP$.
\begin{prop}
  The $ER$-module $QH_* BP$ has a basis consisting of the elements
  $[\xi_i^2]$ in degree $2^{i+1} - 2$ at the prime $2$, and $[\xi_i]$
  in degree $2p^i - 2$ at odd primes. These are acted on trivially by
  the Milnor primitives. The Dyer--Lashof operations satisfy
  $Q^{2^j - 2^i}[\xi_i^2]) = [\xi_j^2]$ for all $j > i$ at the prime
  $2$.
\end{prop}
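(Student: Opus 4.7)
My plan is to establish the three parts of the proposition---the underlying basis of $QH_*BP$, the vanishing of the Milnor primitives, and the Dyer--Lashof formula---in sequence, working inside the dual Steenrod algebra $A_*$ via Milnor's identifications $H_*BP \cong \mb F_2[\xi_1^2, \xi_2^2, \dots]$ at $p = 2$ and $H_*BP \cong \mb F_p[\xi_1, \xi_2, \dots]$ at odd primes. The basis statement is then immediate: the indecomposables of a free graded-commutative algebra are the $\mb F_p$-span of the polynomial generators, which gives the claimed classes $[\xi_i^2]$ and $[\xi_i]$ in the asserted degrees. The remaining content is entirely in the $ER$-module structure.

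To see that the Milnor primitives act trivially, I would exploit the fact that each $M_r$ is a derivation on mod-$p$ homology, so it suffices to evaluate it on the polynomial generators. At $p = 2$, squaring the Milnor coproduct $\psi(\xi_n) = \sum_{j+k=n} \xi_j^{2^k} \otimes \xi_k$ yields
\[
  \psi(\xi_i^2) = \sum_{j+k=i} \xi_j^{2^{k+1}} \otimes \xi_k^2,
\]
and the $M_r$-action picks out the coefficient of $\xi_r$ in the first tensor factor. Every factor $\xi_j^{2^{k+1}}$ appearing here is at least a second power of some $\xi_j$, hence never equal to the linear element $\xi_r$, so $M_r(\xi_i^2) = 0$. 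At odd primes, $M_r$ is the Hopf-algebra-dual of $\tau_r$, and the coaction on $\xi_i$ only involves $\xi$-terms in the first tensor factor---never any $\tau_r$---so again $M_r(\xi_i) = 0$. By the derivation property, $M_r$ vanishes on all of $H_*BP$, and a fortiori on $QH_*BP$.

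The Dyer--Lashof formula at $p = 2$ is the most demanding step. The Cartan formula in characteristic two, combined with graded-commutativity, pairs off cross-terms in squares to give the collapse identity
\[
  Q^{2s}(\xi_i^2) = Q^s(\xi_i)^2,
\]
which reduces the problem to computing $Q^s(\xi_i)$ modulo decomposables in $A_*$ for the appropriate value of $s$. For this I would draw on the classical computation of Dyer--Lashof operations on the polynomial generators of the dual Steenrod algebra due to Steinberger \cite{bmms-hinfty}: the base case $Q^{2^i}(\xi_i) \equiv \xi_{i+1}$ modulo decomposables follows from the unstability relation $Q^{|\xi_i|}(\xi_i) = \xi_i^2$ combined with the Nishida relations, and the general congruence is obtained by iteration. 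Squaring then promotes a congruence modulo $A_*$-decomposables to one modulo $H_*BP$-decomposables, since products of $\xi_k$'s square to products of $\xi_k^2$'s. The main obstacle is precisely this bookkeeping of correction terms in the Cartan expansion, and it is where the argument leans most heavily on the specific structure of the Dyer--Lashof algebra acting on $A_*$.
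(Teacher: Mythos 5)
Your overall route is the same as the paper's, which disposes of the proposition in three short steps: the identification of the indecomposables is immediate from $H_* BP \cong \mb F_2[\xi_1^2,\xi_2^2,\dots]$ (resp.\ $\mb F_p[\xi_1,\xi_2,\dots]$), the Milnor primitives act trivially, and the Dyer--Lashof formula is quoted from Steinberger's calculations in \cite[\S III]{bmms-hinfty}. Where you differ: for the Milnor primitives you expand the coaction on $\xi_i^2$ and pair against the dual basis, which is correct but heavier than the paper's one-line argument---$QH_* BP$ is concentrated in even degrees while each $M_r$ has odd degree, so the action must vanish---and the parity argument also sidesteps the left/right-coaction and conjugation conventions that your pairing computation has to get right. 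On the other hand, your Cartan-squaring reduction $Q^{2s}(\xi_i^2) = (Q^s\xi_i)^2$, together with the observation that squaring converts $A_*$-decomposables into $H_* BP$-decomposables, is a useful bridge to Steinberger that the paper leaves implicit.

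Two cautions. First, the exponent in the statement should be read as $2^{j+1}-2^{i+1}$; with the printed exponent $2^j-2^i$ the degrees do not match ($2^{i+1}-2+2^j-2^i \neq 2^{j+1}-2$ for $i<j$), and this is confirmed by the Koszul differential $dy_k = \sum_{j<k} R^{2^{k+1}-2^{j+1}+1}y_j$ used later. Taken literally, your reduction would require $Q^{2^{j-1}-2^{i-1}}\xi_i \equiv \xi_j$ modulo decomposables, which is impossible by degree count; the correct reduction is $Q^{2^{j+1}-2^{i+1}}(\xi_i^2) = \bigl(Q^{2^j-2^i}\xi_i\bigr)^2$, so the input you need is $Q^{2^j-2^i}\xi_i \equiv \xi_j$ modulo decomposables in $A_*$. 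Second, your parenthetical claim that this input is ``obtained by iteration'' from the base case $Q^{2^i}\xi_i \equiv \xi_{i+1}$ does not work as stated: iterating yields the composite $Q^{2^{j-1}}\cdots Q^{2^{i+1}}Q^{2^i}$, which is already an admissible monomial of length $j-i$ and is not the single operation $Q^{2^j-2^i}$, and no Adem manipulation converts the one statement into the other. For $j>i+1$ you need the fuller form of Steinberger's computation (or a separate Nishida-relation argument). Since you, like the paper, ultimately cite \cite[\S III]{bmms-hinfty} for this input, the logical structure of your proof stands, but the sketch of why the citation delivers the formula should not be mistaken for a proof.
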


\begin{proof}
  The identification of $H_* BP$ as $\mb F_2[\xi_1^2, \xi_2^2, \dots]$
  or $\mb F_p[\xi_1, \xi_2, \dots]$ makes the identification of the
  indecomposables clear. Since the generating classes are all in even
  degrees and the Milnor primitives are of odd degree, the Milnor
  primitives must act trivially. The remaining formula is a
  consequence of the calculations of Steinberger \cite[\S
  III]{bmms-hinfty}.
\end{proof}

\section{Koszul duality}

Now that we have reduced to calculations of $\Ext$ groups in a
category $E(n)R\Mod$ of graded modules, we should examine the
structure on these modules: it is now much simpler. Because
$E(n)_*$ is finite-dimensional, an $E(n)_*$-comodule structure is
precisely the same as an $E(n)^*$-module structure---an action of the
exterior algebra $\Lambda[M_0,\dots,M_n]$ generated by the Milnor
primitives. Thus, an $E(n)R$-module $N$ is almost the same as a
module over a graded ring: the graded ring with generators $M_k$ and
$Q^s$ subject to quadratic relations.

We now begin systematically specializing to the prime $2$, where these
relations take the following form:
\begin{align*}
  Q^r Q^s &= \sum \binom{i-s-1}{2i-r}Q^{r+s-i} Q^i &\text{if }r > 2s,\\
  M_r M_s &= M_s M_r &\text{if }r > s,\\
  M_r^2 &= 0, \\
  M_r Q^s &= (s-1)Q^{s-2^r+1} + \sum_{0 \leq k < r} Q^{s-2^r+2^k}
       M_k.
\end{align*}
However, $E(n)R$-modules satisfy one further \emph{instability
  relation}:
\[
  Q^s x = 0 \text{ if }s \leq |x|.
\]
The relations above mean that the operators on $E(n)R$-modules have a
canonical basis of monomials of the form $M_0^{\epsilon_0}
M_1^{\epsilon_1} \dots M_n^{\epsilon_n} Q^{a_1}\dots Q^{a_m}$, where
$a_i \leq 2a_{i+1}$ and $\epsilon_i \in \{0,1\}$. If $E(n)R$-modules
were actually modules over a graded ring, this graded ring qould be a
quadratic algebra and this basis would be a PBW-basis in the sense of
\cite{priddy-koszul}. Under these circumstances, there would be a Koszul
complex calculating $\Ext$, based on a ``Koszul dual'' quadratic
algebra with differential.

Despite the fact that we are not quite in the case of a quadratic
algebra, Senger showed that Priddy's proof still works. (This
technique was originally carried out for the nonnegative-degree
Dyer--Lashof algebra by Miller \cite{miller-delooping}).

\begin{prop}[{\cite{senger-obstr}}]
  For $N$ an $E(n)R$-module, there is a Koszul complex $C^*(N)$
  calculating $\Ext_{E(n)R\Mod}(N, \mb F_p)$. Let $N$ have basis
  $\{y^i\}$ with dual basis $\{y_i\}$. The Koszul complex $C^* (N)$
  has a basis consisting of monomials
  \[
    \lambda v_0^{k_0} v_1^{k_1} \dots v_n^{k_n} R^{a_1} R^{a_2} \dots
    R^{a_m} y^i
  \]
  where $k_i \geq 0$, $a_i \geq 2a_{i+1}$, and $-|y^i| + 2 \leq
  a_m$. We refer to such monomials as \emph{admissible}.
  Here the operators $v_i$ of total degree $2^{i+1}-2$ are dual
  to $M_i$, $R^a$ of total degree $-a$ is dual to $Q^{a-1}$, and $\lambda$
  of total degree $0$ is dual to the unit of $\mb F_2$. These are
  subject to the following relations:
  \begin{align*}
    R^a R^b &= \sum \binom{b-1-c}{a-2c} R^{a+b-c} R^c \text{ if }a <
              2b\\
    R^a v_i &= \sum_{i < k \leq n} v_k R^{a - 2^i + 2^k}\\
    v_i v_j &= v_j v_i
  \end{align*}
  The operators $R^a$ are also subject to the \emph{instability
    constraint}: the operator $R^a$ can only be applied to an element
  $z$ if $-|z| < a+1$.

  The differential in the Koszul complex is determined by relations
  \begin{align*}
    d R^a(x) &= (a+1)\sum_{0 \leq k \leq n} v_k R^{a + 2^k - 1}(x) + R^a(dx) \text{ and}\\
    d v_i x &= v_i dx,\\
    d \lambda x &= \begin{cases}
      \lambda R^1 x + \lambda(dx)&\text{if }|x| > 0,\\
      \lambda (dx) &\text{if }|x| \leq 0,
    \end{cases}
  \end{align*}
  and the fact that the differential on the $y^i$ is dual to
  the action of the $Q^s$ and $M_k$ on $N$.
\end{prop}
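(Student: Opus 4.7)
The plan is to follow the strategy of Priddy \cite{priddy-koszul} and Miller \cite{miller-delooping}, constructing an explicit resolution whose underlying graded module of ``inadmissible monomials'' has a bigraded filtration, and dualizing to obtain the claimed complex. The starting point is the observation already recorded: the operators $M_i$ and $Q^s$ acting on an $E(n)R$-module satisfy relations that are quadratic in the generators, with the single exception of the linear correction term $(s-1) Q^{s-2^r+1}$ in the Nishida relation and the instability constraint $Q^s x = 0$ for $s \leq |x|$. The canonical ordering (Milnor primitives on the left, decreasing with index; then an admissible composite $Q^{a_1}\cdots Q^{a_m}$ with $a_i \leq 2 a_{i+1}$) gives a PBW basis, exactly the setup required by Priddy's formalism.

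First I would build, for a free $E(n)R$-module on a basis $\{e^i\}$ with specified instability bounds, a bigraded free resolution $F_\bullet$ whose degree-$s$ summand is spanned by length-$s$ ``inadmissible'' prefixes applied to basis elements, with differential given by straightening the leftmost inadmissible pair using the defining quadratic relations. That the result is a resolution follows, as in \cite{priddy-koszul}, from exhibiting a contracting chain homotopy that peels off the leftmost operator symbol and uses the PBW straightening to push inadmissibilities to the right. For a general $E(n)R$-module $N$, tensor this ``bar-like'' resolution over the free algebra of operators with a presentation of $N$; equivalently, resolve the generators by a simplicial object as in the previous propositions. Applying $\Hom_{E(n)R\Mod}(-, \mb F_2)$ to this resolution yields a cochain complex whose cochain $p$-module is spanned precisely by duals of length-$p$ inadmissible prefixes, i.e. the admissible monomials $\lambda v_0^{k_0} \cdots v_n^{k_n} R^{a_1} \cdots R^{a_m} y^i$ described in the statement, with $a_i \geq 2 a_{i+1}$ (the dual of the admissibility condition flips the inequality) and with the dual instability constraint $-|z| < a+1$.

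Next I would compute the differential on this dual complex by transposing the straightening relations. The exterior square relations $M_r^2 = 0$ and $M_r M_s = M_s M_r$ dualize to the commutative polynomial identities in the $v_i$. The Adem relations dualize, by the Priddy--Koszul recipe with indices shifted by one, to the displayed identity for $R^a R^b$ when $a < 2b$. The Nishida relation dualizes in two parts: the quadratic piece $\sum Q^{s-2^r+2^k} M_k$ yields the commutation $R^a v_i = \sum_{k>i} v_k R^{a - 2^i + 2^k}$, while the linear correction $(s-1) Q^{s-2^r+1}$ is exactly what produces the internal differential term $(a+1)\sum_k v_k R^{a+2^k-1}$ on each $R^a$. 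Finally, the transition $\lambda \mapsto \lambda R^1$ on positive-degree classes encodes the dual of $Q^0 = 1$ on the bottom cell (with the case $|x|\leq 0$ reflecting the absence of such a unit via the instability bound).

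The main obstacle is verifying that these combined ingredients assemble consistently: one must check that $d^2 = 0$ not only up to quadratic relations but in the presence of the linear Nishida term (so the ``curvature'' from the $(a+1) v_k R^{a+2^k-1}$ piece cancels against contributions from straightening $R^a R^b$), and that the instability constraint $-|z| < a + 1$ is preserved under $d$ and matches the dual of $Q^s x = 0$ for $s \leq |x|$. Both checks are finite combinatorial identities of the same shape that Priddy verifies in the quadratic case; the correction terms are precisely accounted for because the Nishida relation and the instability conditions were already used to cut down the admissible basis. Once consistency and acyclicity of the resolution are established, the identification $H^s(C^*(N)) \cong \Ext^s_{E(n)R\Mod}(N,\mb F_2)$ follows formally from the bar-resolution construction.
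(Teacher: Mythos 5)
Your plan is essentially the same argument the paper points to: the paper gives no proof of this proposition itself, attributing it to Senger with the remark that Priddy's PBW/Koszul-duality proof (as adapted by Miller for the Dyer--Lashof algebra) still goes through despite the algebra of operations being only ``quadratic plus a linear Nishida term plus instability,'' and your sketch---inadmissible-monomial resolution, dualization of the Adem and Nishida relations, the linear term becoming the internal differential $(a+1)\sum_k v_k R^{a+2^k-1}$, and $\lambda$ recording the unit action of $Q^0$ on the coefficients---is exactly that strategy. The delicate verifications you flag ($d^2=0$ in the presence of the linear term, preservation of the instability constraint) are precisely the content of the cited work rather than something carried out in this paper.
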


The element $\lambda$, with its differential, appears precisely due to
the fact that $Q^0$ acts by the identity on the coefficient group $\mb
F_p$. However, because $\lambda$ preserves the differential
except on classes in total degree $t-s > 0$, it does not alter the
Goerss--Hopkins obstrunction groups that are important to us.

\begin{thm}
  For $N$ an $E(n)R$-module, the groups $\Der^s(N;\mb F_p)$ agree in
  total degree $t-s < 0$ for the two different actions of $Q^0$ on
  $\mb F_p$.
  
  For the zero action on $\mb F_p$, there is a Koszul complex $C^*(N)$
  calculating $\Ext_{E(n)R\Mod}(N, \mb F_p)$. Let $N$ have basis
  $\{y^i\}$ with dual basis $\{y_i\}$. The Koszul complex $C^* (N)$
  has a basis consisting of monomials
  \[
    v_0^{k_0} v_1^{k_1} \dots v_n^{k_n} R^{a_1} R^{a_2} \dots
    R^{a_m} y^i
  \]
  where $k_i \geq 0$, $a_i \geq 2a_{i+1}$, and $-|y^i| + 2 \leq
  a_m$. We refer to such monomials as \emph{admissible}.
  
  These are subject to the following relations:
  \begin{align*}
    R^a R^b &= \sum \binom{b-1-c}{a-2c} R^{a+b-c} R^c \text{ if }a <
              2b\\
    R^a v_i &= \sum_{i < k \leq n} v_k R^{a - 2^i + 2^k}\\
    v_i v_j &= v_j v_i
  \end{align*}
  The operators $R^a$ are also subject to the \emph{instability
    constraint}: the operator $R^a$ can only be applied to an element
  $z$ if $-|z| < a+1$.

  The differential in the Koszul complex is determined by relations
  \begin{align*}
    d R^a(x) &= (a+1)\sum_{0 \leq k \leq n} v_k R^{a + 2^k - 1}(x) + R^a(dx) \text{ and}\\
    d v_i x &= v_i dx
  \end{align*}
  and the fact that the differential on the $y^i$ is dual to
  the action of the $Q^s$ and $M_k$ on $N$.
\end{thm}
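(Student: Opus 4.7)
The plan is to deduce both claims from the Koszul complex of the previous proposition.

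For the second claim, the construction of $C^*(N)$ in the zero-$Q^0$ case follows the same Priddy--Miller--Senger Koszul-duality argument that established the previous proposition, now applied to the coefficient $\mb F_p$ on which $Q^0$ acts trivially instead of as the identity. The admissible monomials, the relations among $R^a$ and $v_i$, and the formulas for $dR^a$ and $dv_i$ are all unchanged. What goes away is the Koszul-dual unit-class $\lambda$ together with its contribution to the differential; it was present in the previous proposition precisely to encode the identity action of $Q^0$ on the coefficient, and removing that action removes $\lambda$ as well.

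For the first claim, I would identify the two Koszul complexes as the same underlying graded module of admissible monomials, equipped with two different differentials that differ only by the $\lambda$-twist. The identity-$Q^0$ differential on a class $x$ contains the extra term $\lambda R^1 x$ exactly when $|x| > 0$, and otherwise reduces to the zero-$Q^0$ differential. Hence the two differentials coincide on any chain group concentrated in total degree $\leq 0$.

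To convert this into an equality of cohomology in the range $t-s < 0$, I would invoke the Koszul bigrading convention in which the differential preserves the internal degree $t$ while raising the filtration $s$ by one, so it lowers the total degree $t-s$ by one. The cohomology group at bidegree $(t,s)$ is then computed from the three chain groups at bidegrees $(t,s-1)$, $(t,s)$, $(t,s+1)$, whose total degrees differ from $t-s$ by at most one. For $t-s < 0$ all three of these lie in total degree $\leq 0$, where the two differentials agree, so the $\Der^s$ groups agree in this range. The main obstacle is simply keeping the bigrading conventions straight and verifying that the $\lambda$-twist really is supported entirely in positive total degrees; no new Koszul-duality content is needed beyond what is already in the previous proposition.
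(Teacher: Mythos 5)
Your argument is essentially the paper's own: the paper justifies this theorem only by the remark that $\lambda$ (present because $Q^0$ acts as the identity on the coefficients) perturbs the differential only on classes of total degree $t-s>0$, together with Senger's Koszul-duality result, and your proposal spells out exactly that comparison --- identifying the two complexes by dropping the degree-zero $\lambda$ from each basis monomial and checking that the incoming and outgoing differentials at total degree $t-s<0$ only involve chains of total degree $\leq 0$, where the two differentials coincide. This is correct and matches the intended proof.
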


This gives us a large but \emph{explicit} cochain complex that
calculates our Goerss--Hopkins obstruction groups for the realization
of $BP\langle n\rangle$. The groups where potential obstructions live
are in total grading $-2$.

\section{Filtrations and stability}

The Koszul complex calculating these obstruction groups for
$BP\langle n\rangle$ is relatively large and involves complicated
interaction between the $R^a$ and $v_i$. In addition, the obstruction
groups that we described in the previous section depend on $n$ very
strongly. This is part and parcel of how we're working: we're using
the Adams filtration rather than the Postnikov filtration. For any two
different values of $n$, the Adams towers for $BP\langle n\rangle$ are
quite different as soon as one reaches filtration $1$, and so our
multiplicative obstruction theory doesn't really stabilize as $n$
grows.

Fortunately, both of these problems can be addressed to some
degree. There are several natural filtrations on this complex obtained
by assigning degrees to the $v_i$, and this allows us to calculate by
an inductive method.

\begin{prop}
  For any $0 \leq k \leq n$ and any $E(n)R$-module $N$, the Koszul
  complex has quotient complexes
  \[
    D^{k} = C^*(N) / (v_{k+1}, \dots, v_n)
  \]
  and for $0 \leq k < n$ there are Bockstein spectral sequences
  \[
    H^*(D^{k}) \otimes \mb F_p[ v_{k+1},\dots,v_m ] \Rightarrow H^*(D^m).
  \]
\end{prop}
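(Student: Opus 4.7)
My plan is to exhibit $D^m$ as a filtered cochain complex whose $I$-adic associated graded gives exactly the $E_1$-page in the statement, and then to argue convergence.

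First I will verify that $D^k$ is itself a well-defined cochain complex. The identity $d(v_i x) = v_i(dx)$ shows that multiplication by $v_i$ is a chain map, so the ideal $(v_{k+1},\dots,v_n)$ is $d$-stable as soon as it is actually a two-sided ideal. The straightening relation $R^a v_i = \sum_{i < j \le n} v_j R^{a - 2^i + 2^j}$ shows that putting any product in admissible form only introduces new $v_j$'s of strictly larger index, so the ideal is preserved; hence $D^k$ is a complex, and there is a quotient map of complexes $D^m \twoheadrightarrow D^k$ for every $0 \le k \le m \le n$.

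Next, fix $0 \le k < m \le n$ and set $I = (v_{k+1},\dots,v_m) \subset D^m$. Because admissibles place all $v$'s on the left, $D^m$ is free as a module over $\mb F_p[v_{k+1},\dots,v_m]$ on the sub-basis of admissibles containing no $v_i$ with $i > k$, and this sub-basis maps bijectively onto the basis of $D^k$ under $D^m \twoheadrightarrow D^k$. This yields a graded $\mb F_p[v_{k+1},\dots,v_m]$-module isomorphism
\[
  D^m \;\cong\; D^k \otimes_{\mb F_p} \mb F_p[v_{k+1},\dots,v_m].
\]
I then filter $D^m$ by $F^q D^m = I^q D^m$; since each $v_i$ commutes with $d$, this is a decreasing filtration of subcomplexes.

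To identify the $E_1$-page I split the differential as
\[
  dR^a(x) = (a+1)\sum_{j=0}^{k} v_j R^{a+2^j-1}(x) + (a+1)\sum_{j=k+1}^{m} v_j R^{a+2^j-1}(x) + R^a(dx).
\]
The middle sum lies in $F^1 = I \cdot D^m$ and so vanishes on the associated graded; the remaining terms, together with $d(v_i x) = v_i(dx)$ for $i > k$, show that the induced differential on $\mathrm{gr}^\bullet D^m \cong D^k \otimes \mb F_p[v_{k+1},\dots,v_m]$ is $d_{D^k} \otimes \mathrm{id}$. Therefore
\[
  E_1 \;=\; H^*(D^k) \otimes_{\mb F_p} \mb F_p[v_{k+1},\dots,v_m],
\]
as claimed. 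Convergence is automatic: every filtering variable $v_i$ has index $i \ge k+1 \ge 1$ and hence strictly positive total degree $2^{i+1}-2$, so in any fixed bidegree only finitely many powers $I^q$ contribute.

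The one structural point requiring genuine care is the freeness of $D^m$ over $\mb F_p[v_{k+1},\dots,v_m]$. Although the admissible basis makes this visually immediate, it ultimately rests on the one-sided character of the straightening rule $R^a v_i = \sum_{j > i} v_j R^{\cdots}$: because it never produces $v_j$ with $j \le i$, no hidden cancellations occur on the associated graded, and the identification of $\mathrm{gr}^\bullet D^m$ with $D^k \otimes \mb F_p[v_{k+1},\dots,v_m]$ is faithful.
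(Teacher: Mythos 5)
Your argument is correct and is exactly the route the paper has in mind (the paper states this proposition without proof, remarking only that the filtrations come from ``assigning degrees to the $v_i$''): your $I$-adic filtration by the high $v$'s is that filtration, and your key observations---that the straightening rule $R^a v_i = \sum_{j>i} v_j R^{a-2^i+2^j}$ only raises indices, so the quotient complexes $D^k$ and the freeness of $D^m$ over $\mb F_p[v_{k+1},\dots,v_m]$ are legitimate, with associated graded differential $d_{D^k}\otimes\mathrm{id}$---are precisely the details the paper leaves implicit. Convergence is fine as you say; note it also follows even more directly from the fact that a monomial in cohomological degree $s$ contains at most $s$ operators, so the filtration is finite in each bidegree.
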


In terms of $\Ext$-groups, these have concrete interpretations. For
any $k \leq n$ and any $E(n)R$-module $N$, we can view $N$ as an
$E(k)R$-module, and $D^{k}$ is a Koszul complex calculating $\Ext$ in
$E(k)R$-modules. Thus, this can be viewed as a collection of Bockstein
spectral sequences
\[
  \Ext_{E(k)R\Mod}(N,\mb F_p)\otimes \mb F_p[v_{k+1},\dots,v_m] \Rightarrow
  \Ext_{E(m)R\Mod}(N,\mb F_p).
\]

We now specialize to what happens when we consider indecomposables.

Consider the sequence of maps
\[
  H_* BP \to \dots \to H_* BP\langle 2\rangle \to H_* BP\langle
  1\rangle \to H_* BP\langle 0\rangle \to H_* BP\langle -1\rangle.
\]
This creates an array of Ext-groups:
\[
  \xymatrix{
    \Ext_{AR}(H_* BP\langle -1\rangle, H_* BP\langle -1\rangle) \ar[d]\\
    \Ext_{AR}(H_* BP\langle 0\rangle, H_* BP\langle -1\rangle) \ar[d]&
    \Ext_{AR}(H_* BP\langle 0\rangle, H_* BP\langle 0\rangle) \ar[l] \ar[d]\\
    \Ext_{AR}(H_* BP\langle 1\rangle, H_* BP\langle -1\rangle) \ar[d]&
    \Ext_{AR}(H_* BP\langle 1\rangle, H_* BP\langle 0\rangle) \ar[l] \ar[d]&
    \Ext_{AR}(H_* BP\langle 1\rangle, H_* BP\langle 1\rangle) \ar[l] \ar[d]\\
    \vdots & \vdots & \vdots \\
  }
\]
After our chain of isomorphisms, these can be re-identified:
\[
  \xymatrix{
    \Ext_{E(-1)R\Mod}(QH_* BP\langle -1\rangle,\mb F_p) \ar[d]\\
    \Ext_{E(-1)R\Mod}(QH_* BP\langle 0\rangle, \mb F_p) \ar[d]&
    \Ext_{E(0)R\Mod}(QH_* BP\langle 0\rangle, \mb F_p) \ar[l] \ar[d]\\
    \Ext_{E(-1)R\Mod}(QH_* BP\langle 1\rangle, \mb F_p) \ar[d]&
    \Ext_{E(0)R\Mod}(QH_* BP\langle 1\rangle, \mb F_p) \ar[l] \ar[d]&
    \Ext_{E(1)R\Mod}(QH_* BP\langle 1\rangle, \mb F_p) \ar[l] \ar[d]\\
    \vdots & \vdots & \vdots
  }
\]
Each Bockstein spectral sequence requires one of the terms in this
diagram and converges to the one immediately to its right. Because the
degrees of the elements in the Koszul complex in any column involve
only a finite list $v_0,\dots,v_n$ of positive-degree operators, and
$QH_* BP\langle n\rangle\to QH_* BP\langle m\rangle$ is always an
isomorphism in large degrees, the vertical towers \emph{do} stabilize
to the tower of groups
\[
  \Ext_{E(-1)R\Mod}(QH_* BP, \mb F_p) \leftarrow
  \Ext_{E(0)R\Mod}(QH_* BP, \mb F_p) \leftarrow
  \Ext_{E(1)R\Mod}(QH_* BP, \mb F_p) \leftarrow \dots,
\]
or equivalently the tower
\[
  \Ext_{AR}(H_* BP, H_* BP\langle -1\rangle) \leftarrow
    \Ext_{AR}(H_* BP, H_* BP\langle 0\rangle)\leftarrow
    \Ext_{AR}(H_* BP, H_* BP\langle 1\rangle) \leftarrow \dots
\]
whose limit is roughly $\Ext_{AR}(H_* BP, H_* BP)$.\footnote{Modulo a
  $\lim^1$-issue.}

Since we're interested in $BP$ anyway, computing in this grid gives as
a workaround for Koszul complex problems and convergence problems with
$ER$-algebras that don't show up for $E(n)R$-algebras. In practice
this means that we will be calculating
\[
  \Ext_{AR}(H_* BP, H_* BP\langle n\rangle) \cong \Ext_{E(n)R\Mod}(Q
  H_* BP, \mb F_p).
\]
and calculating Bockstein spectral sequences
\[
  \Ext_{AR}(H_* BP, H_* BP\langle n\rangle) \otimes \mb F_p[v_{n+1}] \Rightarrow 
  \Ext_{AR}(H_* BP, H_* BP\langle n+1\rangle)
\]
to get at the limiting value.

\section{The critical group and secondary operations}

We now consider the critical group: the first Goerss--Hopkins
obstruction group
\[
  \Ext^3_{E(n)R\Mod}(QH_* BP,\Omega \mb F_2)
\]
that could support an obstruction class. The Koszul complex, in this
degree, had a basis of those monomials of the form
\[
  v_i v_j v_k y_m, v_i v_j R^a y_m, v_i R^a R^b y_m, \text{ and }R^a
  R^b R^c y_m
\]
of total degree $-2$, where $y_k$ is dual to $[\xi_k^2] \in QH_*
BP$. Our calculations in the remainder of this paper will determine
exactly what has survived to $\Ext$ and what has not. The first type
supports a differential if $m > 1$ and is usually in degree greater
than $-2$ if $m = 1$; the second type only survives if it is in odd
total degree; the fourth type is always in a large negative
degree. This leaves us only with the third type to carefully check.
\begin{thm}
  The first obstruction group
  \[
    \Ext^3_{E(n)R\Mod}(QH_* BP,\Omega \mb F_2)
  \]
  has a basis of classes of the following forms: the class $v_0^3 y_1$,
  and those of form $v_i R^a R^b y_1$ where $i \geq 3$, $a$ and $b$
  are odd, $b \geq 7$, $a > 2b$, and $a+b = 2^{i+1} - 2$.
\end{thm}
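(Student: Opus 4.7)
The plan is to enumerate the admissible Koszul monomials of filtration $3$ and total degree $-2$, and for each of the four shapes $v_iv_jv_ky_m$, $v_iv_jR^ay_m$, $v_iR^aR^by_m$, $R^aR^bR^cy_m$ determine which are cocycles modulo coboundaries. With the gradings $|v_i| = 2^{i+1}-2$, $|R^a| = -a$, and $|y_m| = 2 - 2^{m+1}$, three of the four shapes can be dismissed up front, as indicated in the paragraph preceding the theorem: for $R^aR^bR^cy_m$, admissibility $a \geq 2b \geq 4c$ combined with the innermost instability $c \geq 2^{m+1}-2$ pushes the total degree well below $-2$ for every $m \geq 1$; and for $v_iv_jR^ay_m$, the total-degree equation forces $a = 2^{i+1}+2^{j+1}-2^{m+1}$, which is always even, whereupon a direct differential computation shows no even-total-degree cocycle of this shape survives.

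For the triple-$v$ type $v_iv_jv_ky_m$, the total-degree equation reads $2^{i+1}+2^{j+1}+2^{k+1} = 2^{m+1} + 2$. At $m = 1$ this has the unique solution $i = j = k = 0$, giving the candidate $v_0^3 y_1$; since no Dyer--Lashof operation in $QH_*BP$ produces $[\xi_1^2]$ (only $[\xi_j^2]$ with $j \geq 2$ lies in the image of some $Q^{2^j-2^i}$), one has $dy_1 = 0$, so $v_0^3 y_1$ is a cocycle, and it is manifestly not a boundary from filtration $2$. For $m > 1$, the nontrivial differential $dy_m = \sum_{i'<m} R^{2^m - 2^{i'} + 1} y_{i'}$ yields $d(v_iv_jv_ky_m) = v_iv_jv_k\,dy_m \neq 0$ in most cases, so these candidates fail to be cocycles.

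The bulk of the work is the analysis of type $v_iR^aR^by_m$. Expanding
\[
d(v_iR^aR^by_m) = v_i(a+1)\sum_k v_kR^{a+2^k-1}R^by_m + v_i(b+1)\sum_{k,\,\ell>k} v_\ell R^{a-2^k+2^\ell}R^{b+2^k-1}y_m + v_iR^aR^b\,dy_m
\]
via $dR^a(x) = (a+1)\sum_k v_kR^{a+2^k-1}(x) + R^a(dx)$ and the commutation $R^av_k = \sum_{\ell>k}v_\ell R^{a-2^k+2^\ell}$, one reads off that at $m = 1$ (where $dy_1 = 0$) the cocycle condition forces $(a+1) \equiv (b+1) \equiv 0 \pmod 2$, i.e., $a$ and $b$ both odd; admissibility $a \geq 2b$ then strengthens to $a > 2b$, since $2b$ is even. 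The total-degree constraint is $a + b = 2^{i+1} - 2$. To identify the boundaries I would next compute $d(v_iR^{a'}y_m)$ for $a'$ odd and $m \geq 2$, which equals $v_iR^{a'}dy_m = \sum_{i'<m}v_iR^{a'}R^{2^m-2^{i'}+1}y_{i'}$; this is a single $y_1$ term when $m = 2$ (forcing $b = 3$ to be a pure boundary), and spreads across multiple $y_{i'}$ for $m \geq 3$, leaving the $y_1$-components for $b = 2^m - 1 \geq 7$ as surviving cohomology representatives. The restrictions $b \geq 7$ and $i \geq 3$ then come from combining these boundary computations with the admissibility $a > 2b$ and Adem-relation manipulations on $R^{a''}R^{b''}$, and the vanishing of all $m > 1$ contributions in total degree $-2$ is established by similar explicit boundary identifications with filtration-$2$ cochains $v_iR^{a'}y_{m''}$ with $m'' > m$.

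The main obstacle will be the type-$3$ boundary bookkeeping: the parity coefficients $(a+1)$ and $(b+1)$, the commutation $R^av_k = \sum_{\ell>k}v_\ell R^{a-2^k+2^\ell}$, the Adem relations for non-admissible $R^{a''}R^{b''}$, and the multi-target structure of $dy_m$ all interact, so ruling out the low-$b$ and small-$i$ cases and showing the vanishing of every $m > 1$ contribution require constructing explicit filtration-$2$ witnesses compatible with the instability and admissibility constraints.
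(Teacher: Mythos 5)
Your overall strategy---computing the single bidegree (filtration $3$, total degree $-2$) of the Koszul complex directly, rather than the paper's route of computing everything in weight $\le 2$ via the $n=-1$ calculation followed by the $v_0$-, $v_1$-, $v_2$-Bockstein spectral sequences and the degeneration of the higher Bocksteins---is legitimate in principle, and several of your reductions (the degree count disposing of $R^aR^bR^cy_m$, the identification of $v_0^3y_1$ as a non-bounding cocycle, the parity argument showing $v_iR^aR^by_1$ with $a,b$ odd is a cocycle) match what the paper's computation yields. But the part you defer is the actual content of the theorem, and your sketch of it contains a concrete error: you use $dy_m=\sum_{i'<m}R^{2^m-2^{i'}+1}y_{i'}$, whereas the differential is $dy_m=\sum_{j<m}R^{2^{m+1}-2^{j+1}+1}y_j$, so $dy_2=R^5y_1$. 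The filtration-$2$ coboundaries $d(v_iR^{a}y_2)=v_iR^{a}R^5y_1$ (for $a$ odd) kill the $b=5$ classes; this, not ``$b=3$ is a pure boundary,'' is why the answer starts at $b\ge 7$. Moreover your conclusion that the survivors are those with $b=2^m-1\ge 7$ contradicts the theorem you are proving: every odd $b\ge 7$ occurs, e.g.\ $b=9$ in the critical class $v_4R^{21}R^9y_1$. Determining exactly which odd--odd monomials become coboundaries requires the Adem-relation analysis the paper carries out in its $n=-1$ section (the classes dying are the admissibles ending in $R^5$, $R^9R^4$, $R^{17}R^8R^4,\dots$, of which only the $R^5$ family has $b$ odd), and the proposal does not engage with it.

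There is also a structural gap: cocycles and coboundaries in the critical bidegree cannot be settled monomial-by-monomial or shape-by-shape. The kernel can contain cross-shape linear combinations (the paper's own hidden-extension lemmas rest on two-term cycles such as $R^{a_1-1}R^{a_2}x+R^{a_1}R^{a_2-1}x$), the differentials of your type-$1$ and type-$2$ candidates both have components of shape $v\,v\,v\,R\,y$ which could cancel, and the images of the three filtration-$2$ shapes interleave; so ``$d(v_iv_jv_ky_m)\neq 0$ in most cases'' and ``a direct differential computation shows no even-total-degree cocycle of this shape survives'' assert precisely the bookkeeping that constitutes the proof. You would further need to show that the listed classes are nonzero in $\Ext$ for every $n$; in the paper this comes from lifting all of weight $\le 2$ to permanent cycles and the resulting degeneration of the $v_3,\dots,v_n$ Bockstein spectral sequences. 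In short, the local approach could be made to work, but completing it amounts to reproducing, in the relevant degrees, the weight-filtered computation the paper does globally, and as written the proposal stops short of that and misidentifies the boundary structure.
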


\begin{cor}
  The minimal value of $i$ such that an admissible monomial $v_i R^a
  R^b y_1$ appears is when $i=4$; the only monomials of this type are
  $v_4 R^{23} R^7 y_1$ and $v_4 R^{21} R^9 y_1$.
\end{cor}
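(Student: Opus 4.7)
The plan is to simply enumerate the constraints from the preceding theorem. A class of the form $v_i R^a R^b y_1$ appears in the first obstruction group when $i \geq 3$, both $a$ and $b$ are odd, $b \geq 7$, $a > 2b$, and $a+b = 2^{i+1}-2$. The strategy is to check the values $i=3$ and $i=4$ in turn; any solution for $i \geq 5$ is automatically ruled out as not minimal.

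First I would dispose of $i=3$: the equation $a+b = 2^{4} - 2 = 14$ together with $b \geq 7$ forces $a \leq 7$, while the admissibility condition $a > 2b \geq 14$ forces $a \geq 15$, a contradiction. Next I would treat $i = 4$, where $a + b = 2^5 - 2 = 30$. Substituting $a = 30 - b$ into $a > 2b$ yields $30 > 3b$, hence $b < 10$. Since $b$ is odd and $b \geq 7$, the only possibilities are $b = 7$ and $b = 9$, producing the pairs $(a,b) = (23,7)$ and $(a,b) = (21,9)$ respectively; in each case $a$ is automatically odd since $30$ is even, and one checks directly that $23 > 14$ and $21 > 18$. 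These give exactly the two admissible monomials $v_4 R^{23} R^7 y_1$ and $v_4 R^{21} R^9 y_1$ named in the statement.

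There is no real obstacle here: the corollary is an elementary consequence of the combinatorial constraints packaged in the preceding theorem, and the only thing to verify is that the excluded case $i=3$ has no solutions and that $b=11$ (the next odd value) is incompatible with $a > 2b$ when $i=4$. Both checks are immediate arithmetic.
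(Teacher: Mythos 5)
Your proposal is correct and follows essentially the same route as the paper, which states the corollary as an immediate arithmetic consequence of the preceding theorem's constraints ($i \geq 3$, $a,b$ odd, $b \geq 7$, $a > 2b$, $a+b = 2^{i+1}-2$). Your case checks ruling out $i=3$ and enumerating $b \in \{7,9\}$ for $i=4$ are exactly the intended verification.
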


It falls to us now to determine what these obstructions mean. In the
Koszul complex, these are basis elements that detect the elements
$[M_4 | Q^{22} | Q^6 | \xi_1^2]$ and $[M_4 | Q^{20} | Q^8 | \xi_1^2]$
in a bar resolution for $QH_* BP$ in $E(n)R\Mod$. How do we interpret
these?

It it easiest to describe these in a constructive fashion. Recall that
the Goerss--Hopkins method is trying to construct $BP$ or $BP\langle
n\rangle$ by starting with the sphere $\mb S$ and iteratively
killing off maps from finite complexes $Z$. Let us examine how the
first obstruction might detect something:
\begin{enumerate}
\item We start with the sphere $\mb S$, whose homology is $\mb F_2$,
  and call it $R_0$.
\item The homology is not yet correct: it is missing $\xi_1^2$, which
  is connected to the unit by the Steenrod operation $\Sq^2$. We
  attach an $E_\infty$-cell in a way that produces it. In this case,
  we can do this by coning off the map $\eta\co S^1 \to \mb S$,
  forming the pushout of $\mb P(CS^1) \leftarrow \mb P(S^1) \to \mb S$
  in $E_\infty$ ring spectra to construct $R_1$.
\item We are no longer missing any homology classes, but the homology
  is still not correct: there are relations that are not yet
  satisfied. There is a relation involving $Q^6$ (this relation
  turns out to be $Q^6(\xi_1^2) = \xi_1^8$ in $H_8 BP$), and we
  know that $Q^n(\xi_1^2) = 0$ for $n$ odd, and so on. We, again, can
  cone off some finite complex in a way that imposes these relations
  on homology. Possibly the element $Q^6(\xi_1^2) + \xi_1^8$ lifts to
  the homotopy of $R_1$ in which case we can cone off a map
  $S^8 \to R_1$, but more likely we can't (e.g. if this class supports
  Steenrod operations) and we need to map in a finite complex
  $Z \to R_1$ that hits this class in homology, and the Steenrod
  operations on it. We construct the pushout
  $\mb P(CZ) \leftarrow \mb P(Z) \to R_1$ to cone this map off and
  construct $R_2$.
\item We have correctly imposed all the relations that should hold
  now, but the homology is still not correct: new classes have
  appeared in homology when we imposed these relations. These come
  from ``relations between relations'' and are called secondary
  homology operations. For example, there is an Adem relation saying
  $Q^{22} Q^6(y) = Q^{17} Q^{11}(y) + Q^{15} Q^{13}(y)$ for elements
  $y \in H_2$, the Cartan formula implies $Q^{22} (\xi_1^8) = 0$, and
  there are relations $Q^{11}(\xi_1^2) = Q^{13}(\xi_1^2) = 0$. The
  nontriviality of $R^{23} R^7 y_1$ implies that these relations
  between relations glue together into a potential secondary
  operation. This new secondary operation
  $\theta(\xi_1^2) \in H_{31} BP$ needs to be eliminated because
  $H_{31} BP$ is supposed to be zero. We map in a finite complex
  $W \to R_2$ to clamp down on this spurious new class
  $\theta(\xi_1^2)$ (and the Steenrod operations on it), coning it off
  to construct a new ring $R_3$.
\item Now we arrive at our potential problem. When we eliminated
  $\theta(\xi_1^2)$, we were forced to killed off the Steenrod
  operations on it, such as $M_4 \theta(\xi_1^2) \in
  H_0(BP)$. However, it's possible that $M_4 \theta(\xi_1^2)$ is the
  unit $1 \in H_0(BP)$, in which case we have been forced to make $H_*
  R_3$ into the zero ring.
\end{enumerate}

For this particular problem, which would be detected by the
obstruction class $v_4 R^{23} R^7 y_1$, this doesn't really
happen. The reason is not particularly interesting: \emph{all the
  relations we used in this argument also take place in $H_* MU$}, and
we know that $MU$ admits an $E_\infty$ ring structure. This secondary
operation we've written down can't actually satisfy
$M_4 \theta(\xi_1^2) = 1$ or this argument would equally well exclude
the existence of an $E_\infty$ ring structure on $MU$.

Fortunately, we have another potential basis element
$v_4 R^{21} R^9 y_1$, which would detect some kind of problem
involving a relation satisfied by $Q^8 (\xi_1^2)$ in $H_* BP$, the
Adem relation for $Q^{20} Q^8$, and the Milnor primitive $M_4$. It
gives us a place to look, and looking here leads us to the start of
\cite{secondary}.

\section{Calculation setup for $BP$}

It's time to get down to the business of
calculation.\footnote{The calculations we will describe in the
  following sections determine many more of the Goerss--Hopkins
  obstruction groups than the critical group that we need. We have
  this calculation available, and it helps to illuminate the critical
  group by examining the algebraic structure in the large.}
In this section we'll begin the process of calculating $\Ext_{AR}(H_*
BP, H_* BP\langle n\rangle)$ inductively at the prime $2$. This uses
knowledge of the structure of the indecomposables $QH_* BP$ as an
$ER$-module. We begin by writing down the Koszul complexes that
calculate $\Ext$.

\begin{prop}
  Let $y_k$ be dual to $[\xi_k^2] \in QH_* BP$. The Koszul complex
  calculating $\Ext_{E(n)R\Mod}(QH_* BP, \mb F_2)$ has a basis of
  monomials
  \[
    v_0^{k_0} v_1^{k_1} \dots v_n^{g_k} R^{a_1} R^{a_2} \dots R^{a_m} y_k
  \]
  ranging over $k_i$, $a_i$, and $k$ such that $k_i \geq 0$, $k \geq
  1$, $a_i \geq 2 a_{i+1}$, and $2^{k+1} \leq a_m$.

  The differential is determined by
  \begin{align*}
    dy_k &= \sum_{j < k} R^{2^{k+1} - 2^{j+1} + 1} y_j,\\
    d v_i x &= v_i dx,\\
    d R^a(x) &= \begin{cases}
      R^a(dx) &\text{if $a$ is odd,}\\
      \sum_{0 \leq k \leq n} v_k R^{a + 2^k - 1}(x) + R^a(dx)&\text{if
        $a$ is even,}
      \end{cases}\\
  \end{align*}
  and the relation
  \[
    R^a v_i x = \sum_{i < j \leq n} v_j R^{a - 2^i + 2^j} x.
  \]
\end{prop}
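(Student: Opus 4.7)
The plan is to specialize the general Koszul complex from the previous theorem to the $E(n)R$-module $N = QH_* BP$ at the prime $2$, using the structure identified in the earlier proposition on primitives of $H_* BP$: a basis $\{[\xi_k^2]\}_{k \geq 1}$ in degrees $2^{k+1} - 2$, trivial action of the Milnor primitives, and Steinberger's formulas for the Dyer--Lashof action. The statement is essentially a bookkeeping exercise on top of these inputs.

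First I would transcribe the list of admissible monomials from the general theorem, letting $y_k$ denote the basis element dual to $[\xi_k^2]$. The instability bound $a_m \geq -|y^i| + 2$ specializes to $a_m \geq 2^{k+1}$ since $|y_k| = -(2^{k+1}-2)$. The relations $R^a v_i = \sum_{i < j \leq n} v_j R^{a-2^i+2^j}$ and $d(v_i x) = v_i\,dx$ carry over verbatim from the general theorem.

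For the differential on $R^a(x)$, the coefficient $(a+1)$ appearing in the general formula reduces mod $2$ to $0$ when $a$ is odd and to $1$ when $a$ is even, immediately producing the stated case split. The only substantive step is pinning down the differential on the generators $y_k$: by the general theorem this is dual to the action of the $Q^s$ and $M_j$ on $QH_* BP$. Since Milnor primitives act trivially, only the Dyer--Lashof contribution survives, and by Steinberger's relation $Q^{2^{k+1} - 2^{j+1}}[\xi_j^2] = [\xi_k^2]$ for $j < k$, together with the convention $R^a \leftrightarrow Q^{a-1}$, dualization yields $dy_k = \sum_{j<k} R^{2^{k+1} - 2^{j+1} + 1} y_j$.

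The main point requiring care is to confirm that this is all of $dy_k$: I need to verify that the only elements of $QH_* BP$ hit by a Dyer--Lashof operation applied to $[\xi_j^2]$ and landing on $[\xi_k^2]$ come from the Steinberger operation above, with no additional iterated or non-admissible contributions showing up in the dual. This is immediate from the fact that any other admissible Dyer--Lashof monomial on $[\xi_j^2]$ in $H_* BP$ is either decomposable (and so vanishes in the indecomposables module) or lands in a class outside the basis $\{[\xi_k^2]\}$. Once this verification is in hand, every other piece is direct specialization.
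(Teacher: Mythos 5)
Your proposal is correct and follows the same route the paper intends: the proposition is a direct specialization of the general Koszul complex theorem to $N = QH_* BP$, using the stated basis $[\xi_k^2]$, the vanishing of the Milnor primitive action, Steinberger's $Q$-action on indecomposables, and the mod-$2$ reduction of the coefficient $(a+1)$. Your one point of care---that $dy_k$ has no further terms---is handled exactly as you say, since $QH_* BP$ is one-dimensional in each degree $2^{k+1}-2$ and zero otherwise, so only the operations $Q^{2^{k+1}-2^{j+1}}$ can contribute.
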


We can already draw several conclusions from this calculation.
\begin{cor}
  Any admissible monomials of the form $v_0^{k_0} v_1^{k_1} \dots
  v_n^{k_n} R^{a_1} R^{a_2} \dots R^{a_m} y_1$, where the
  $a_1,\dots,a_m$ are odd, are permanent cycles in the $E(n)$-Koszul
  complex.
\end{cor}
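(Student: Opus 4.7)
The plan is to observe that, under the hypotheses of the corollary, every factor appearing to the left of $y_1$ in the monomial is a \emph{chain map}, and that $y_1$ itself is already a cycle. Combined, these two facts force the whole monomial to be a cycle, with nothing to prove beyond citing the explicit formulas of the preceding proposition.

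First I would record that $dy_1 = 0$: the formula $dy_k = \sum_{j<k} R^{2^{k+1}-2^{j+1}+1} y_j$ is an empty sum when $k=1$, because the basis of the Koszul complex is indexed by $k \geq 1$. Next I would invoke the two ``chain map'' clauses in the differential: $d v_i x = v_i dx$ always, and $d R^a(x) = R^a(dx)$ precisely when $a$ is odd. Iterating these past every factor in the given monomial yields
\[
  d\bigl(v_0^{k_0} \cdots v_n^{k_n} R^{a_1} \cdots R^{a_m} y_1 \bigr) = v_0^{k_0} \cdots v_n^{k_n} R^{a_1} \cdots R^{a_m}(dy_1) = 0.
\]

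There is no genuine obstacle here. The only fiddly point is to check that each intermediate suffix $R^{a_j} \cdots R^{a_m} y_1$ remains an admissible basis element, so that the chain-map clauses can be applied term-by-term without needing to rewrite anything through the reordering relation $R^a v_i = \sum_{j > i} v_j R^{a - 2^i + 2^j}$. This is automatic: admissibility of the $R$-string ($a_i \geq 2a_{i+1}$ and $a_m \geq 2^{k+1}$ with $k=1$) is inherited by suffixes ending in $y_1$, and no $v_i$ ever sits to the right of an $R^{a_i}$ in the starting monomial, so the reordering relation is never triggered. Everything else is covered by the standing hypothesis that every $a_i$ is odd.
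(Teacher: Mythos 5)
Your argument is correct and is exactly the reasoning the paper intends: the corollary is an immediate consequence of the differential formulas in the preceding proposition, since $dy_1=0$ (empty sum for $k=1$), $d$ commutes with each $v_i$, and $d$ commutes with $R^a$ whenever $a$ is odd, so the differential passes through the whole monomial and vanishes. Your added check that suffixes stay admissible and that no $R^a v_i$ reordering is ever needed is a harmless bit of extra care, not a departure from the paper's approach.
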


\begin{cor}
  The Koszul complex has a filtration under ``degree in
  $R$'': if we say that an admissible monomial $v_0^{k_0} v_1^{k_1}
  \dots v_n^{k_n} R^{a_1} R^{a_2} \dots R^{a_m} y_k$ has weight $m$,
  then the Koszul differential $d$ increases weight, and preserves it
  on classes ending in $y_1$.
\end{cor}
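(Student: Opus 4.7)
The plan is to check that each defining relation of the Koszul complex preserves the number of $R$-symbols, then verify by a short induction that every term of $d$ has at least as many $R$'s as its input, with equality whenever the monomial ends in $y_1$.

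First I would observe that the three relations
\[
R^a R^b = \sum_c \binom{b-1-c}{a-2c} R^{a+b-c} R^c,\quad R^a v_i = \sum_{i<j\le n} v_j R^{a-2^i+2^j},\quad v_i v_j = v_j v_i,
\]
contain respectively $2$, $1$, and $0$ occurrences of $R$ on each side. Hence the weight $m$ of an admissible monomial is well defined, and any reduction of a non-admissible expression to the admissible basis is weight-homogeneous. In particular, a factor $v_k$ produced in the middle of an $R$-string may be migrated to the front without changing the $R$-count.

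The main computation is then a single induction on $m$. The base case is the weight-$0$ monomial $v_0^{k_0}\cdots v_n^{k_n} y_k$, where $dy_k = \sum_{j<k} R^{2^{k+1}-2^{j+1}+1} y_j$ has weight $1$ (and vanishes for $k=1$). For the inductive step with $x$ of weight $m-1$, using $d v_i x = v_i dx$ to pull out the $v$-tail gives
\[
dR^{a_1}(x) = R^{a_1}(dx) + \varepsilon(a_1) \sum_{0 \leq k \leq n} v_k R^{a_1 + 2^k - 1}(x),
\]
where $\varepsilon(a_1) = 0$ for odd $a_1$. By induction $R^{a_1}(dx)$ has weight $\ge m$, and the second summand has weight exactly $m$; so $d$ weakly raises the weight in general. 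When the original monomial ends in $y_1$, the induction bottoms out at $dy_1 = 0$, so the only surviving contributions to $d$ come from replacing a single even-indexed $R^{a_i}$ by $v_k R^{a_i + 2^k - 1}$, leaving exactly $m$ factors of $R$. The only obstacle is bookkeeping---confirming that the Adem and commutation corrections used to put the output back into admissible form are $R$-homogeneous---which is precisely the content of the first paragraph.
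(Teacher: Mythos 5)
Your argument is correct and is essentially the same as the paper's (the corollary is left to direct inspection there): you check that the Adem, $R$--$v$ commutation, and $v$--$v$ relations are homogeneous in the number of $R$'s, and then note that the Leibniz-type terms of $d$ preserve the $R$-count while the only weight-raising contribution comes from $dy_k$, which vanishes for $k=1$. The weak-increase reading of ``increases weight'' is the intended filtration statement, so nothing further is needed.
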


\section{First calculations: $n=-1$}

Our base case for calculation is when $n=-1$, where things simplify
greatly: there is no interference from the $v_i$ or differentials on
$R^a$.

\begin{prop}
  The Koszul complex calculating $\Ext_{E(-1)R\Mod}(QH_* BP, \mb F_2)$
  has a basis of monomials
  \[
    R^{a_1} R^{a_2} \dots R^{a_m} y_k
  \]
  ranging over $a_i$ and $k$ such that $k \geq 1$, $a_i \geq 2
  a_{i+1}$, and $2^{k+1} \leq a_m$. 

  The differential is determined by
  \[
    dR^a(x) = R^a(dx)
  \]
  and
  \[
    dy_k = \sum_{j < k} R^{2^{k+1} - 2^{j+1} + 1} y_j.
  \]
\end{prop}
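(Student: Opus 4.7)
The plan is to derive this proposition as a direct specialization of the general Koszul complex theorem stated earlier to the case $n = -1$ and $N = QH_*BP$, combined with the description of the indecomposables $QH_*BP$ given in the preceding section.

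First, I would observe that the index $n = -1$ is algebraically degenerate. The exterior Hopf algebra $E(-1)_*$ has no generators $\tau_i$ (the range $0 \leq i \leq -1$ is empty), so there are no Milnor primitives and correspondingly no dual variables $v_i$ in the Koszul complex. This strips the monomial basis down to $R^{a_1} \cdots R^{a_m} y^i$ with only $R$-operators present.

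Next, I would identify the generators $y^i$: by the earlier computation of $QH_*BP$, its $\mb F_2$-basis is $\{[\xi_k^2] \mid k \geq 1\}$ with $|[\xi_k^2]| = 2^{k+1} - 2$, so the dual basis is $\{y_k\}_{k \geq 1}$. The admissibility constraint $-|y^i| + 2 \leq a_m$ from the general theorem then translates into $2^{k+1} \leq a_m$, which together with $a_i \geq 2 a_{i+1}$ accounts for the stated basis.

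For the differential, the non-$y$ formulas specialize cleanly: the $v_i$-differential is vacuous, and
\[
dR^a(x) = (a+1)\sum_{0 \leq k \leq n} v_k R^{a+2^k-1}(x) + R^a(dx)
\]
loses its first term (empty sum over $k$), leaving $dR^a(x) = R^a(dx)$ with no parity distinction. The remaining step is to dualize the $E(-1)R$-module structure on $QH_*BP$ into a formula for $dy_k$. Since the $[\xi_k^2]$ are concentrated in even degree and each Milnor primitive has odd degree, the $M_r$-action is automatically trivial, so only the Dyer--Lashof operations contribute. By Steinberger's formula the nontrivial connections are $Q^{2^{k+1}-2^{j+1}}[\xi_j^2] = [\xi_k^2]$ for $j < k$, and dualizing under the convention that $R^a$ is dual to $Q^{a-1}$ produces $dy_k = \sum_{j<k} R^{2^{k+1} - 2^{j+1} + 1} y_j$, as claimed.

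The only step requiring genuine care is the shift-by-one in the $R^a \leftrightarrow Q^{a-1}$ duality and making sure the excess/admissibility bounds match up; otherwise the argument is routine bookkeeping and no real obstacle arises.
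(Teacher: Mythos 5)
Your proposal is correct and follows essentially the same route the paper takes: the paper presents this proposition as the $n=-1$ specialization of the general Koszul complex (with the basis of $QH_*BP$ given by the duals of the $[\xi_k^2]$, the trivial Milnor-primitive action, and Steinberger's $Q$-formula dualized under $R^a \leftrightarrow Q^{a-1}$ giving $dy_k$), with no further argument needed. Your bookkeeping of the degree bound $-|y_k|+2 = 2^{k+1} \leq a_m$ and the emptiness of the $v_k$-sum matches the paper's implicit derivation.
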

This allows us to start charting things up and doing the work of
calculating the result. The first portion of the Koszul complex
appears in Figure~\ref{fig:neg1}, with operations on $y_1$ indicated
by classes in black.

\begin{figure}
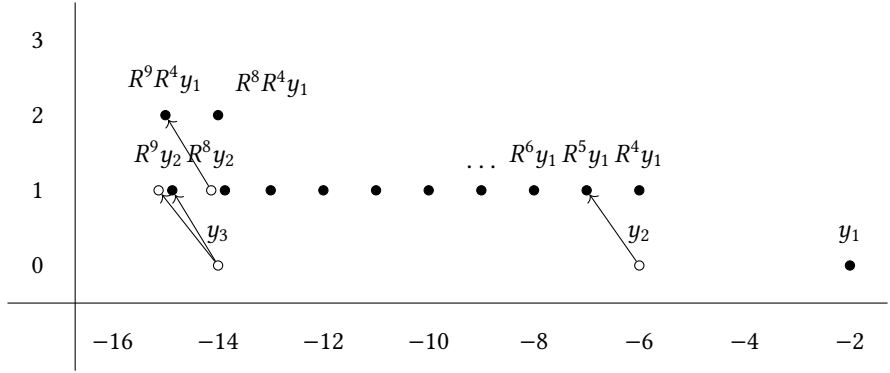

  \centering
\begin{sseqdata}[name=neg1, Adams grading, x tick step = 2, xscale =
    0.70, x range={-16}{-2}, y range={0}{3}]
    \class["y_1" above, fill](-2,0)
    \class["y_2" above](-6,0)
    \class["y_3" above](-14,0)
    \class["R^4 y_1" above, fill](-6,1)
    \class["R^5 y_1" above, fill](-7,1)
    \class["R^6 y_1" above, fill](-8,1)
    \class["\dots" above, fill](-9,1)
    \class[fill](-10,1)
    \class[fill](-11,1)
    \class[fill](-12,1)
    \class[fill](-13,1)
    \class["R^8 y_2" above](-14,1)
    \class[fill](-14,1)
    \class["R^9 y_2" above](-15,1)
    \class[fill](-15,1)
    \class["R^8 R^4 y_1" above right, fill](-14,2)
    \class["R^9 R^4 y_1" above, fill](-15,2)
    \d1(-6,0)
    \d1(-14,0,,1)
    \d1(-14,0,,2) 
    \d1(-14,1)
  \end{sseqdata}
  \printpage[name = neg1, page = 1]
  
  \caption{Part of the Koszul complex for $n=-1$}
  \label{fig:neg1}
\end{figure}
\begin{prop}
  The $\Ext$-groups $\Ext_{E(-1)R\Mod}(QH_* BP, \mb F_2)$ have a basis of
  admissible monomials
  \[
    R^{a_1} R^{a_2} \dots R^{a_m} y_1
  \]
  (meaning, monomials such that $a_i \geq 2a_{i+1}$ and $a_m \geq 4$)
  which do not end in any of the sequences $R^5 y_1$, $R^9 R^4 y_1$,
  $R^{17} R^8 R^4 y_1$, $\dots$
\end{prop}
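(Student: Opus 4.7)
The plan is to analyze the Koszul complex via the filtration $F_k$ spanned by admissibles ending in $y_j$ for $j\leq k$. In the $n=-1$ case there are no $v_i$'s, so the differential passes transparently through the $R^a$'s: $d(R^{a_1}\cdots R^{a_m}y_k) = R^{a_1}\cdots R^{a_m}(dy_k)$ after Adem-rewriting into admissible form. In particular every admissible ending in $y_1$ is a cycle. Since $dy_k\in F_{k-1}$, the filtration yields a spectral sequence with $E_1^k = \{\text{admissibles ending in }y_k\}$, trivial $d_0$, and $d_1\colon E_1^k\to E_1^{k-1}$ induced by the $y_{k-1}$-component $R^{2^k+1}y_{k-1}$ of $dy_k$.

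The goal is to show $E_2^1$ agrees with the claimed basis and $E_2^k = 0$ for $k\geq 2$, after which the spectral sequence collapses at $E_2$ (higher differentials having zero sources). Both claims rest on the Adem identities
\[
 R^{2^{k+1}}R^{2^k+1} = R^{2^{k+1}+1}R^{2^k}, \qquad R^{2^{k+1}+1}R^{2^k+1} = 0,
\]
each verified by locating the unique surviving binomial coefficient in the Adem formula. For the image of $d_1\colon E_1^2\to E_1^1$, one computes $d_1(R^{a_1}\cdots R^{a_m}y_2) = R^{a_1}\cdots R^{a_m}R^5y_1$; in admissible form this produces the $R^5y_1$-family (for $a_m\geq 10$), vanishes (for $a_m=9$, contributing kernel elements), or for $a_m=8$ produces the $R^9R^4y_1$-family via $R^8R^5=R^9R^4$, cascading further to the $R^{17}R^8R^4y_1$-family when $a_{m-1}=16$ via $R^{16}R^9=R^{17}R^8$, and so on. Iterating gives the entire forbidden family $R^{2^{k+1}+1}R^{2^k}\cdots R^4 y_1$ as boundaries $d(R^{2^{k+1}}R^{2^k}\cdots R^8 y_2)$.

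For $E_2^k=0$ with $k\geq 2$, the identity $R^{2^{k+1}+1}R^{2^k+1}=0$ pinpoints the kernel elements of $d_1$ (such as $R^9y_2$, $R^{17}R^8y_2$, and their prefix extensions), while the parallel identity $R^{2^{k+1}}R^{2^k+1}=R^{2^{k+1}+1}R^{2^k}$ exhibits each such kernel element as a $d_1$-image from $E_1^{k+1}$: explicitly $d_1(y_3)=R^9y_2$, $d_1(R^{16}y_3)=R^{17}R^8y_2$ after Adem, and so forth. Thus kernel equals image at every layer $k\geq 2$, and the spectral sequence collapses.

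The main obstacle is the bookkeeping needed to verify uniformly that these two Adem identities drive both the forbidden-tail cascade for $E_2^1$ and the kernel/image pairing for $E_2^k$ at arbitrary filtration levels, including at the larger prefixes $(a_1,\ldots,a_{m-1})$ where the cascade propagates ``upward'' through a potentially long chain of dyadic operators before terminating, and ensuring that no stray kernel elements escape the described pairing.
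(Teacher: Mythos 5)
Your proposal is correct and follows essentially the same route as the paper's proof sketch: filter by the index of $y_k$ so the associated graded differential is the leading term $R^{2^k+1}y_{k-1}$, then use the Adem identities $R^{2^{k+1}+1}R^{2^k+1}=0$ and $R^{2^{k+1}}R^{2^k+1}=R^{2^{k+1}+1}R^{2^k}$ to run the same cascade identifying the image (the forbidden tails $R^5$, $R^9R^4$, $R^{17}R^8R^4,\dots$) and the kernel (right multiples of $R^{2^{k+1}+1}$), which is then exactly killed from the next filtration layer. The only cosmetic difference is that you phrase the collapse in spectral-sequence language while the paper simply asserts exactness of the associated graded away from the bottom filtration.
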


\begin{rmk}
  An examination of the Adem relations for the operations $R^a$ finds
  that this result means that we essentially have a ``free unstable
  module'' over the $R^a$ subject to one relation: $R^5 y_1 = 0$. For
  instance, the Adem relations imply $R^8 R^5 = R^9 R^4$, $R^{16} R^8
  R^5 = R^{17} R^8 R^4$, and so on.
  
\end{rmk}

\begin{proof}[Proof sketch.]
  We filter the Koszul complex by putting $y_i$ into filtration $i$ so
  that in the associated graded complex the differential becomes
  $d(y_i) = R^{2^i+1} y_{i-1}$. The claim is then that, on the
  associated graded, this is exact except in filtration zero.

  In degree zero of the associated graded, we have everything of the
  form $R^A y_1 = R^{a_1} R^{a_2} \dots R^{a_k} y_1$, and all of those
  are permanent cycles.

  In degree one of the associated graded, we have
  $d(R^A y_2) = R^A R^5 y_1$. (We remark that the elements $R^A y_2$
  are only defined when the last term is $R^8$ or higher.) The map
  $R^A \mapsto R^A R^5$ has image consisting of the right multiples
  of $R^5$. It is also injective on anything where the admissible $R^A$ ends
  with $R^{10}$ or higher, because then $R^A R^5$ is still
  admissible. Thus we only have to see what happens to the admissible
  monomials that end in $R^9$ or $R^8$.

  The Adem relations say $R^9 R^5 = 0$, so all admissible monomials
  ending in $R^9$ are in the kernel.

  The Adem relations also say that $R^8 R^5 = R^9 R^4$. The admissible
  monomials $R^B R^8$ where $B$ ends in $R^{18}$ or higher map
  isomorphically to admissible monomials of the form $R^B R^9
  R^4$. That now just leaves us checking admissible monomials that end
  in $R^{17} R^8$ or $R^{16} R^8$.

  The Adem relations say $R^{17} R^9 = 0$, so anything ending in
  $R^{17} R^8 y_2$ is in the kernel. The Adem relations also say that
  $R^{16} R^9 = R^{17} R^8$. We inductively repeat this pattern.

  We ultimately find that the kernel in grading one consists of any
  admissible monomials $R^A y_2$ where $R^A$ ends in $R^9$, $R^{17}
  R^8$, $R^{33} R^{16} R^8$, $R^{65} R^{32} R^{16} R^8$, and so
  on. These are precisely all the right multiples of $R^9$.

  We then look at grading two, where we have $d(y_3) = R^9 y_2$; all
  the right multiples of $R^9$ are in the image. We run the exact same
  computation and find that the kernel consists of all the multiples
  of $R^{17}$. This procedure continues indefinitely.
\end{proof}

\begin{rmk}
  This is very similar to a computation of the topological
  Andr\'e--Quillen cohomology of $H\mb F_2$ stated in
  \cite{lazarev-ainfty} and determined by alternative means in Hoyer's
  thesis \cite{hoyer-thesis}.
\end{rmk}

The final answer provides us with a bit of relief: the cohomology of
the Koszul complex is actually a lot \emph{less} complicated to
describe than the Koszul complex itself.

\begin{rmk}
  It is tempting to hope that the Koszul complex is quasi-isomorphic
  to its quotient by all the $y_i$ for $i > 1$ and by the relation
  $R^5 y_1 = 0$. Unfortunately, this is not the case. For example, the
  differentials $d(y_2) = R^5 y_1$ and $d(y_3) = R^9 y_2 + R^{13} y_1$
  in the Koszul complex show that the Koszul complex has an identity
  of secondary operations $\langle R^9, R^5, y_1\rangle = R^{13} y_1$ not
  satisfied in the quotient complex.
\end{rmk}

\section{Further calculations: $n=0$}

To calculate the next $\Ext$-groups, we can feed our previous
$\Ext$-calculation into a Bockstein spectral sequence.
\begin{prop}
  The Bockstein spectral sequence 
  \[
    \Ext_{E(-1)R\Mod}(QH_* BP, \mb F_2) \otimes \mb F_2[v_0] \Rightarrow
    \Ext_{E(0)R\Mod}(QH_* BP, \mb F_2)
  \]
  has a basis of admissible monomials
  \[
    v_0^{k} R^{a_1} R^{a_2} \dots R^{a_m} y_1
  \]
  (except for those which end in $R^5$, $R^9 R^4$, \dots) with
  $v_0$-linear differential satisfying
  \[
    dR^a(x) = (a+1) v_0 R^{a+1} x + R^a(dx)
  \]
  and the rule
  \[
    R^a v_0 = 0.
  \]
\end{prop}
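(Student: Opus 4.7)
The plan is to realize this as the Bockstein spectral sequence associated to the $v_0$-adic filtration on the $E(0)R$-Koszul complex $C^*(QH_* BP)$, applying the general Bockstein spectral sequence result from the Filtrations and stability section with $k=-1$, $n=0$. The quotient $C^*(QH_* BP)/(v_0)$ is precisely the $E(-1)R$-Koszul complex, and its cohomology was identified in the previous section with the explicit $\mb F_2$-basis of admissible monomials $R^{a_1}\cdots R^{a_m} y_1$, excluding those ending in $R^5$, $R^9 R^4$, $R^{17} R^8 R^4$, and so on. Tensoring with $\mb F_2[v_0]$ then produces the stated basis for the $E_1$-page.

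To extract the differential, I would simply unpack the Koszul differential on $C^*(QH_* BP)$ specialized to $n=0$, as already laid out in the set-up proposition for $BP$. For $n=0$ the sum $\sum_{0\le k\le n} v_k R^{a+2^k-1}(x)$ collapses to its single $k=0$ summand, producing the claimed Bockstein formula for $dR^a(x)$, and $v_0$-linearity follows directly from the Leibniz-type rule $d(v_0 x) = v_0\, dx$. The commutation rule $R^a v_0 = 0$ is the $i = n = 0$ specialization of the general Koszul relation $R^a v_i = \sum_{i<j\le n} v_j R^{a-2^i+2^j} x$, whose right-hand sum is empty because there is no index $j$ with $0 < j \le 0$.

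In short, the proposition is a translation of two ingredients already in hand — the $v_0$-adic Bockstein spectral sequence from the Filtrations and stability section, together with the preceding computation of $\Ext_{E(-1)R\Mod}(QH_* BP,\mb F_2)$ — into explicit data on the $E_1$-page. There is no real obstacle; the substantive content lives in the previous proposition identifying $H^*$ of the $E(-1)R$-Koszul complex, and what remains is the mechanical verification that the general Koszul differential and commutation relations restrict to exactly the form claimed.
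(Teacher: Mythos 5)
Your proposal is correct and matches the paper's (implicit) argument: the proposition appears there without a separate proof, being exactly the specialization of the general Bockstein spectral sequence from the filtrations section and of the Koszul-complex relations to $n=0$, fed with the preceding computation of $\Ext_{E(-1)R\Mod}(QH_* BP, \mb F_2)$. One small caution: the summand you quote, $v_k R^{a+2^k-1}(x)$, would give $v_0 R^{a} x$ at $k=0$ rather than the stated $v_0 R^{a+1} x$; the degree-correct exponent (and the one consistent with the paper's own $n=0,1,2$ specializations) is $a+2^{k+1}-1$, so your conclusion is right but the general formula you cite carries a typo from the paper's statement of the Koszul differential.
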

In particular, these rules make it easy to apply the differential
to any element in our basis:
\[
  d(v_0^k R^{a_1} R^{a_2} \dots R^{a_m} y_1) =
  \begin{cases}
    0 &\text{if $a_1$ is odd}\\
    v_0^{k+1} R^{a_1+1} R^{a_2} \dots R^{a_m} y_1 &\text{if $a_1$ is even}
  \end{cases}
\]
In essence, our admissible monomials that start with $R^{even}$ are
attempting to make the admissible monomials that start with $R^{odd}$
into $v_0$-torsion elements. Calculating the $d_1$-differential is an
exercise in being careful about edge cases.

\begin{prop}
  The $\Ext$-groups
  \[
    \Ext_{E(0)R\Mod}(QH_* BP, \mb F_2)
  \]
  are a direct sum of copies of two types of terms:
  \begin{enumerate}
  \item $v_0$-torsion copies of $\mb F_2$ indexed by those admissible
    monomials $R^{a_1} R^{a_2} \dots R^{a_m} y_1$ with $a_1$ odd which
    do not end in any of the sequences $R^5 y_1$, $R^9 R^4 y_1$,
    $R^{17} R^8 R^4 y_1$, and so on, and
  \item copies of $\mb F_2[v_0]$ indexed by the admissible monomials
    $y_1$, $R^4 y_1$, $R^8 R^4 y_1$, $R^{16} R^8 R^4 y_1$, and so on.
  \end{enumerate}
\end{prop}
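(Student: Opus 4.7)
The plan is to compute the $d_1$-differential of the Bockstein spectral sequence by direct inspection and to observe that no higher differentials are possible. Applying the Leibniz-type formula from the preceding proposition to a basis element $v_0^k R^{a_1} R^{a_2} \cdots R^{a_m} y_1$, I would get contributions from each position $i$ at which $a_i$ is even. For $i \geq 2$, however, the fresh $v_0$ would sit immediately to the right of $R^{a_{i-1}}$, and the commutation relation $R^a v_0 = 0$ forces these terms to vanish. What remains is only the leftmost contribution,
\[
  d_1(v_0^k R^{a_1} \cdots R^{a_m} y_1) = \begin{cases} v_0^{k+1} R^{a_1+1} R^{a_2} \cdots R^{a_m} y_1 & a_1 \text{ even,} \\ 0 & a_1 \text{ odd.} \end{cases}
\]

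Next I would analyze the shift $\phi \colon R^{a_1} R^{a_2} \cdots \mapsto R^{a_1+1} R^{a_2} \cdots$ appearing on the right. It preserves admissibility (if $a_1 \geq 2a_2$ with $a_1$ even then $a_1+1 > 2a_2$) and is a bijection between admissible monomials starting with $R^{\text{even}}$ and those starting with an odd $R^a$ with $a \geq 5$. The question is when $\phi(m)$ fails to lie in the $E_1$-basis, i.e., ends in one of the forbidden sequences $R^5 y_1, R^9 R^4 y_1, R^{17} R^8 R^4 y_1, \ldots$. Since $\phi$ alters only the leftmost entry and the forbidden sequences begin with $R^{2^{\ell+1}+1}$ for some $\ell$, a tail-match analysis reduces this to the equality $\phi(m) = R^{2^{\ell+1}+1} R^{2^\ell} \cdots R^4 y_1$, which happens exactly when $m \in \{R^4 y_1, R^8 R^4 y_1, R^{16} R^8 R^4 y_1, \ldots\}$. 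For these \emph{exceptional} $m$ the differential $d_1$ vanishes on $E_1$; for every other non-exceptional even-starting admissible, $d_1$ pairs $m$ with $\phi(m)$, which is an odd-starting admissible lying in the $E_1$-basis, and every odd-starting admissible on $E_1$ is hit uniquely.

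I can then tabulate the $E_2$-page. The image of $d_1$ is precisely the span of $v_0^k \cdot m'$ for $m'$ an odd-starting admissible in the basis and $k \geq 1$, while the kernel is the span of $v_0^k \cdot m$ with $m$ odd-starting or $m \in \{y_1, R^4 y_1, R^8 R^4 y_1, R^{16} R^8 R^4 y_1, \ldots\}$. Taking homology, the surviving classes split as free $\mb F_2[v_0]$-modules on $y_1$ and on each exceptional $R^{2^{\ell+1}} R^{2^\ell} \cdots R^4 y_1$, together with $v_0$-torsion copies of $\mb F_2$ indexed by the odd-starting admissibles in the $E_1$-basis, matching the proposition. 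Finally, the full Koszul differential on any basis element increases the power of $v_0$ by exactly one when nonzero, so no Bockstein differential $d_r$ with $r \geq 2$ is possible and $E_2 = E_\infty$.

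The main obstacle is the enumeration in the second step: verifying that the listed exceptional admissibles exhaust the preimages under $\phi$ of the forbidden endings, and that no shorter forbidden ending accidentally arises at the right end of $\phi(m)$. Because $\phi$ fixes the tail $R^{a_2} \cdots R^{a_m} y_1$, any shorter accidental coincidence would already place $m$ itself in a forbidden sequence, which is excluded on $E_1$; this reduces the check to the length-matched case handled above and confirms the exceptional list exactly.
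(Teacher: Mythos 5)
Your computation of the $d_1$-differential and the edge-case analysis of the shift $\phi$ are correct and essentially the same as the paper's proof sketch: the relation $R^a v_0 = 0$ kills all but the leftmost contribution, odd-starting admissibles become $v_0$-torsion, and the only even-starting admissibles whose shift is forbidden are $R^4 y_1$, $R^8 R^4 y_1$, $R^{16} R^8 R^4 y_1, \dots$, which together with $y_1$ generate the $\mb F_2[v_0]$-towers. The tail-match argument is a reasonable way to make the paper's ``edge cases'' precise.

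The gap is in your final step. The inference that no $d_r$ with $r \geq 2$ can occur because the differential raises the power of $v_0$ by exactly one is not valid, and the premise is itself false for the full $E(0)$ Koszul complex, where $d y_j$ for $j \geq 2$ (e.g.\ $d y_2 = R^5 y_1$) preserves $v_0$-weight. Higher differentials in a filtered complex arise from zig-zags even when the raw differential raises filtration by at most one; that is exactly the classical mechanism of higher Bocksteins, and it is genuinely in play here. Concretely, $d(R^8 R^4 y_1) = v_0 R^9 R^4 y_1$ vanishes on $E_1$ only because $R^9 R^4 y_1 = d(R^8 y_2)$ is a boundary in the $E(-1)$-complex, and continuing the zig-zag gives $d(R^8 R^4 y_1 + v_0 R^8 y_2 + v_0^2 y_3) = v_0^2 R^{13} y_1$, a candidate higher differential; it dies only because $v_0 R^{13} y_1 = d(R^{12} y_1)$, i.e.\ because $R^{13} y_1$ is already $v_0$-torsion on $E_2$. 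So degeneration requires an argument. Two ways to close it: first, odd-starting admissibles ending in $y_1$ are honest cycles in the $E(0)$ Koszul complex (every interior term is killed by $R^a v_0 = 0$ and $d y_1 = 0$), so they support no differentials, and they cannot be hit since all $d_r$ raise $v_0$-weight; second, for the towers, either exhibit honest cycle representatives (such as $R^4 y_1 + v_0 y_2$ and $R^8 R^4 y_1 + v_0 R^8 y_2 + v_0^2 y_3 + v_0 R^{12} y_1$), or observe that every class of $v_0$-weight at least $2$ on $E_2$ lies in a tower and hence has even total degree, while the only weight-zero classes that are not already honest cycles (the tower generators) also have even total degree; since every differential lowers total degree by one, parity rules out any further differential into or out of the towers.
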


The initial portion of these $\Ext$-groups is sketched in
Figure~\ref{fig:pos0}.
\begin{figure}
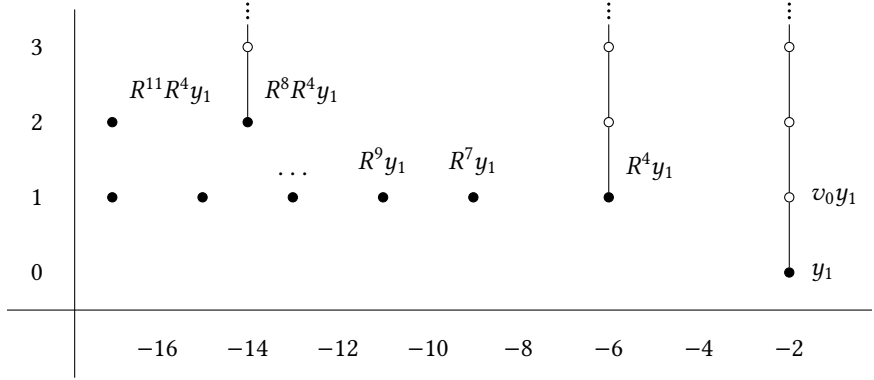

  \centering
\begin{sseqdata}[name=pos0, Adams grading, x tick step = 2, xscale =
    0.6, x range={-17}{-1}, y range={0}{3}]
    \class["y_1" right, fill](-2,0)
    \class["v_0 y_1" right](-2,1)
    \class(-2,2)\class(-2,3)\class(-2,4)
    \structline(-2,0)(-2,1)
    \structline(-2,1)(-2,2)
    \structline(-2,2)(-2,3)
    \structline(-2,3)(-2,4)
    \class["R^4 y_1" above right, fill](-6,1)
    \class(-6,2)\class(-6,3)\class(-6,4)
    \structline(-6,1)(-6,2)
    \structline(-6,2)(-6,3)
    \structline(-6,3)(-6,4)
    \class["R^7 y_1" above, fill](-9,1)
    \class["R^9 y_1" above, fill](-11,1)
    \class["\dots" above, fill](-13,1)
    \class[fill](-15,1) 
    \class[fill](-17,1)
    \class["R^8 R^4 y_1" above right, fill](-14,2)
    \class(-14,3)\class(-14,4)
    \structline(-14,2)(-14,3)
    \structline(-14,3)(-14,4)
    \class["R^{11} R^4 y_1" above right, fill](-17,2)
  \end{sseqdata}
  \printpage[name = pos0, page = 1]
  
  \caption{Part of the $\Ext$-groups for $n=0$}
  \label{fig:pos0}
\end{figure}

\begin{proof}[Proof sketch]
  The first observation is that if $R^{a_1} R^{a_2} \dots R^{a_m}$ is
  an admissible monomial and $a_1$ is odd, then
  $R^{a_1 - 1} R^{a_2} \dots R^{a_m}$ is also an admissible
  monomial. Therefore, every admissible monomial starting with an
  \emph{odd} term is a permanent cycle that becomes annihilated by
  $v_0$.

  Almost all of the admissible monomials
  $R^{a_1} R^{a_2} \dots R^{a_m}$ where $a_1$ is even, by contrast,
  support differentials and do not survive the spectral sequence. The
  only exception is when the admissible monomial
  $R^{a_1+1} R^{a_2} \dots R^{a_m}$ is already zero, and this occurs
  only when it ends with one of the sequences $R^5$, $R^9 R^4$,
  $R^{17} R^8 R^4$, and so on. In this case, there are two
  possibilities: either $R^{a_1} R^{a_2} \dots R^{a_m}$ \emph{also}
  ends with this sequence and it was already the zero monomial, or it
  is one of the monomials $R^4$, $R^8 R^4$, $R^{16} R^8 R^4$, and so
  on. These monomials are permanent cycles and produce infinite
  $v_0$-towers.
\end{proof}

\section{Further calculations: $n=1$}

There are two useful spectral sequences $\Ext_{E(1)R\Mod}(QH_* BP, \mb
F_2)$ based on our previous work. Playing the information in these two
spectral sequences off each other provides a useful technique for
resolving hidden extensions.

The first spectral sequence is the Bockstein spectral sequence
\[
  \Ext_{E(0)R\Mod}(QH_* BP, \mb F_2) \otimes \mb F_2[v_1] \Rightarrow
  \Ext_{E(1)R\Mod}(QH_* BP, \mb F_2)
\]

The second one is the filtration on the Koszul complex that puts both $v_0$
and $v_1$ in filtration $1$ simultaneously:
\[
  \Ext_{E(-1)R\Mod}(QH_* BP, \mb F_2) \otimes \mb F_2[v_0, v_1] \Rightarrow
  \Ext_{E(1)R\Mod}(QH_* BP, \mb F_2)
\]
that has a basis of monomials
\[
  v_0^{k} v_1^{l} R^{a_1} R^{a_2} \dots R^{a_m} y_1
\]
(except for those which are right multiples of $R^5$) with
$\mb F_2[v_0, v_1]$-linear differential satisfying the following
relations:
\begin{align*}
  dR^a(x) &= (a+1) (v_0 R^{a+1} x + v_1 R^{a+3} x) + R^a(dx)\\
  R^a(v_1 x) &= 0\\
  R^a(v_0 x) &= v_1 R^{a+2} x
\end{align*}

In particular, these rules make it mechanical to apply the
differential to any element in our basis:
\[
  d(R^{a_1} R^{a_2} x) =
  \begin{cases}
    0 &\text{if $a_1$ and $a_2$ are odd}\\
    v_1 R^{a_1 + 2} R^{a_2+1} x &\text{if $a_1$ is odd and
    $a_2$ is even}\\
    v_0 R^{a_1 + 1} R^{a_2} x + v_1 R^{a_1 + 3}
    R^{a_2} x &\text{if $a_1$ is even and $a_2$ is odd}\\
    v_0 R^{a_1 + 1} R^{a_2} x + v_1 R^{a_1 + 3}  R^{a_2} x + v_1
    R^{a_1 + 2} R^{a_2+1} x
    &\text{if $a_1$ and $a_2$ are even}
  \end{cases}
\]
We will mostly use this to inform us about differentials and hidden
extensions in the $v_1$-Bockstein spectral sequence. For example, we
obtain the following results:
\begin{lem}
  In the $v_1$-Bockstein spectral sequence, there are
  $d_1$-differentials
  \[
    d(R^{a_1} R^{a_2} R^{a_3} \dots R^{a_m} y_1) = v_1 R^{a_1+2}
    R^{a_2+1} R^{a_3} \dots R^{a_m} y_1
  \]
  when $a_1$ is odd and $a_2$ is even.
\end{lem}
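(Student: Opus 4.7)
The plan is to compute the full $E(1)R$-Koszul differential of $x = R^{a_1} R^{a_2} R^{a_3} \dots R^{a_m} y_1$ directly from the formulas displayed just above the lemma, and then observe that the result has $v_1$-valuation exactly one, so it is automatically the $d_1$-differential of the $v_1$-Bockstein spectral sequence applied to the class $[x]$.

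First I would use that $a_1$ is odd, so that $(a_1+1) \equiv 0 \pmod{2}$ and hence $dx = R^{a_1}\, d(R^{a_2} R^{a_3} \dots R^{a_m} y_1)$. Next, using that $a_2$ is even, I would expand
\[
d(R^{a_2} R^{a_3}\dots y_1) = v_0 R^{a_2+1}(R^{a_3}\dots y_1) + v_1 R^{a_2+3}(R^{a_3}\dots y_1) + R^{a_2}\, d(R^{a_3}\dots y_1),
\]
then apply $R^{a_1}$ and use the commutation rules $R^a(v_0 z) = v_1 R^{a+2}(z)$ and $R^a(v_1 z) = 0$. The first summand yields the desired $v_1 R^{a_1+2} R^{a_2+1}(R^{a_3}\dots y_1)$, and the second vanishes.

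The hard part will be showing that the tail $R^{a_1} R^{a_2}\, d(R^{a_3}\dots R^{a_m} y_1)$ is zero. To handle this, I would expand $d(R^{a_3}\dots y_1)$ using the same rules and observe that each summand carries exactly one factor of $v_0$ or $v_1$. Applying $R^{a_2}$ then either converts a $v_0$-term into a $v_1 R^{a_2+2}$-term or kills a $v_1$-term outright, so every surviving summand has a single $v_1$; applying $R^{a_1}$ finally annihilates it via $R^a v_1 = 0$. The combinatorial bookkeeping here is the only delicate point.

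Finally I would verify admissibility of the target monomial: $a_1 \geq 2 a_2$ gives $a_1+2 \geq 2(a_2+1)$, and $a_2 \geq 2 a_3$ gives $a_2+1 \geq 2 a_3$, so $R^{a_1+2} R^{a_2+1} R^{a_3}\dots R^{a_m} y_1$ is a genuine basis element. Since the full Koszul differential of $x$ is then exactly $v_1 R^{a_1+2} R^{a_2+1} R^{a_3}\dots R^{a_m} y_1$, with no contributions of higher $v_1$-valuation, this is precisely the $d_1$-Bockstein differential on $[x]$, completing the argument.
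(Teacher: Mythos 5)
Your proposal is correct and matches the paper's (implicit) argument: the lemma is read off from the displayed case formula $d(R^{a_1}R^{a_2}x)=v_1R^{a_1+2}R^{a_2+1}x$ for $a_1$ odd, $a_2$ even, which is exactly the mechanical computation you perform with the rules $dR^a(x)=(a+1)(v_0R^{a+1}x+v_1R^{a+3}x)+R^a(dx)$, $R^a(v_0x)=v_1R^{a+2}x$, $R^a(v_1x)=0$. Your explicit verification that the tail $R^{a_1}R^{a_2}d(R^{a_3}\cdots y_1)$ dies (every term of the inner differential carries one $v$-factor, which is killed after commuting past two more $R$'s) and the admissibility check are just the details the paper leaves unstated.
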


\begin{lem}
  In the $v_1$-Bockstein spectral sequence, when $x$ is a cycle there
  are hidden extensions
  \[
    v_0 R^{a_1} R^{a_2} x = v_1 R^{a_1 + 2} R^{a_2} x
  \]
  when $R^a x$ is admissible and $a$ is odd.
\end{lem}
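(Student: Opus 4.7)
The plan is to read the hidden extension off the combined $(v_0, v_1)$-Bockstein spectral sequence recorded just above the statement, using the fact that the combined spectral sequence and the $v_1$-only Bockstein spectral sequence converge to the same group $\Ext_{E(1)R\Mod}(QH_* BP, \mb F_2)$. Any differential in the combined spectral sequence produces a relation on $E_\infty$, hence on the abutment; when the two sides of that relation sit in different $v_1$-Bockstein filtrations, that is precisely the content of a hidden $v_0$-extension in the $v_1$-Bockstein spectral sequence.

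First I would check the admissibility bookkeeping. Assuming $a_1, a_2$ are both odd with $R^{a_1} R^{a_2}$ admissible, the inequality $a_1 \geq 2a_2$ sharpens to $a_1 \geq 2a_2 + 1$ for parity reasons, so $a_1 - 1 \geq 2a_2$ and the monomial $R^{a_1 - 1} R^{a_2} x$ is itself admissible, with $a_1 - 1$ even. Now I would apply the ``first exponent even, second exponent odd'' branch of the combined differential formula to $R^{a_1 - 1} R^{a_2} x$, obtaining
\[
  d\bigl(R^{a_1 - 1} R^{a_2} x\bigr) = v_0 R^{a_1} R^{a_2} x + v_1 R^{a_1 + 2} R^{a_2} x,
\]
since the correction term $R^{a_1 - 1} R^{a_2}(dx)$ vanishes by the cycle hypothesis on $x$. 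Both summands on the right are distinct basis monomials lying in combined filtration one, so this boundary equation forces the identity
\[
  v_0 R^{a_1} R^{a_2} x = v_1 R^{a_1 + 2} R^{a_2} x
\]
in $E_\infty$ of the combined spectral sequence, and hence in the common abutment.

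To finish, I would translate the equation back to the $v_1$-Bockstein spectral sequence: the class $v_0 R^{a_1} R^{a_2} x$ lives in $v_1$-filtration zero while $v_1 R^{a_1 + 2} R^{a_2} x$ lives in $v_1$-filtration one, so their coincidence on the abutment is detectable only as a hidden $v_0$-extension, as claimed. The main point that requires a little care is confirming that $R^{a_1} R^{a_2} x$ and $R^{a_1 + 2} R^{a_2} x$ actually survive the $v_1$-Bockstein spectral sequence, so that the extension is nonvacuous: both open with an odd $R$-exponent, so the $d_1$-formula of the preceding lemma gives zero on them, and nothing of the required shape can hit them via that $d_1$. This survival check, together with ruling out higher $v_1$-Bockstein differentials in the relevant bidegrees, is the only genuinely calculational obstacle; everything else is a direct manipulation of the combined differential formula.
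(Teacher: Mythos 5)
Your proposal is correct and follows essentially the same route as the paper: the paper's proof consists precisely of the single differential $d(R^{a_1-1} R^{a_2} x) = v_0 R^{a_1} R^{a_2} x + v_1 R^{a_1+2} R^{a_2} x$ in the filtered $(v_0,v_1)$ Koszul complex, which is exactly the boundary you exhibit. Your additional bookkeeping (admissibility of $R^{a_1-1}R^{a_2}x$ and survival of the two monomials in the $v_1$-Bockstein spectral sequence) is consistent with, and merely spells out, what the paper leaves implicit.
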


\begin{proof}
  The differential
  \[
    d(R^{a_1-1} R^{a_2} x) = v_0 R^{a_1} R^{a_2} x + v_1 R^{a_1 + 2}
    R^{a_2} x,
  \]
  for $x$ a cycle in the filtered Koszul complex, establishes this
  relation.
\end{proof}

Just as in the previous section, we can now determine the result of
the $v_1$-Bockstein spectral sequence by a careful examination of edge cases.

\begin{prop}
  The $\Ext$-groups
  \[
    \Ext_{E(1)R\Mod}(QH_* BP, \mb F_2)
  \]
  are a direct sum of copies of three types of terms:
  \begin{enumerate}
  \item a free copy of $\mb F_2[v_0, v_1]$ generated by $y_1$,
  \item $(v_0,v_1)$-torsion copies of $\mb F_2$ indexed by those admissible
    monomials $R^{a_1} R^{a_2} \dots R^{a_m} y_1$ with $a_1$ and $a_2$
    odd which do not end in any of the sequences $R^5 y_1$, $R^9 R^4 y_1$,
    $R^{17} R^8 R^4 y_1$, and so on, and
  \item ``$(v_0,v_1)$ sawtooth'' patterns
    \[
      \mb F_2[v_0,v_1] \{R^{2k+1} x\} / (v_0 R^{2k+1} x = v_1
      R^{2k+3} x)
    \]
    as $R^{2k+1} x$ range over admissible monomials with $x$ in the
    set $y_1, R^4 y_1, R^8 R^4 y_1, \dots$, mod the relations that
    $R^5 y_1 = 0$, $R^9 R^4 y_1 = 0$, and so on.
  \end{enumerate}
\end{prop}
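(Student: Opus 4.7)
The plan is to run the $v_1$-Bockstein spectral sequence
\[
  \Ext_{E(0)R\Mod}(QH_* BP, \mb F_2) \otimes \mb F_2[v_1] \Rightarrow \Ext_{E(1)R\Mod}(QH_* BP, \mb F_2),
\]
reading the $d_1$-differentials off the first lemma and resolving the hidden $v_0$--$v_1$-extensions via the second lemma and the jointly $(v_0, v_1)$-filtered Koszul complex. The input splits as (A) the $v_0$-torsion $\mb F_2$'s on admissible $R^{a_1} R^{a_2} \cdots R^{a_m} y_1$ with $a_1$ odd (not in any forbidden suffix), and (B) the free $\mb F_2[v_0]$-towers on $y_1, R^4 y_1, R^8 R^4 y_1, R^{16} R^8 R^4 y_1, \ldots$. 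The key observation driving the whole analysis is that every $d_1$-output is divisible by $v_1$, so a class sitting in $v_1$-degree zero can only fail to survive by supporting a differential, never by being hit.

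For type (A), the first lemma gives $d_1(R^{a_1} R^{a_2} \cdots y_1) = v_1 R^{a_1+2} R^{a_2+1} \cdots y_1$ whenever $a_1$ is odd and $a_2$ is even. Hence every admissible class with $a_1$ and $a_2$ both odd is a $d_1$-cycle in $v_1$-degree zero, while its $v_1$-multiples are exactly the targets of the (odd, even)-leading sources; each such (odd, odd)-leading class therefore contributes a single $(v_0,v_1)$-torsion $\mb F_2$, the type 2 summand. Single-$R$ classes $R^{a_1} y_1$ with $a_1$ odd have no $a_2$, support no $d_1$, and fit into the same family. For type (B), the generator $y_1$ supports no differential and no $v_1^k y_1$ is a target (all $d_1$-targets carry at least two $R$'s), yielding the free $\mb F_2[v_0, v_1]$ of type 1. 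For the remaining base generators $x = R^{2^i} R^{2^{i-1}} \cdots R^4 y_1$, all of whose indices are even, expanding $d$ in the $n=1$ Koszul complex and invoking the vanishing of forbidden tails such as $R^9 R^4 y_1$ and $R^{10} R^5 y_1$ in $\Ext_{E(0)R\Mod}$ shows that each such $x$ supports a nonzero $d_1$ whose target is $v_1$ times an odd-leading type (A) class. The hidden-extension lemma $v_0 R^{a_1} R^{a_2} z = v_1 R^{a_1+2} R^{a_2} z$, extracted from the same filtered Koszul complex, then stitches the surviving odd-leading $R^{2k+1} x$ together with the surviving $v_0$-multiples into the sawtooth relation $v_0 R^{2k+1} x = v_1 R^{2k+3} x$, truncated by $R^5 y_1 = 0$, $R^9 R^4 y_1 = 0$, and so on, assembling the sawtooth modules of type 3.

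The principal obstacle is the combinatorial bookkeeping. I must verify that the shift $(a_1, a_2) \mapsto (a_1+2, a_2+1)$ gives a bijection between (odd, even)-leading and (odd, odd)-leading non-forbidden admissible monomials (source and target enter or leave the Adem-forbidden list together, so no stray survivors slip through), that the $d_1$-differentials out of type (B) generators land exactly on the odd-leading classes expected to fit the sawtooth shape, and that no higher $d_r$ of the $v_1$-Bockstein appear. The last point is best addressed by cross-comparison with the joint $(v_0, v_1)$-Bockstein, which converges to the same $\Ext_{E(1)R\Mod}$ directly from $\Ext_{E(-1)R\Mod}$ and realizes the sawtooth extensions as $d_1$-differentials already at the $E_1$ page, ruling out hidden discrepancies.
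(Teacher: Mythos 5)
Your overall route is the same as the paper's: feed the $n=0$ answer into the $v_1$-Bockstein spectral sequence, read off $d_1$ from the lemma $d(R^{a_1}R^{a_2}\cdots y_1)=v_1R^{a_1+2}R^{a_2+1}\cdots y_1$ (for $a_1$ odd, $a_2$ even), and resolve $v_0$--$v_1$ extensions by playing this off against the jointly $(v_0,v_1)$-filtered Koszul complex. But there is one genuine error: the claim that ``no higher $d_r$ of the $v_1$-Bockstein appear.'' They do, and they are essential to the answer. The point is that $d_1$ is $\mb F_2[v_0]$-linear and $d_1(R^4y_1)=v_1R^7y_1$, but $R^7y_1$ is a $v_0$-\emph{torsion} class in $\Ext_{E(0)R\Mod}(QH_*BP,\mb F_2)$, so $d_1(v_0^kR^4y_1)=v_0^k v_1R^7y_1=0$ on the $E_1$-page for all $k\geq 1$ (and likewise for $v_0^kR^8R^4y_1$, etc.). Thus the classes $v_1^jv_0^kR^4y_1$ with $k\geq 1$ all survive your $d_1$-analysis, yet nothing in the asserted answer lives in their bidegrees (they sit in weight one, total degree $-6$, while the weight-one part of the answer is the sawtooth on $R^7y_1,R^9y_1,\dots$ in degrees $\leq -9$ together with the free module on $y_1$). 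They cannot be ``stitched in'' by hidden extensions; they must die, and the only way is by higher Bockstein differentials. The paper produces exactly these: combining $d(R^4y_1)=v_1R^7y_1$ with the hidden extension $v_0R^{2k+1}y_1=v_1R^{2k+3}y_1$ gives $d(v_0^kR^4y_1)=v_0^kv_1R^7y_1=v_1^{k+1}R^{7+2k}y_1$, a $d_{k+1}$-differential; these simultaneously kill the spurious $v_0^k$-towers on the even-tail generators and truncate the right-hand edge of each sawtooth (killing $v_1^jR^{7+2k}y_1$ for $j\geq k+1$), which is what makes the quotient $\mb F_2[v_0,v_1]\{R^{2k+1}x\}/(v_0R^{2k+1}x=v_1R^{2k+3}x)$, with $R^5y_1=0$ etc., come out additively correct.

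Your proposed cross-check against the joint $(v_0,v_1)$-spectral sequence is the right tool, but it proves the opposite of what you assert: there, $E_1$ is free over $\mb F_2[v_0,v_1]$ on $\Ext_{E(-1)R\Mod}$, so $d_1(v_0^kR^4y_1)=v_0^kv_1R^7y_1$ is nonzero already on $E_1$, and translating back through the hidden extensions this is precisely the family of higher $v_1$-Bockstein differentials above. Replace the degeneration claim with this family and your argument becomes the paper's. (A smaller wording issue: the single-$R$ classes $R^ay_1$ with $a$ odd are not $(v_0,v_1)$-torsion and do not belong to the ``same family'' as the type 2 summands; they are the generators of the first sawtooth, where your later paragraph does in fact place them.)
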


The first sawtooth is pictured in Figure~\ref{fig:pos1}. It has
generators $R^7 y_1$, $R^9 y_1$, $R^{11} y_1$, and so on, with
$v_0 R^7 y_1 = v_1 R^9 y_1$, etc, mod the relation
$v_1 R^7 y_1 = 0$. The second sawtooth has generators $R^{11} R^4
y_1$, $R^{13} R^4 y_1$, and so on.
\begin{figure}
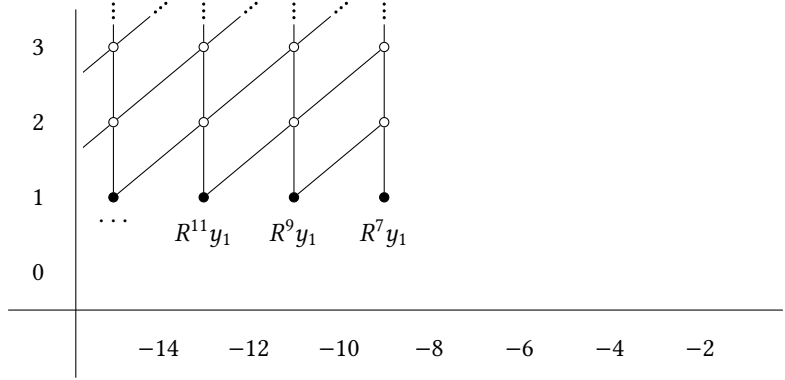

  \centering
\begin{sseqdata}[name=pos1, Adams grading, x tick step = 2, xscale =
    0.6, x range={-15}{-1}, y range={0}{3}]
    \class["R^7 y_1" below, fill](-9,1)
    \class["R^9 y_1" below, fill](-11,1) 
    \class["R^{11} y_1" below, fill](-13,1)
    \class["\dots" below, fill](-15,1) 
    \class[fill](-17,1) 
    \foreach \x in {-9,-11,-13,-15,-17} \foreach \y in {2, 3, 4} {
      \class(\x,\y)
    } 
    \foreach \x in {-9,-11,-13,-15} \foreach \y in {1, 2, 3} {
      \structline(\x,\y)(\x,\y+1)
    } 
    \foreach \x in {-11,-13,-15,-17} \foreach \y in {1, 2, 3} {
      \structline(\x,\y)(\x+2,\y+1)
    }
  \end{sseqdata}
  \printpage[name = pos1, page = 1]
  
  \caption{Part of the first sawtooth pattern in $\Ext$ for $n=1$}
  \label{fig:pos1}
\end{figure}

\begin{proof}[Proof sketch]
  The first observation is that if $R^{a_1} R^{a_2} \dots R^{a_m}$ is
  an admissible monomial and $a_1$ and $a_2$ are odd, then
  $R^{a_1 - 2} R^{a_2-1} \dots R^{a_m}$ is also an admissible
  monomial. Therefore, every admissible monomial starting with two
  \emph{odd} terms has a canonical lower admissible monomial with a
  differential that makes it $v_1$-torsion.

  Almost all of the admissible monomials
  $R^{a_1} R^{a_2} \dots R^{a_m}$ where $a_1$ is odd and $a_2$ is
  even, by contrast, support differentials and do not survive the
  spectral sequence. The only exception is when the admissible
  monomial $R^{a_1+2} R^{a_2+1} \dots R^{a_m}$ is already zero, and this
  occurs only when it ends with one of the sequences $R^5$, $R^9 R^4$,
  $R^{17} R^8 R^4$, and so on. In this case, there are two
  possibilities: either $R^{a_1} R^{a_2} \dots R^{a_m}$ \emph{also}
  ends with this sequence and it was already the zero monomial, or it
  is one of the monomials $R^a$, $R^a R^4$, $R^a R^8 R^4$, $R^{16} R^8
  R^4$, and so on, with $a$ odd.

  The differentials in the Koszul complex imply
  $d(R^4 y_1) = v_1 R^7 y_1$, $d(R^8 R^4 y_1) = v_1 R^{11} R^4 y_1$,
  and so on. Therefore, in the Bockstein spectral sequence the
  elements $v_0^k R^4 y_1$ support differentials to the hidden
  extensions $v_0^k v_1 R^7 y_1 = v_1^{k+1} R^{7 + 2k} y_1$, the
  elements $v_0^k R^8 R^4 y_1$ support differentials to the hidden
  extensions $v_1^{k+1} R^{11+2k} R^4 y_1$, and so on. These impose
  the ``right-hand edge'' of the sawtooth patterns.
\end{proof}

\section{Further calculations: $n=2$}

The calculation for $n=2$ is carried out in a very similar fashion to
the calculation for $n=1$. Again, there are two spectral sequences
calculating $\Ext_{E(2)R\Mod}(QH_* BP, \mb F_2)$. The first is the
Bockstein spectral sequence
\[
  \Ext_{E(1)R\Mod}(QH_* BP, \mb F_2) \otimes \mb F_2[v_2] \Rightarrow
  \Ext_{E(2)R\Mod}(QH_* BP, \mb F_2),
\]
and the second is the spectral sequence
\[
  \Ext_{E(-1)R\Mod}(QH_* BP, \mb F_2) \otimes \mb F_2[v_0, v_1, v_2] \Rightarrow
  \Ext_{E(2)R\Mod}(QH_* BP, \mb F_2)
\]
arising from filtering the Koszul complex. The second has a basis of
admissible monomials
\[
  v_0^{j} v_1^{j} v_2^{l} R^{a_1} R^{a_2} \dots R^{a_m} y_1
\]
(except for those which are right multiples of $R^5$) with
$\mb F_2[v_0, v_1, v_2]$-linear differential satisfying the following
relations:
\begin{align*}
  dR^a(x) &= (a+1) (v_0 R^{a+1} x + v_1 R^{a+3} x + v_2 R^{a+7} x) + R^a(dx)\\
  R^a(v_2 x) &= 0\\
  R^a(v_1 x) &= v_2 R^{a+4} x\\
  R^a(v_0 x) &= v_2 R^{a+6} x + v_1 R^{a+2} x
\end{align*}
We can again use these differentials to determine hidden extensions
and Bockstein differentials on the basis calculated in the previous
section. We begin with some hidden extensions.
\begin{lem}
\label{lem:v2extension}
  In the $v_2$-Bockstein spectral sequence, when $x$ is a cycle there
  are hidden extensions
  \[
    v_0 R^{a_1} R^{a_2} x = v_2 R^{a_1 + 4} R^{a_2+2} x
  \]
  whenever $R^{a_1} R^{a_2} x$ is admissible and $a_1$ and $a_2$ are odd.
\end{lem}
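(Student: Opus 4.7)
The plan is to mimic the proof of the preceding $v_1$-extension lemma, now in the filtered Koszul complex involving all of $v_0$, $v_1$, and $v_2$. I want to exhibit an explicit chain $z$ with Koszul differential exactly $v_0 R^{a_1} R^{a_2} x + v_2 R^{a_1+4} R^{a_2+2} x$, which will directly witness the hidden extension on passage to $\Ext_{E(2)R\Mod}$.

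To begin, I check admissibility: since $a_1, a_2$ are odd with $a_1 \geq 2a_2$, the inequality must be strict (different parities), so $a_1 - 1 \geq 2 a_2$ and $a_1 \geq 2(a_2 - 1)$, giving admissibility of both $R^{a_1-1} R^{a_2} x$ and $R^{a_1} R^{a_2-1} x$. The main calculation then applies the stated Koszul differential $dR^a(y) = (a+1)\sum_k v_k R^{a+2^k-1}(y) + R^a(dy)$ together with the commutation rules $R^a(v_0 y) = v_1 R^{a+2} y + v_2 R^{a+6} y$, $R^a(v_1 y) = v_2 R^{a+4} y$, and $R^a(v_2 y) = 0$, using that $dx = 0$ and being careful with parities so that $(a_2+1)$ vanishes mod $2$ while $a_1$ and $a_2$ themselves do not. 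The results will be
\begin{align*}
  d(R^{a_1-1} R^{a_2} x) &= v_0 R^{a_1} R^{a_2} x + v_1 R^{a_1+2} R^{a_2} x + v_2 R^{a_1+6} R^{a_2} x,\\
  d(R^{a_1} R^{a_2-1} x) &= v_1 R^{a_1+2} R^{a_2} x + v_2 R^{a_1+6} R^{a_2} x + v_2 R^{a_1+4} R^{a_2+2} x.
\end{align*}
Adding these over $\mb F_2$, the $v_1 R^{a_1+2} R^{a_2} x$ and $v_2 R^{a_1+6} R^{a_2} x$ terms cancel in pairs, so that $z = R^{a_1-1} R^{a_2} x + R^{a_1} R^{a_2-1} x$ satisfies $dz = v_0 R^{a_1} R^{a_2} x + v_2 R^{a_1+4} R^{a_2+2} x$ as required.

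The main obstacle is really just bookkeeping: getting every parity of Koszul coefficients right and applying each commutation rule in the correct slot. The overall cancellation is nevertheless essentially forced, since the only way to produce a $v_0 R^{a_1} R^{a_2} x$ term in a Koszul differential is by differentiating the leading even-index $R^{a_1-1}$, and the resulting $v_1$ and $v_2$ junk terms are precisely those that the companion chain $R^{a_1} R^{a_2-1} x$ also produces when commuting $v_0, v_1$ past $R^{a_1}$. That same commutation is what generates the desired $v_2 R^{a_1+4} R^{a_2+2} x$, so no additional spectral-sequence input is needed beyond this one differential computation.
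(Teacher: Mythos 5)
Your proposal is correct and is essentially the paper's own proof: the paper likewise exhibits the single chain $R^{a_1-1}R^{a_2}x + R^{a_1}R^{a_2-1}x$ whose Koszul differential is $v_0 R^{a_1}R^{a_2}x + v_2 R^{a_1+4}R^{a_2+2}x$, and you have simply written out the two term-by-term differentials (using the $n=2$ rules $dR^a = (a+1)(v_0R^{a+1}+v_1R^{a+3}+v_2R^{a+7})+R^a d$ and the commutation relations, which is what the paper uses in this section despite your quoting the general formula) together with the admissibility check. Nothing further is needed.
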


\begin{proof}
  The differential
  \[
    d(R^{a_1-1} R^{a_2} x + R^{a_1} R^{a_2-1} x) = v_0 R^{a_1} R^{a_2}
    x + v_2 R^{a_1 + 4} R^{a_2+2} x,
  \]
  for $x$ a cycle in the filtered Koszul complex, establishes this
  relation.
\end{proof}

We now calculate differentials. The first type are differentials on
admissible monomials that start with two odd operations:
\[
  d(R^{a_1} R^{a_2} R^{a_3} x) =
  \begin{cases}
    0 &\text{if $a_1$, $a_2$, and $a_3$ are odd}\\
    v_2 R^{a_1 + 4} R^{a_2+2} R^{a_3+1} x &\text{if $a_1$, $a_2$ are odd and
    $a_3$ is even}
  \end{cases}
\]
As before, this makes all admissible monomials of the form $R^{a_1}
R^{a_2} R^{a_3} \dots R^{a_m} x$, where $a_1$, $a_2$, and $a_3$ are
odd, into $v_2$-torsion elements. These are systematically eliminated
by the admissible monomials of the form $R^{a_1} R^{a_2} R^{a_3} \dots
R^{a_m} x$, where $a_1$ and $a_2$ are odd but $a_3$ is even,
\emph{except} for those among the following list:
\begin{align*}
  &R^{a_1} R^{a_2} R^4 y_1,\\
  &R^{a_1} R^{a_2} R^8 R^4 y_1,\\
  &R^{a_1} R^{a_2} R^{16} R^8 R^4 y_1, \dots
\end{align*}

The second type are differentials on classes that are part of the
``sawtooth'' patterns. For $a$ odd, these differentials take the
following form:
\begin{align*}
  d(R^a y_1) &= 0,\\
  d(R^a R^4 y_1) &= v_2 R^{a+4} R^7 y_1,\\
  d(R^a R^8 R^4 y_1) &= v_2 R^{a+4} R^{11} R^4 y_1,\\
  d(R^a R^{16} R^8 R^4 y_1) &= v_2 R^{a+4} R^{19} R^8 R^4 y_1, \dots
\end{align*}
These differentials in the Bockstein spectral sequence hit
$v_0$-torsion elements. However, the hidden extension
\[
  v_0 R^{a_1} R^{a_2} x = v_2 R^{a_1 + 4} R^{a_2 + 2} x
\]
from Lemma~\ref{lem:v2extension} forces higher differentials:
\begin{align*}
  d(v_0^k R^a y_1) &= 0,\\
  d(v_0^k R^a R^4 y_1) &= v_2^{k+1} R^{a+4k + 4} R^{7+2k} y_1,\\
  d(v_0^k R^a R^8 R^4 y_1) &= v_2^{k+1} R^{a+4k + 4} R^{11 + 2k} R^4 y_1,\\
  d(v_0^k R^a R^{16} R^8 R^4 y_1) &= v_2 R^{a+4k + 4} R^{19 + 2k} R^8 R^4 y_1, \dots
\end{align*}
and so on. The targets span \emph{all} of the admissible monomials
which we previously described as being in the kernel of the differential.

Putting this calculation together, we are led to the following
conclusion.
\begin{prop}
  The $\Ext$-groups
  \[
    \Ext_{E(2)R\Mod}(QH_* BP, \mb F_2)
  \]
  are a direct sum of copies of four types of terms:
  \begin{enumerate}
  \item a free copy of $\mb F_2[v_0, v_1, v_2]$ generated by $y_1$,
  \item a $(v_0,v_1)$ sawtooth pattern generated by the classes $R^a
    y_1$ with $a$ odd, $a \geq 7$,
  \item $(v_0,v_1,v_2)$-torsion copies of $\mb F_2$ indexed by those admissible
    monomials $R^{a_1} R^{a_2} \dots R^{a_m} y_1$ with $a_1$, $a_2$,
    and $a_3$
    odd which do not end in any of the sequences $R^5 y_1$, $R^9 R^4 y_1$,
    $R^{17} R^8 R^4 y_1$, and so on, and
  \item $(v_0, v_2)$ sawtooth patterns
    \[
      \mb F_2[v_0,v_2] \{R^a R^b x\} / (v_0 R^a R^b x = v_2R^{a+4}
      R^{b+2} x)
    \]
    as $R^a R^b x$ range over admissible monomials with $a$ and $b$
    odd and $x$ in the set $y_1, R^4 y_1, R^8 R^4 y_1, \dots$, mod
    the relations that $R^5 y_1 = 0$, $R^9 R^4 y_1 = 0$, and so on.
  \end{enumerate}
\end{prop}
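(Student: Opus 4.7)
The plan is to run the $v_2$-Bockstein spectral sequence
\[
  \Ext_{E(1)R\Mod}(QH_* BP, \mb F_2) \otimes \mb F_2[v_2] \Rightarrow \Ext_{E(2)R\Mod}(QH_* BP, \mb F_2)
\]
starting from the three-part $n=1$ decomposition in the previous section, and to extract all $d_r$-differentials and hidden $v_0$-extensions from the second (filtered) spectral sequence using the explicit differential rules and Lemma~\ref{lem:v2extension} already displayed above the statement.

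First I would dispose of the easy pieces. The free $\mb F_2[v_0,v_1]$-tower on $y_1$ has $dy_1=0$ and nothing maps to it, so it becomes a free $\mb F_2[v_0,v_1,v_2]$-tower, giving part~(1). For the first $n=1$ sawtooth with generators $R^a y_1$ ($a \geq 7$ odd), the direct $v_2$-differential $(a+1)v_2 R^{a+7}y_1$ vanishes since $a+1$ is even, so the entire sawtooth survives intact as part~(2). For the $(v_0,v_1)$-torsion classes $R^{a_1}R^{a_2}\cdots R^{a_m} y_1$ with $a_1,a_2$ odd, I would split on the parity of $a_3$ (if $m \geq 3$): when $a_3$ is also odd the $v_2$-differential vanishes and the class is $v_2$-permanent and unhit, supplying part~(3); when $a_3$ is even, the listed formula
\[
  d(R^{a_1}R^{a_2}R^{a_3}\cdots) = v_2 R^{a_1+4}R^{a_2+2}R^{a_3+1}\cdots
\]
pairs the class bijectively with an (odd,odd,odd)-prefix class sharing the same tail and annihilates both, \emph{except} when $R^{a_3+1}\cdots R^{a_m} y_1$ lies in the dead list $\{R^5y_1, R^9R^4y_1, R^{17}R^8R^4y_1,\ldots\}$.

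The remaining work combines these non-cancelled exceptions (classes of the form $R^{a_1}R^{a_2} x$ with $a_1, a_2$ odd and $x \in \{y_1, R^4 y_1, R^8 R^4 y_1, \ldots\}$, modulo the dead relations) with the higher $n=1$ sawtooths generated by $R^a R^4 y_1, R^a R^8 R^4 y_1, \ldots$ with $a$ odd. For the latter the listed differentials $d(R^a R^4 y_1) = v_2 R^{a+4} R^7 y_1$ and their analogues for longer tails are nonzero, and the hidden extension of Lemma~\ref{lem:v2extension} promotes them to higher Bockstein differentials $d(v_0^k R^a R^4 y_1) = v_2^{k+1} R^{a+4k+4} R^{7+2k} y_1$ and their analogues. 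These systematically kill the $v_2$-positive portion of each $\mb F_2[v_2]$-tower above an exception class, leaving only the $v_2=0$ floor. Re-assembling the surviving floors together with the hidden extension $v_0 R^a R^b x = v_2 R^{a+4} R^{b+2} x$ then produces exactly the $(v_0,v_2)$-sawtooth patterns of part~(4).

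The main obstacle is the combinatorial bookkeeping in this final step: one must verify that the higher Bockstein differentials originating in the $n=1$ sawtooth sources hit precisely the $v_2$-positive towers above the non-cancelled exceptions — no more and no less — so that no type~(3) class is inadvertently hit and every $v_2$-multiple of an exception class is accounted for. This amounts to checking that the admissibility-preserving shift $(a,b)\mapsto (a+4k+4,b+2k)$ interacts with the dead list $\{R^{2^{i+1}}R^{2^i}\cdots R^4 y_1\}$ in exactly the right inductive way. Once that pairing is in place, the four families in the statement are read off directly.
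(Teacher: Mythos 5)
Your proposal is correct and follows essentially the same route as the paper: the paper gives no separate proof environment for this proposition, but its preceding discussion is exactly your argument --- run the $v_2$-Bockstein spectral sequence off the $n=1$ answer, import differentials and the hidden extension $v_0 R^{a_1}R^{a_2}x = v_2 R^{a_1+4}R^{a_2+2}x$ from the filtered Koszul complex (Lemma~\ref{lem:v2extension}), note that three-odd-prefix monomials become $v_2$-torsion and are cancelled against (odd, odd, even) sources except for the exceptional tails, and use the promoted differentials $d(v_0^k R^a R^4 y_1) = v_2^{k+1} R^{a+4k+4} R^{7+2k} y_1$ (and their longer-tail analogues) to truncate the remaining towers into the $(v_0,v_2)$ sawtooths. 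The residual bookkeeping you flag is glossed at the same level in the paper itself (``the targets span all of the admissible monomials \dots in the kernel''), so your write-up matches the paper's level of rigor.
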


The first $(v_0,v_2)$ sawtooth pattern appears in
Figure~\ref{fig:pos2}.
\begin{figure}
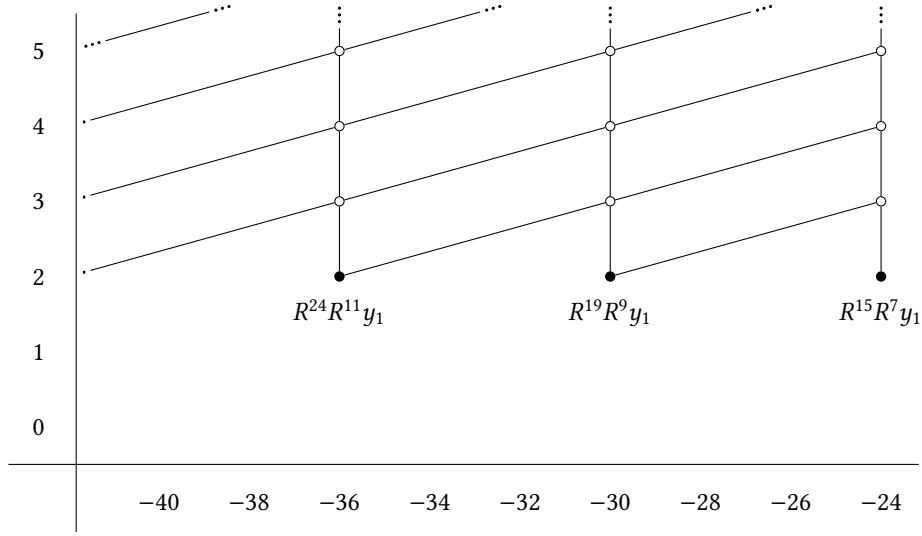

  \centering
\begin{sseqdata}[name=pos2, Adams grading, x tick step = 2, xscale =
    0.6, x range={-41}{-24}, y range={0}{5}]
    \class["R^{15} R^7 y_1" below, fill](-24,2)
    \class["R^{19} R^9 y_1" below, fill](-30,2) 
    \class["R^{24} R^{11} y_1" below, fill](-36,2)
    \class[fill](-42,2) 
    \foreach \x in {-24,-30,-36,-42} \foreach \y in {3, 4, 5, 6} {
      \class(\x,\y)
    } 
    \foreach \x in {-24,-30,-36} \foreach \y in {2, 3, 4, 5} {
      \structline(\x,\y)(\x,\y+1)
    } 
    \foreach \x in {-30,-36,-42} \foreach \y in {2, 3, 4, 5} {
      \structline(\x,\y)(\x+6,\y+1)
    }
  \end{sseqdata}
  \printpage[name = pos2, page = 1]
  
  \caption{Part of the first $(v_0,v_2)$ sawtooth pattern in $\Ext$ for $n=2$}
  \label{fig:pos2}
\end{figure}

\begin{rmk}
  The $v_1$-multiplication on the $(v_0, v_2)$-sawtooth patterns is
  more complicated and we will not describe it here.
\end{rmk}

\section{Final calculations in weight $2$}

The calculations we have been doing can be carried out for larger and
larger values of $n$ and obey systematic patterns, but require more
and more bookkeeping with respect to the edge cases. However, we will
ultimately be interested in weight $2$. Here, we have already isolated
everything: we have determined all the permanent cycles and their
quotient by the differentials on classes in lower weight.
\begin{prop}
  The part of
  \[
    Ext_{E(2)R\Mod}(QH_* BP, \mb F_2)
  \]
  on classes in weight less than or equal to $2$ is a sum of three
  terms:
  \begin{enumerate}
  \item in weight $0$, there is a copy of $\mb F_2[v_0, v_1, v_2]$
    generated by $y_1$;
  \item in weight $1$, there is a $(v_0, v_1)$-sawtooth pattern
    generated by the classes $R^a y_1$ for $a \geq 7$, $a$ odd, with
    $v_0 R^a y_1 = v_1 R^{a+2} y_1$;
  \item in weight $2$, there are $(v_0, v_2)$-sawtooth patterns
    generated by the classes $R^a R^b y_1$ for $b \geq 7$, $a > 2b$,
    $a$ and $b$ odd, with $v_0 R^a R^b y_1 = v_2 R^{a+4} R^{b+2}
    y_1$.\footnote{There is one sawtooth pattern for each odd positive
      value of $a-2b$.}
  \end{enumerate}
  In particular, these all lift to permanent cycles in the Koszul
  complex for $\Ext_{E(n)R\Mod}(QH_* BP, \mb F_2)$.
\end{prop}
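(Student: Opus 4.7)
The plan is to harvest the result of the previous section and restrict to weight $\leq 2$. By the corollary that the Koszul differential preserves weight on classes ending in $y_1$, the weight-$m$ stratum of the Koszul complex is itself a subcomplex whose cohomology embeds into $\Ext_{E(2)R\Mod}(QH_* BP, \mb F_2)$. So I would read off each of the three claimed summands from the corresponding weight strata of the $n=2$ decomposition established in the previous section.

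First, for weight $0$ the only admissible monomials are $v_0^{k_0} v_1^{k_1} v_2^{k_2} y_1$, and $dy_1 = 0$, so this stratum contributes the polynomial algebra $\mb F_2[v_0, v_1, v_2]$. For weight $1$, the stratum is precisely the $(v_0, v_1)$-sawtooth identified during the $n = 1$ analysis; since every differential entering or leaving weight $1$ in the $v_2$-Bockstein has target or source in a weight already analyzed, this sawtooth persists at $n = 2$. For weight $2$, I would extract the $x = y_1$ pieces of the $(v_0, v_2)$-sawtooth patterns from the $n=2$ decomposition; the $x \in \{R^4 y_1, R^8 R^4 y_1, \dots\}$ pieces lie in weight $\geq 3$ and do not contribute. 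Admissibility together with the relation $R^5 y_1 = 0$ established at $n=-1$ forces the weight-$2$ generators $R^a R^b y_1$ to have $b \geq 7$.

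For the final assertion that these classes lift to permanent cycles in the Koszul complex for every $n \geq 2$, each listed class has only odd exponents on the $R$-operators, so the differential
\[
  d R^a(x) = (a+1) \sum_{0 \leq k \leq n} v_k R^{a+2^k-1}(x) + R^a(dx)
\]
contributes $0 \bmod 2$; combined with $dy_1 = 0$ and the compatibility of $d$ with multiplication by the $v_i$, this shows that the representatives are Koszul cocycles for every $n$. Nontriviality in cohomology follows from the same weight-filtration argument applied with the larger $n$-tower of Bocksteins in place of the $n = 2$ one.

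The main bookkeeping is organizing the sawtooth relations so that no weight-$\leq 2$ class is inadvertently absorbed into a boundary or pushed into a higher-weight identification. The hidden extension of Lemma~\ref{lem:v2extension}, together with the corresponding $n=1$ hidden extension underlying the $(v_0, v_1)$-sawtooth, supplies every identification we need; once those are in hand the restriction from the full $n=2$ decomposition to weight $\leq 2$ is essentially mechanical.
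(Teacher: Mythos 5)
Your proposal is correct and takes essentially the paper's route: the proposition is simply the weight-$\leq 2$ harvest of the $n=2$ computation (whose remaining terms, namely those with three leading $R$'s or with $x \neq y_1$, all have weight at least $3$), combined with the earlier observation that admissible monomials ending in $y_1$ with all $R$-exponents odd are literally killed by the Koszul differential for every $n$, which gives the permanent-cycle claim. One caveat: your opening assertion that the weight-$m$ stratum is a subcomplex whose cohomology embeds into $\Ext_{E(2)R\Mod}(QH_* BP,\mb F_2)$ is false as stated --- for instance $R^5 y_1$ is a weight-$1$ cocycle ending in $y_1$ but is the boundary of the weight-$0$ class $y_2$ --- though nothing in your argument actually depends on it, since the $n=2$ decomposition you quote already has the relations $R^5 y_1 = 0$, $R^9 R^4 y_1 = 0$, \dots{} built in.
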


In particular, the fact that we have permanent cycles implies that all
the higher Bockstein spectral sequences degenerate in weights less
than or equal to two.
\begin{cor}
  For any $n \geq 2$, the Bockstein spectral sequences
  \[
    Ext_{E(2)R\Mod}(QH_* BP, \mb F_2) \otimes \mb F_2[v_3,\dots,v_n]
    \Rightarrow Ext_{E(n)R\Mod}(QH_* BP, \mb F_2)
  \]
  degenerate in weights less than or equal to two.
\end{cor}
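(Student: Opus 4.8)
The plan is to read the statement off from the structural proposition just proved together with two linearity-type properties of the Koszul differential. Recall that the displayed spectral sequence arises by filtering the $E(n)$-Koszul complex $C^*(QH_* BP)$ by degree in the operators $v_3,\dots,v_n$, so that its $E_1$-page is $\Ext_{E(2)R\Mod}(QH_* BP,\mb F_2)\otimes\mb F_2[v_3,\dots,v_n]$; since the $v_i$ carry no $R$-operators, the weight grading (degree in $R$) on this page is carried entirely by the $\Ext_{E(2)R\Mod}$-factor, and its part in weight $\le 2$ is, by the preceding proposition, the $\mb F_2[v_0,\dots,v_n]$-submodule generated by the finitely many families of classes $y_1$, $R^a y_1$, and $R^a R^b y_1$ listed there.

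First I would record that the Koszul differential $d$ on $C^*(QH_* BP)$ is $\mb F_2[v_0,\dots,v_n]$-linear, directly from the relation $d(v_i x)=v_i\,dx$; consequently every differential $d_r$ of the spectral sequence is $\mb F_2[v_0,\dots,v_n]$-linear as well. To kill all differentials out of the weight-$\le 2$ part it therefore suffices to kill them on the module generators above. But each of those generators is, as an element of $C^*(QH_* BP)$, a monomial $R^{a_1}\cdots R^{a_m}y_1$ with $m\le 2$ whose $R$-exponents are all odd, hence a genuine cycle by the corollary on permanent cycles (this is exactly the last clause of the preceding proposition). Thus $d_r$ vanishes on every module generator, and by linearity on the whole weight-$\le 2$ part of $E_r$: there are no differentials emanating from weight $\le 2$.

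Next I would rule out differentials landing in weight $\le 2$. By the corollary on the weight filtration, $d$ is non-decreasing in weight, and the same holds for the induced differential on each page $E_r$; so any $d_r$ whose target has a nonzero component of weight $\le 2$ must have source of weight $\le 2$. By the previous paragraph there are none. Combining the two, the weight-$\le 2$ part of $E_1$ maps isomorphically onto that of $E_\infty$, which is precisely the asserted degeneration (the case $n=2$ being vacuous and all larger $n$ handled uniformly).

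As this is really a consequence of the structural proposition, there is no substantial obstacle; the only point needing mild care is the weight bookkeeping in the last step, namely that ``non-decreasing in weight'' survives passage from $C^*(QH_* BP)$ through the $v_3,\dots,v_n$-filtration to each $E_r$. This is routine, since the $v_i$ have weight zero and the successive subquotients of the filtration are again Koszul-type complexes with the same differential formula, so the weight filtration they inherit is compatible with the one on $C^*(QH_* BP)$.
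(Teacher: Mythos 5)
Your argument is correct and is essentially the paper's own: the weight $\le 2$ part of the $E_1$-page is generated over $\mb F_2[v_0,\dots,v_n]$ by monomials with odd $R$-exponents on $y_1$, which are permanent cycles in the full $E(n)$-Koszul complex, and the $v$-linearity plus weight-monotonicity of the differential then forces degeneration in that range. The paper compresses exactly this into the remark that the weight $\le 2$ classes lift to permanent cycles, so your write-up just makes the implicit bookkeeping explicit.
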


Since filtration is greater than or equal to weight, this gives a
complete list of classes in $\Ext$-filtration less than or equal to
three except for classes of the form $R^a R^b R^c x$, all of which are
in large negative degree.

\bibliography{../masterbib}
\end{document}